\newtheorem{thm}{Theorem}
\newtheorem{lem}[thm]{Lemma}
\newtheorem{prop}[thm]{Proposition}
\theoremstyle{remark}
\newtheorem{rmk}[thm]{Remark}
\newtheorem{example}[thm]{Example}
\theoremstyle{definition}
\newtheorem{defi}[thm]{Definition}
\numberwithin{thm}{section} 
\numberwithin{equation}{section}
\newcommand{\Rmnum}[1]{\expandafter\@slowromancap\romannumeral #1@}
\def\A{{\mathcal A}}
\def\R{{\mathbb R}}
\def\X{{\mathbf X}}
\def\Y{{\mathbf Y}}
\newcommand{\vep}{\varepsilon}
\newcommand{\ol}{\overline}
\newcommand{\bpm}{\begin{pmatrix}}
\newcommand{\epm}{\end{pmatrix}}
\newcommand{\la}{\langle}
\newcommand{\ra}{\rangle}
\newcommand{\beq}{\begin{equation}}
\newcommand{\eeq}{\end{equation}}
\def\lip{{\rm Lip\,}}
\def\lipl{{\rm Lip_{loc}\,}}
\title[Monge solutions of time-dependent Hamilton-Jacobi equations]{Monge solutions of time-dependent Hamilton-Jacobi equations in metric spaces}
\author[Q.~Liu]{Qing Liu}
\address[Qing Liu]{Geometric Partial Differential Equations Unit, Okinawa Institute of Science and Technology Graduate University, Okinawa 904-0495, Japan}
\email{qing.liu@oist.jp}
\author[M.~B.~P.~Wiranata]{Made Benny Prasetya Wiranata}
\address[Made Benny Prasetya Wiranata]{Geometric Partial Differential Equations Unit, Okinawa Institute of Science and Technology Graduate University, Okinawa 904-0495, Japan, and \newline
Faculty of Mathematics and Natural Sciences, Universitas Gadjah Mada, Indonesia}
\email{made.wiranata@oist.jp}
\date{\today}
\begin{document}

\begin{abstract}
 As a classical notion equivalent to viscosity solutions, Monge solutions are well understood for stationary Hamilton-Jacobi equations in Euclidean spaces and have been recently studied in general metric spaces. In this paper, we introduce a notion of Monge solutions for time-dependent Hamilton-Jacobi equations in metric spaces. 
The key idea is to reformulate the equation as a stationary problem under the assumption of Lipschitz regularity for the initial data. We establish the uniqueness and existence of bounded Lipschitz Monge solutions to the initial value problem and discuss their equivalence with existing notions of metric viscosity solutions. 

\end{abstract}

\subjclass[2020]{35R15, 49L25, 35F30, 35D40}
\keywords{Hamilton-Jacobi equations, time-dependent eikonal equation, metric spaces, viscosity solutions, Monge solutions}

\maketitle



\section{Introduction}
\subsection{Background and motivation}
In recent years, there have been remarkable developments in the study of first-order Hamilton-Jacobi equations in general metric spaces, driven by various important applications in optimal transport \cite{AGS13, Vbook}, mean field games \cite{CaNotes}, traffic flow and networks \cite{IMZ,  ACCT,  IMo1, IMo2} etc. In the Euclidean space, it is well known that the viscosity solution theory provides a general framework for solving Hamilton-Jacobi equations, including the typical form 
\[
\partial_t u+H(x, \nabla u)=0;
\]
see, for example, \cite{CIL, BCBook, KoBook, TrBook} for comprehensive introductions. We also refer to \cite{MS, Dr} for extensions of the viscosity solution approach to Hamilton-Jacobi equations on sub-Riemannian manifolds. By contrast, the study of fully nonlinear first-order PDEs in a general metric space $(\X, d)$ faces significant challenges, primarily due to the lack of a linear structure in $\X$ and the difficulty of defining $\nabla u$, the gradient of the unknown function. Note that while $C^1$ functions are used as test functions to define viscosity solutions in the Euclidean space, such a test class is no longer available in general metric spaces. 

Notable progress has been made in the case when the Hamiltonian $H$ depends on $\nabla u$ in terms of its norm $|\nabla u|$. Two types of metric viscosity solutions are introduced in \cite{GHN, Na1} and in \cite{AF, GaS2, GaS} to handle such equations. Here, let us briefly outline these approaches in the special case of the eikonal equation 
\begin{equation}\label{eikonal}
|\nabla u|=f(x) \quad \text{in $\Omega$,}
\end{equation}
where $\Omega\subset \X$ is a domain and $f$ is continuous in $\Omega$. 
The notion of viscosity solutions proposed by \cite{GHN}, which we call curve-based viscosity solutions in this work, reduces \eqref{eikonal} to a one-dimensional problem by using optimal control interpretations along absolutely continuous curves in $\X$. Roughly speaking, using the composition $w_\gamma=u\circ \gamma$ with arc-length parametrized curves $\gamma$ in $\X$, one can interpret $|\nabla u|(x)$ as the maximum value of the derivative $w_\gamma'(0)$ if $\gamma(0)=x$. This perspective allows us to define viscosity solutions of \eqref{eikonal} in general metric spaces as in \cite{GHN} by adopting tests for $w_\gamma$ in one dimension rather than $u$ itself. 

Another notion of solutions, referred to as slope-based viscosity solutions in this paper, is introduced in \cite{AF, GaS2, GaS}, generalizing the Euclidean viscosity solution theory in a more direct manner. The key idea is to employ $\psi(x)=\kappa d(x, x_0)^2$, for any fixed $x_0\in \X$,  as the base of test functions for subsolutions (resp., supersolutions) with $\kappa\geq 0$ (resp., $\kappa \leq 0$); see \cite{GaS2, GaS} and Definition \ref{def s-sol} below for a more precise definition of test functions. Interpreting $|\nabla \psi|$ as the (local) slope of $\psi$, defined by 
\[
|\nabla \psi|(x)= \limsup_{y\to x} \frac{|\psi(x)-\psi(y)|}{d(x, y)}
\]
for $x\in \Omega$, we see that $|\nabla \psi|=2|\kappa|d(\cdot, x_0)$ is continuous in $\X$. This substitute of $C^1$ test class turns out to be sufficient to establish uniqueness and existence of viscosity solutions to a general class of Hamilton-Jacobi equations including \eqref{eikonal} in a complete length space. Recall that $(\X, d)$ is called a length space if for all $x, y\in \X$, $d(x, y)$ coincides with the infimum of $\ell(\gamma)$ of all rectifiable curves $\gamma$ in $\X$ joining $x, y$, where $\ell(\gamma)$ denotes the length of $\gamma$. Consult \cite{NN, Mak, MaNa} for stability results and \cite{LNa} for convexity-preserving properties about slope-based viscosity solutions. 

While the aforementioned notions of viscosity solutions appear different, they are proved in \cite{LShZ} to be equivalent for a class of stationary problems such as \eqref{eikonal} in a complete length space. The equivalence is established through a third notion of solutions known as Monge solutions, 
which offers a convenient definition that does not rely on test functions. Defining Monge solutions of the eikonal equation \eqref{eikonal} is particularly simple. Let $\Omega$ be a domain in a complete length space $(\X, d)$. Denote by $\lipl(\Omega)$ the set of all locally Lipschitz functions. A function $u\in \lipl(\Omega)$ is said to be a Monge solution if $|\nabla^- u|(x)=f(x)$ at any $x\in \Omega$, where $|\nabla^- u|(x)$ denotes the subslope of $u$ at $x$, defined as 
\[
|\nabla^- u|(x)= \limsup_{y\to x} \frac{\max\{u(x)-u(y), 0\}}{d(x, y)}. 
\]
Utilizing the subslope instead of the full slope is essential, especially when considering the uniqueness problem and the consistency with viscosity solutions. See \cite{DaSa, DaLeSa} for recent applications of subslope in metric determination. 

The notion of Monge solutions is first proposed by \cite{NeSu} as an alternative way to understand viscosity solutions of stationary Hamilton-Jacobi equations in the Euclidean space. Since its definition does not involve the use of test functions, this approach can be extended relatively easily to discontinuous Hamilton-Jacobi equations. We refer to \cite{NeSu, CaSi1, BrDa} for related results in the Euclidean space and to \cite{EGV} for recent progress on sub-Riemannian manifolds. In the context of general metric spaces, discontinuous eikonal equations have been studied recently in \cite{LShZ2} based on the techniques for Monge solutions. See also \cite{LMit1} for an application of the Monge-solution approach to the eigenvalue problem for infinity Laplacian in metric spaces.

The purpose of this paper is to develop the theory of Monge solutions in a complete length space for the time-dependent Hamilton-Jacobi equation of the form
\begin{equation}\label{eq general}
\partial_t u+H(x, t, |\nabla u|)=0 \quad \text{in $\X\times (0, T)$}
\end{equation}
for $T>0$ given, along with a bounded continuous initial value
\begin{equation}\label{initial}
u(x, 0)=u_0(x),\quad \text{$x\in \X$.}
\end{equation} 
To the best of our knowledge, the Monge approach to such time-dependent problems has not been available even in Euclidean spaces. In this work, under suitable assumptions on $H$ including the monotonicity, convexity, and coercivity of $p\mapsto H(x, t, p)$, we introduce a generalized notion of Monge solutions of \eqref{eq general} and establish the uniqueness and existence of Monge solutions to the associated initial value problem. 

Moreover, as in the stationary case, we examine the relation between our Monge solutions and the other notions of solutions to the time-dependent problem. When $H$ is independent of $t$, the existence and uniqueness of slope-based solutions of \eqref{eq general} are established in \cite{AF, GaS2, GaS}, while \cite{Na1} provides well-posedness results for a notion of curve-based solutions that generalizes the stationary counterpart in \cite{GHN}; see  Section~\ref{sec:prelim} for a brief review of these results. 
The equivalence between them, however, seems to remain unaddressed, although it is expected to hold, especially from the control-theoretic perspective. In this work, we give an affirmative answer to this important question, rigorously proving that the curve-based, slope-based, and Monge solutions to the initial value problem for \eqref{eq general} are all equivalent provided that the time-independent $H$ and the initial value satisfy appropriate assumptions that will be specified later. 

It is worth emphasizing that our current work only focuses on Hamilton-Jacobi equations in complete length spaces with Hamiltonians depending on $|\nabla u|$. For more general Hamiltonians that depend on $\nabla u$ instead, additional structures or assumptions on the metric space are often needed. There is vast literature on such general Hamilton-Jacobi equations in settings like networks or Wasserstein spaces. We refer the reader to \cite{GaNT, CCM1, HK, IMZ, IMo1, IMo2, GaMS, JZ} and references therein for developments in these directions.

\subsection{Time-dependent Monge solutions}\label{sec:monge-notion}
Let us explain the heuristics behind our definition of time-dependent Monge solutions. We begin with discussing the simple eikonal-type Hamiltonians, that is, $H(x, t, p)=|p|-f(x, t)$ for a given bounded function $f\in C(\X\times (0, T))$. Here and in the sequel, we fix $T>0$ and assume that $(\X, d)$ is a length space. In this case, the equation reads
\begin{equation}\label{eq special}
    \partial_t u+|\nabla u|=f(x, t) \quad \text{in $\X\times(0,T)$}.
\end{equation}
Our strategy is straightforward: we treat the time and space variables jointly, reformulating \eqref{eq special} as a stationary equation in the product metric space $\Y_T:=\X\times (0, T)$. Heuristically speaking, to adapt the method in \cite{LShZ} for stationary eikonal equation to this new formulation, it is desirable to have a nonnegative term of $\partial_t u$ so that the left hand side of \eqref{eq special} can be rewritten as $|\partial_t u|+|\nabla u|$, which represents a certain gradient norm of $u$ in space-time. To this end, we assume that $u\in S_k(\Y_T)$ for some $k\geq -\inf_{\Y_T}f$, where 
\begin{equation}\label{sk}
\begin{aligned}
S_k(\Y_T)=\big\{u\in \lipl(\Y_T): u(x, t) -u(x, s) &\geq -k(t-s),\\
& \text{for all } x\in \X,  0<s<t<T\big\},
\end{aligned}
\end{equation}
and take
\begin{equation}\label{unknown-change}
v(x, t)=u(x, t)+kt, \quad x\in \X,\ t\in [0, T). 
\end{equation}
This change of unknown formally yields $\partial_t v\geq 0$ and converts \eqref{eq special} to 
\begin{equation}\label{eq special2}
|\partial_t v|+|\nabla v|=f(x, t)+k \quad \text{in $\Y_T$.}
\end{equation}
In light of our treatment for the stationary eikonal equation, the left hand side of \eqref{eq special2} is naturally understood as the subslope of $v$ in $\Y_T$ with metric $\bar{d}$ defined as 
\begin{equation}\label{metric spacetime}
\bar{d}((x_1, t_1), (x_2, t_2)):=  \max\{d(x_1, x_2), |t_1-t_2|\} 
\end{equation}
for $(x_1, t_1), (x_2, t_2)\in \Y_T=\X\times (0, T)$. 
We define Monge solutions of \eqref{eq special} as $u\in S_k(\Y_T)$ such that $v$ given by \eqref{unknown-change} satisfies 
\begin{equation}\label{monge-eikonal-intro}
|D^-v|(x, t)=f(x, t)+k, \quad \text{for all $(x, t)\in \Y_T$, }
\end{equation}
where we take the subslope $|D^-v|$ (in space-time) by 
\begin{equation}\label{def D-0}
    |D^- v|(x, t)= \limsup_{\delta \to 0+}\sup\left\{\frac{\max\{v(x, t)-v(y, s), 0\}}{\delta}: (y,s)\in \Y_T, \ \bar{d}((x, t), (y, s))\leq \delta\right\}. 
\end{equation}
An equivalent expression of $|D^- v|$ under the monotonicity of $v$ in time is given in \eqref{def D-}. Note that no test functions are utilized in this definition of solutions. 
The choice of metric $\bar{d}$ for the product space $\X\times (0, T)$ is consistent with the $L^1$-$L^\infty$ duality in the Euclidean space. Suppose that $v\in C^1(\R^n\times (0, T))$ is nondecreasing in time and $(x, t)\in \R^n\times (0, T)$. Then by Taylor expansion, we have 
\[
v(y, s)-v(x, t)=\partial_t v(x, t)(s-t)+\la\nabla v(x, t), {y-x}\ra+ o(|t-s|+|x-y|)
\]
for all $(y, s)$ near $(x, t)$. Since $\partial_t v(x, t)\geq 0$, it then follows from the H\"older inequality that 
\[
\begin{aligned}
\max\{v(x, t)-v(y, s), 0\}\leq \left(\partial_t v(x, t)+|\nabla v(x, t)|\right) & \max\left\{|t-s|, {|x-y|}\right\}\\
&\quad  + o(|t-s|+|x-y|),
\end{aligned}
\]
and moreover,  
\[
\partial_t v(x, t)+|\nabla v(x, t)|=\limsup_{(y, s)\to (x, t)} \frac{\max\{v(x, t)-v(y, s), 0\}}{\max\left\{|t-s|, {|x-y|}\right\}}. 
\]
The relation above gives an intuitive interpretation for the subslope $|D^-v|(x, t)$ appearing in \eqref{def D-0}. As shown later in Theorem~\ref{thm:into1}, the requirement $u\in S_k(\Y_T)$ imposed earlier can be fulfilled if $u_0$ is assumed to be bounded and Lipschitz in a complete length space $\X$. This justifies the entire structure of the well-posedness argument for Monge solutions. 

It is worth mentioning that the condition \eqref{monge-eikonal-intro} is a property on $u$ itself and does not depend on any particular choice of $k\geq 0$ among those satisfying $u\in S_k(\Y_T)$ and $k+\inf_{\Y_T}\geq 0$. See Section~\ref{sec:eikonal-monge-def} for more discussions about the definition of Monge solutions of \eqref{eq special}.

For time-dependent Hamilton-Jacobi equations other than the eikonal type, our definition of Monge solutions becomes more intricate due to the loss of degree-$1$ homogeneity of $p\mapsto H(x, t, p)$ even when $H$ is independent of $x, t$. A typical example in this case is 
\begin{equation}\label{2-homo-eq0}
\partial_t u+\frac{1}{2}|\nabla u|^2=f(x) \quad \text{in $\Y_T=\X\times (0, T)$, }
\end{equation}
for which the Hamiltonian $H$ is given by $H(x, p)=p^2/2-f(x)$. Assuming that $u\in S_k(\Y_T)$, we still obtain 
\begin{equation}\label{2-homo-eq}
|\partial_t v|+\frac{1}{2}|\nabla v|^2-f(x)=k \quad \text{in $\Y_T$, }
\end{equation}
for $v$ given by \eqref{unknown-change}. Unlike \eqref{eq special2} in the eikonal case, the left-hand side of \eqref{2-homo-eq} can no longer be viewed as a slope, since its homogeneity in $|\partial_t v|$ and in $|\nabla v|$ does not match. However, such a time-dependent problem \eqref{2-homo-eq} can still be treated as a stationary problem in the space-time product space with the left-hand side as a new Hamiltonian $H'(x, \tau, p)=\tau+p^2/2-f(x)$, where the pair $(\tau, p)\in [0, \infty)^2$ serves as the momentum variable. Using the standard Lagrangian formulation, we see that 
\[
H'(x, \tau, p)=\sup_{q\geq 0} \left\{\tau+pq-\frac{q^2}{2}-f(x)\right\},\quad x\in \X, \tau, p\geq 0. 
\]
A formal Taylor expansion, as in the case of the eikonal equation, then suggests defining Monge solutions of \eqref{2-homo-eq0} by requiring 
\begin{equation}\label{monge-intro}
|D_L^- v|(x, t)=k \quad \text{for all $(x, t)\in \Y_T$}
\end{equation}
with $|D_L^- v|$ given as 
\[
|D_L^- v|(x, t)= \limsup_{(y, s)\to (x, t)} \left\{\frac{\max\{v(x, t)-v(y, s), 0\}}{|t-s|} -\frac{d(x, y)^2}{2|t-s|^2}-f(x)\right\}, \quad (x, t)\in \Y_T. 
\]
The subslope-like quantity $|D_L^- v|$ is used to maintain consistency with the corresponding viscosity solutions.

The same strategy can be applied to more general Hamilton-Jacobi equations as in  \eqref{eq general}. Assuming that $p\mapsto H(x, t, p)$ is increasing, convex, and coercive in $[0, \infty)$ with superlinear growth, we define Monge solutions $u$ in the class $S_k(\Y_T)$ for some $k\geq 0$ with the same condition \eqref{monge-intro} on $v$ given by \eqref{unknown-change}. In this general case, $|D_L^- v|$ is defined by
\[
|D_L^- v|(x, t)= \limsup_{(y, s)\to (x, t)} \left\{\frac{\max\{v(x, t)-v(y, s), 0\}}{|t-s|} -L\left(x, t, \frac{d(x, y)}{|t-s|}\right)\right\}, \quad (x, t)\in \Y_T,  
\]
where $L: \X\times (0, T)\times [0, \infty)\to \R$ denotes the Lagrangian associated to $H$, that is, 
\begin{equation}\label{legendre0}
 L(x, t, q)= \sup_{p\geq 0} \{p q- H(x, t, p)\}\quad \text{for all $x\in \X$, $t\in (0, T)$, $q\geq 0$.}
\end{equation}
Under the monotonicity of $t\mapsto v(x, t)$, one can alternatively define $|D_L^- v|$ to be 
\begin{equation}\label{def DL-into}
|D_L^- v|(x, t)= \limsup_{y\to x,\ s\to t-} \left\{\frac{v(x, t)-v(y, s)}{t-s} -L\left(x, t, \frac{d(x, y)}{t-s}\right)\right\}, \quad (x, t)\in \Y_T. 
\end{equation}
Our definition of Monge solutions is also compatible with the classical Hopf-Lax formula (adapted to a metric space $(\X, d)$)
\[
u(x, t)=\inf\left\{u_0(y)+t L\left(\frac{d(x, y)}{t}\right)\right\}, \quad \text{$(x, t)\in \X\times (0, T)$,}
\]
in the special case where the Hamiltonian $H$ and the corresponding Lagrangian $L$ are independent of $x$ and $t$. We refer to \cite[Theorem~7.7]{GaS2} for the Hopf-Lax formula of slope-based viscosity solutions in general geodesic spaces. Since the formula implies 
\[
u(x, t)=\inf_{y\in \X,\ 0\leq s<t}\left\{u(y, s)+(t-s) L\left(\frac{d(x, y)}{t-s}\right)\right\}, 
\]
the function $v$ defined by \eqref{unknown-change} satisfies 
\[
\sup\left\{\frac{v(x, t)-v(y, s)}{t-s}-L\left(\frac{d(x, y)}{t-s}\right): y\in \X,\ 0\leq s<t\right\}=k.
\]
We immediately obtain \eqref{monge-intro} by letting $y\to x, s\to t-$. Hence, \eqref{monge-intro} can be somewhat interpreted as a pointwise infinitesimal formulation of the Hopf-Lax formula, applicable to a broad class of Hamiltonians that also depend on $x$ and $t$.

While Hamilton-Jacobi equations have been extensively studied, especially within the framework of viscosity solutions, to the best of our knowledge, this notion of Monge solutions, treating space and time as a whole, seems to be new even in the Euclidean setting. In this work, we establish well-posedness results for Monge solutions to the initial value problem and discuss the equivalence with other types of solutions under further assumptions on $H$. More details will be given later. 

\subsection{Main results}

To maintain consistency with our definition of Monge solutions, we divide our discussion of the well-posedness of the initial value problem into two cases as well. We study the eikonal-type \eqref{eq special} first and then discuss \eqref{eq general} with $H$ convex, coercive, and growing superlinearly in $p$. In both cases, we assume that $u_0\in \lip(\X)$, where $\lip(\X)$ denotes the set of all Lipschitz functions in $\X$.  

Our main result in this case of \eqref{eq special} is as below. 
\begin{thm}[Well-posedness for Monge solutions of eikonal-type equations]\label{thm:into1}
     Let $(\X,d)$ be a complete length space. Let $T>0$. Assume that $u_0\in \lip(\X)$ is bounded. Assume in addition that $f\in C(\X\times (0, T))$ is bounded and $f(x, t)$ is Lipschitz with respect to either $x$ or $t$. Then, there exists a unique bounded Monge solution $u\in \lip(\X\times [0, T))$ of \eqref{eq special} satisfying \eqref{initial} in the sense of 
     \begin{equation}\label{initial-conti0}
     \sup_{x\in \X} |u(x, t)-u_0(x)|\to 0 \quad \text{as $t\to 0+$.}
     \end{equation}
 \end{thm}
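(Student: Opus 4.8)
The plan is to regard the eikonal-type equation \eqref{eq special} as a stationary problem on the space-time product $(\Y_T,\bar d)$ and to exhibit the unique solution as an explicit optimal-control (Hopf--Lax type) value function, whose required properties can all be extracted from a dynamic programming principle. Concretely, I would set
\[ U(x,t):=\inf\Big\{u_0(\gamma(0))+\int_0^t f(\gamma(\tau),\tau)\,d\tau\Big\}, \]
where the infimum runs over all $1$-Lipschitz curves $\gamma\colon[0,t]\to\X$ with $\gamma(t)=x$ (i.e.\ $d(\gamma(\tau),\gamma(\tau'))\le|\tau-\tau'|$), and take $v=U+kt$ as in \eqref{unknown-change} for a constant $k$ to be fixed. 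Boundedness of $U$ is immediate from the bounds on $u_0$ and $f$. The guiding principle is that the finite propagation speed encoded in the constraint $d(\gamma(\tau),\gamma(\tau'))\le|\tau-\tau'|$ forces the value at $(x,t)$ to depend only on $u_0$ on the closed ball $\overline{B}(x,t)$, so that the trace on $\X\times\{0\}$ alone determines the solution, and that $U$ is simultaneously a Monge sub- and supersolution.

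Next I would establish the regularity placing $U$ in $S_k(\Y_T)$ and in $\lip(\X\times[0,T))$. The dynamic programming principle $U(x,t)=\inf\{U(\gamma(s),s)+\int_s^t f(\gamma,\tau)\,d\tau\}$ gives, via the constant curve $\gamma\equiv x$, the one-sided bound $U(x,t)-U(x,s)\le\|f\|_\infty(t-s)$, and the spatial Lipschitz bound gives the reverse one-sided estimate, hence $U\in S_k(\Y_T)$ with $k$ comparable to the spatial Lipschitz constant plus $\|f\|_\infty$. For the spatial bound I expect the hypothesis ``$f$ Lipschitz in $x$ or in $t$'' to split into two mechanisms: when $f$ is Lipschitz in $x$, comparing near-optimal curves for two nearby points by an inf-convolution argument yields Lipschitz constant $\mathrm{Lip}(u_0)+T\,\mathrm{Lip}_x f$; when $f$ is Lipschitz in $t$, one first obtains a two-sided Lipschitz bound in $t$ (the delicate lower direction via a tail reparametrization $\tau\mapsto\gamma(\tau+(t'-t))$ of a near-optimal curve, using $\mathrm{Lip}(u_0)$ and $\mathrm{Lip}_t f$), and then deduces spatial Lipschitz continuity from the equation, since $|\nabla u|=f-\partial_t u$ becomes bounded and $\X$ is a length space. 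The uniform initial condition \eqref{initial-conti0} is then straightforward, the constant curve giving $U(x,t)\le u_0(x)+\|f\|_\infty t$ and the constraint $\gamma(0)\in\overline{B}(x,t)$ with $\mathrm{Lip}(u_0)$ giving $U(x,t)\ge u_0(x)-(\mathrm{Lip}(u_0)+\|f\|_\infty)t$. Finally I would verify the Monge equality \eqref{monge-eikonal-intro}, $|D^-v|=f+k$, from the principle rewritten as $v(x,t)=\inf\{v(\gamma(s),s)+\int_s^t(f+k)\,d\tau\}$ with nonnegative integrand: the inequality $|D^-v|\le f+k$ uses competitors reaching $(x,t)$ from nearby $(y,s)$ with $s<t$, where $\bar d=t-s$ and continuity of $f$ gives the ratio $f(x,t)+k$, while $|D^-v|\ge f+k$ follows by evaluating the ratio along a near-optimal curve, where the principle is nearly an equality.

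For uniqueness, given two bounded Lipschitz Monge solutions $u_1,u_2$ with the same initial data, I would pass to a common $k$ (both lie in $S_k$ for $k=\max\{k_1,k_2\}$, and the Monge property is independent of the admissible choice of $k$), so that $v_i=u_i+kt$ solve the same stationary problem $|D^-v|=f+k=:g\ge0$ in $\Y_T$ with the same trace $v_i(\cdot,0)=u_0$. It then suffices to show that any such $v$ coincides with the value function $V(x,t)=\inf\{u_0(\gamma(0))+\int_0^t g(\gamma,\tau)\,d\tau\}$. The bound $v\le V$ follows by integrating the subslope inequality $|D^-v|\le g$ along the space-time lift $\sigma(\tau)=(\gamma(\tau),\tau)$ of an arbitrary $1$-Lipschitz curve, using that $|D^-v|$ is an upper gradient for the increments of the Lipschitz function $v$ and that $\bar d(\sigma(\tau),\sigma(\tau'))=|\tau-\tau'|$. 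The reverse bound $v\ge V$ must use the supersolution information $|D^-v|\ge g$, and this is the step I expect to be the main obstacle: without test functions one has to construct, starting from $(x,t)$, a near-steepest-descent curve back to the initial time along which $v$ decreases at the full rate $g$, so that $v(x,t)\ge u_0(\gamma(0))+\int_0^t g\ge V(x,t)$. Building such a curve in a general metric space — chaining the directions in which the $\limsup$ defining $|D^-v|$ is nearly attained, controlling measurability and the accumulated error, and ensuring the curve reaches $t=0$ rather than stalling — is the technical core; here the monotonicity built into $S_k$ and the finite propagation speed are precisely what guarantee that only the trace at $t=0$ is relevant, so that no condition at $t=T$ or at spatial infinity enters. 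Combining $v\le V$ and $v\ge V$ yields $v_1=v_2=V$, hence $u_1=u_2$, and simultaneously identifies the solution constructed above with $V$.
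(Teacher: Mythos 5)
Your existence half is essentially the paper's own argument: after the time reversal $\sigma\mapsto t-\sigma$, your value function $U$ is exactly the formula \eqref{eq value_f0}, and the chain (boundedness and Lipschitz regularity of the value function, dynamic programming principle, verification of $|D^-v|=f+k$) is precisely Proposition~\ref{prop bound-lip}, Lemma~\ref{lem dpp1} and Proposition~\ref{prop existence_2}. One caveat there: in the case where $f$ is Lipschitz in $t$ only, your plan to ``deduce spatial Lipschitz continuity from the equation, since $|\nabla u|=f-\partial_t u$ becomes bounded'' is circular as stated, because the Monge property is only verified \emph{after} the Lipschitz regularity is in hand; the paper instead extracts the spatial estimate directly from the control formula, by concatenating a near-optimal curve with a short connecting arc and estimating the difference of the two cost integrals (see \eqref{eq lip-bound2-2}). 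This is fixable (a DPP argument in the spirit of Proposition~\ref{prop gen-lip} works), but as written it is not a proof.

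The genuine gap is in your uniqueness half. You reduce uniqueness to showing that every bounded Lipschitz Monge solution coincides with the value function $V$, and this splits into $v\le V$ (which is fine: along the lift $\sigma(\tau)=(\gamma(\tau),\tau)$ of a $1$-Lipschitz curve one has $\bar d(\sigma(\tau),\sigma(\tau-\delta))=\delta$, so $(v\circ\sigma)'\le |D^-v|\circ\sigma\le g$ a.e.\ and one integrates) and $v\ge V$, which you yourself flag as the ``technical core'' and do not prove. That step is not a routine technicality. The Monge supersolution property is a pointwise $\limsup$ condition: at each point it only supplies a sequence of nearby points, with increments $\delta_j\to 0$ that cannot be bounded below, along which the difference quotient is nearly $\ge g$. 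Chaining such points can stall (the concatenated curve may converge to an interior point of $\Y_T$ without ever reaching $t=0$), the selection has measurability issues, and the accumulated errors are not controlled against the elapsed time without a uniformity that the definition does not provide. The paper never constructs descent curves at all: uniqueness follows from the comparison principle, Theorem~\ref{thm:comparison1}, proved by penalizing $v_1-v_2$ with $-\kappa/(T-t)$, applying Ekeland's variational principle on the complete metric space $(\X\times[\eta,T-\eta],\bar d)$ to obtain a point $z_\vep$ at which $\Phi-\vep\,\bar d(\cdot,z_\vep)$ has an honest local maximum, and then comparing subslopes at that single point: local maximality gives $|D^-v_2|(z_\vep)\le |D^-v_1|(z_\vep)-\kappa/T^2+\vep$, contradicting $|D^-v_1|\le f+k\le |D^-v_2|$ once $\vep<\kappa/T^2$. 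To complete your proposal you would either have to carry out the steepest-descent construction in a general complete length space (a substantial open-ended task, not known to follow from the pointwise definition alone) or replace your uniqueness argument by this Ekeland-based comparison.
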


Since \eqref{eq special} is now understood as a stationary eikonal equation \eqref{eq special2} as explained in Section~\ref{sec:monge-notion}, we essentially follow the approach in \cite{LShZ} to prove this theorem, particularly the comparison principle. The existence of Monge solutions is proved by adapting the classical control-theoretic interpretation for eikonal equations. In fact, we show that $u$ given by 
\begin{equation}\label{eq value_f0}
    u(x,t) = \inf_{\gamma\in \Gamma_{x}}\left\{ 
u_0(\gamma(t))+\int_0^t f(\gamma(\sigma),t-\sigma)\;d\sigma \right\},\quad (x,t)\in \X\times[0,T)
\end{equation} 
is a Monge solution of \eqref{eq special} satisfying \eqref{initial-conti0}, where, for any $x\in \X$,  
\begin{equation}\label{eikonal-curve}
\begin{aligned}
    \Gamma_{x}:=& \bigg\{ \gamma:[0,\infty)\to \X: \ \text{$\gamma$ is Lipschitz, $\gamma(0)=x$, and } |\gamma'|\leq 1\ \text{a.e. in } (0,\infty)\bigg\}. 
\end{aligned}
\end{equation}
In contrast to the usual arguments for verifying the viscosity solution property of a value function, an additional step here is to show that $u\in S_k(\Y_T)$ for a certain  $k\geq 0$. This is achieved by proving the Lipschitz continuity of $u$ in $\Y_T$, adopting the regularity assumptions on $u_0$ and $f$ stated in Theorem~\ref{thm:into1}.  

Analogous well-posedness results can be obtained for \eqref{eq general}, where $H\in C(\X\times (0, T)\times [0, \infty))$ is assumed to satisfy a set of conditions as listed below. We first assume that 
\begin{enumerate}
\item[(H1)] $p\mapsto H(x, t, p)$ is convex and nondecreasing in $[0, \infty)$ for any $x\in \X$ and $t\in (0, T)$. It is also coercive in the sense that
    \begin{equation}\label{H-cocercive}
\inf\left\{ \frac{H(x, t, p)}{p}: (x, t)\in \X\times (0, T),\ p\geq R\right\}\to \infty\quad \text{as $R\to \infty$.}
\end{equation}
\end{enumerate}
Let $L: \X\times (0, T)\times [0, \infty)\to \R$ be defined as in \eqref{legendre0}.
We see that $q\mapsto L(x, t, q)$ is convex and nondecreasing in $[0, \infty)$. We also obtain
\begin{equation}\label{legendre}
    H(x, t, p)=\sup_{q\geq 0}\{p q- L(x, t, q)\}, \quad\text{for $x\in \X$, $t\in (0, T)$, $p\geq 0$,}
\end{equation}
whose proof is similar to that of \cite[Proposition 2.1]{Na1}. More assumptions on $L$ are as follows. 
\begin{enumerate}
\item[(H2)] There exists a convex function $m\in C(\R)$ such that
    \begin{equation}\label{L-lower}
    \inf_{(x, t)\in \X\times (0, T)}L(x, t, q)\geq m(q) \quad\text{ for all $q\geq 0$. }
    \end{equation}
    and 
    \begin{equation}\label{L-cocercive}
  \frac{m(q)}{q}\to \infty \quad \text{as $q\to \infty$}.
\end{equation}
\item[(H3)] $\sup\limits_{(x, t)\in\X\times (0,T)}L(x, t, 0)< +\infty$.

\smallskip 

\item[(H4)] $L$ is locally uniformly continuous in $\X\times (0, T)\times [0, \infty)$, and there exists a modulus of continuity $\omega_L$ such that 
    \begin{equation}\label{L-lip1}
    \begin{aligned}
    |L(x,t, q)-L(y,s, q)|\leq &\  \omega_L(d(x,y)+|t-s|)(1+|m(q)|) \\
    & \text{for all $x, y\in \X$, $t, s\in (0, T)$, $q\geq 0$,}
    \end{aligned}
    \end{equation}
where $m\in C(\R)$ is given in (H2). 

\smallskip 

\item[(H5)] $L$ is Lipschitz continuous with respect to the time variable; namely, there exists $C_T>0$ such that 
    \begin{equation}\label{L-lip2}
    |L(x, t, q)-L(x, s, q)|\leq C_T |t-s| \quad \text{for all $x\in \X$, $t, s\in (0, T)$, $q\geq 0$.}
    \end{equation}
\end{enumerate}   
Let us state our main result in this case. 
\begin{thm}[Well-posedness for Monge solutions of superlinear Hamilton-Jacobi equations]\label{thm:intro2}
     Let $(\X,d)$ be a complete length space and $T>0$. 
     Assume that $H\in C(\X\times (0, T)\times [0, \infty))$ satisfies (H1) and $L\in C(\X\times (0, T)\times [0, \infty))$ given by \eqref{legendre0} satisfies (H2)--(H5). Let $u_0\in\lip(\X)$ be bounded.   Then, there exists a unique bounded Monge solution $u\in \lip(\X\times [0, T))$ of \eqref{eq general} satisfying \eqref{initial-conti0}.  
\end{thm}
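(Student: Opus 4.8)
The plan is to reduce Theorem~\ref{thm:intro2} to a stationary problem in the product space $\Y_T=\X\times(0,T)$ with metric $\bar d$, exactly as outlined in Section~\ref{sec:monge-notion}, and then adapt the two-part strategy already used for Theorem~\ref{thm:into1}: first establish the comparison principle (uniqueness), then construct a solution via a Hopf--Lax-type value function (existence). The key preliminary step is to show that any candidate solution can be placed in the class $S_k(\Y_T)$ for a suitable $k\geq 0$. Given $u_0\in\lip(\X)$ bounded, one expects the value function to inherit a one-sided Lipschitz bound in time from the coercivity (H1)/(H2) and the bound (H3) on $L(x,t,0)$: traveling with zero speed gives $u(x,t)\leq u_0(x)+t\sup L(\cdot,\cdot,0)$, while the superlinear lower bound $m$ controls the opposite direction. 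This yields $u\in S_k(\Y_T)$ and legitimizes the change of unknown \eqref{unknown-change}, so that the equation becomes the stationary Monge condition $|D_L^-v|\equiv k$ with $v$ nondecreasing in $t$, for which the simplified form \eqref{def DL-into} applies.

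For \emph{existence}, I would define the value function by the dynamic-programming formula
\begin{equation}\label{eq:value-general}
u(x,t)=\inf\left\{u_0(\gamma(t))+\int_0^t L\!\left(\gamma(\sigma),t-\sigma,|\gamma'(\sigma)|\right)d\sigma:\ \gamma\in\Gamma_x\right\},
\end{equation}
the natural Lagrangian analogue of \eqref{eq value_f0}, where $\Gamma_x$ is as in \eqref{eikonal-curve} but now with the speed $|\gamma'|$ unconstrained (the superlinear growth of $L$ makes the infimum finite and coercive). The main work is to prove three things about \eqref{eq:value-general}: the dynamic programming principle, which gives the semigroup property and hence the metric Hopf--Lax relation displayed in Section~\ref{sec:monge-notion}; the global Lipschitz regularity of $u$ on $\X\times[0,T)$, using (H4)--(H5) to control the $x$- and $t$-dependence of $L$ together with the superlinear bound $m$ to control optimal speeds; and the attainment of the initial data in the sense of \eqref{initial-conti0}, which follows from comparing \eqref{eq:value-general} against the constant-in-space competitor and using (H3). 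Once $u$ is Lipschitz and lies in $S_k(\Y_T)$, the Hopf--Lax semigroup identity differentiated infinitesimally yields $|D_L^-v|(x,t)\leq k$, while the reverse inequality $|D_L^-v|(x,t)\geq k$ follows from testing along near-optimal trajectories; together these give the Monge solution property \eqref{monge-intro}.

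For \emph{uniqueness}, I would prove a comparison principle for the stationary equation $|D_L^-v|=k$ on $\Y_T$ by adapting the doubling-of-variables argument from \cite{LShZ}. The structure mirrors the eikonal case, but the Lagrangian $L$ replaces the linear cost, so the penalization must be chosen compatibly with the superlinear geometry: one compares a Monge subsolution and supersolution, doubles variables with a penalty of the form $d(x,y)^2/(2\varepsilon)+|t-s|^2/(2\varepsilon)$ adapted to the product metric $\bar d$, and extracts a contradiction at an interior maximum using the definition \eqref{def DL-into} of $|D_L^-v|$ and the convexity of $L$. A separate argument handles comparison down to the initial time $t=0$, where the two solutions agree by \eqref{initial-conti0}; the one-sided time monotonicity built into $S_k(\Y_T)$ is what prevents the maximum from escaping to the time boundary and is essential to closing the estimate.

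The hard part will be the Lipschitz regularity of the value function \eqref{eq:value-general} and, intertwined with it, the verification of the Monge property, because the loss of degree-one homogeneity means the spatial and temporal slopes no longer combine into a single metric slope as in \eqref{eq special2}. Controlling optimal speeds requires the coercivity (H2) to rule out trajectories that move too fast, while the modulus estimate (H4) with its $(1+|m(q)|)$ weight must be balanced against this speed bound to keep the $x$-regularity uniform; the time-Lipschitz assumption (H5) plays the analogous role in $t$. I expect the delicate point to be showing that near-optimal trajectories have uniformly bounded speed on the relevant time scale, since this is what converts the qualitative superlinearity into the quantitative Lipschitz and slope estimates needed for both \eqref{initial-conti0} and the pointwise identity \eqref{monge-intro}.
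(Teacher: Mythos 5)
Your existence argument follows the paper's route almost exactly: the value function \eqref{value_funct} (the paper takes the infimum over the admissible class $\mathcal{A}_x(\X)$ rather than unconstrained Lipschitz curves, but this is cosmetic), the dynamic programming principle, boundedness from (H3), time-Lipschitz and space-Lipschitz estimates from (H2)--(H5) with Jensen's inequality and coercivity to bound near-optimal speeds, and then the two Monge inequalities — the upper bound $|D_L^-v|\leq k$ from the DPP inequality along connecting curves, the lower bound $|D_L^-v|\geq k$ along near-optimal trajectories. This matches Propositions~\ref{prop bdd}, \ref{prop gen-lip-time}, \ref{prop gen-lip}, \ref{prop gen-monge1}, \ref{prop gen-monge2}.

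The uniqueness part, however, has a genuine gap. You propose a doubling-of-variables argument with penalty $d(x,y)^2/(2\varepsilon)+|t-s|^2/(2\varepsilon)$, attributed to \cite{LShZ}; but that is not what \cite{LShZ} does, and the method cannot work here. First, doubling relies on the penalty acting as a test function on each side, and the whole point of Monge solutions is that there are no test functions: the paper explicitly advertises that its comparison proof (Theorem~\ref{thm:comparison-gen}) avoids doubling altogether. Second, and more fundamentally, doubling fails in a metric space for structural reasons: at a maximum point $(\bar x,\bar t,\bar y,\bar s)$ of $v_1(x,t)-v_2(y,s)-\phi$, the Monge supersolution property of $v_2$ produces a sequence $(y_j,s_j)\to(\bar y,\bar s)$ of fast decrease, and to contradict the subsolution property of $v_1$ you must convert this into a sequence approaching $(\bar x,\bar t)$ along which the quadratic penalties nearly cancel. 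In $\R^n$ one takes $x'=\bar x+(y_j-\bar y)$; in a general length space there is no translate of $\bar x$ by the ``increment'' $y_j-\bar y$, so the argument cannot be closed. Third, in a non-locally-compact space the doubled supremum need not be attained, so Ekeland's variational principle is unavoidable, and your sketch never invokes it.

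The paper's actual comparison argument works at a \emph{single} point: penalize only in time by $-\kappa/(T-t)$, apply Ekeland's principle to $\Phi-\varepsilon\bar d(\cdot,z_\varepsilon)$ on $(\X\times[\eta,T-\eta],\bar d)$, and then feed the supersolution's approximating sequence $(y_j,s_j)$ for $v_2$ at $z_\varepsilon$ directly into the local-maximum inequality to bound $|D_L^-v_1|(z_\varepsilon)$ from below, contradicting the subsolution bound. A step you also miss is why this is legitimate: the Ekeland error enters as $\varepsilon\max\{d(x_\varepsilon,y_j)/(t_\varepsilon-s_j),1\}$, and controlling it requires the ratio $d(x_\varepsilon,y_j)/(t_\varepsilon-s_j)$ to stay bounded; this is exactly where the hypothesis $u_2\in\lip(\Y_T)$ together with the coercivity \eqref{L-cocercive} of (H2) is used (giving $d(x_\varepsilon,y_j)\leq R(t_\varepsilon-s_j)$ via \eqref{coercive-R}). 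This is also why Theorem~\ref{thm:intro2} asserts uniqueness within the class of Lipschitz Monge solutions: the comparison principle requires one of the two functions to be Lipschitz, a point absent from your proposal.
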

As an advantage of utilizing the notion of Monge solutions without requiring any test functions, our proof of comparison principle for both eikonal-type and superlinear Hamilton-Jacobi equations does not rely on the doubling variable technique, which is typically required in the standard viscosity argument. For this reason, we do not need the continuity of $H$ with respect to $x$ and $t$ (related to (H4)(H5) above) to prove the comparison principle. Although this work focuses solely on the case of continuous Hamiltonians, the same approach can be extended to handle discontinuous Hamilton-Jacobi equations, as shown in \cite{LShZ2} for stationary discontinuous eikonal equations.  Discontinuous evolution Hamilton-Jacobi equations in the Euclidean space have been studied with different methods; see, for example, \cite{Cam, CaSi, Dav, ChHu, GHa}. Comparing our Monge approach with these methods merits further study in future work. 

The existence of Monge solutions in this case is again based on a generalization of the classical control-theoretic (variational) interpretation of \eqref{eq general}. Our convenience in showing the uniqueness of Monge solutions comes at the cost of an additional regularity assumption $u_0\in \lip(\X)$, which enables us to show $u\in \lip(\Y_T)$ and therefore $u\in S_k(\Y_T)$ for some $k\geq 0$. 

In addition to the uniqueness and existence results for Monge solutions, we also discuss the connection with other existing approaches. We focus on the special case when $H$ is independent of $t$, i.e., $H=H(x, p)$, for which the equation \eqref{eq general} reduces to 
\begin{equation}\label{eq time-ind}
  \partial_t u+H(x, |\nabla u|)=0 \quad \text{in $\X\times (0, T)$.}
\end{equation}
We can compare the three notions in this case, since the well-posedness results for curve-based and slope-based viscosity solutions are explicitly available in \cite{Na1, GaS} in this setting. Our equivalence result applies to the initial value problem for 
the following typical Hamilton-Jacobi equation:
\begin{equation}\label{eq power}
    \partial_t u+a(x) |\nabla u|^\alpha=f(x) \quad \text{in $\X\times (0, T)$,}
\end{equation}
where $\alpha>1$ is given, and $a, f: \X\to \R$ are assumed to be bounded uniformly continuous with $\inf_\X a>0$. 
\begin{thm}[Equivalence of curve-based, slope-based, and Monge solutions]\label{thm:equiv-power}
 Let $(\X,d)$ be a complete length space and $T>0$. Let $\alpha>1$ and $a, f: \X\to \R$ be bounded uniformly continuous functions such that $\inf_\X a>0$. Assume that $u_0\in \lip(\X)$ is bounded and $u\in C(\X\times (0, T))$ satisfies \eqref{initial-conti0}. Then the following statements are equivalent. 
\begin{enumerate}
     \item[(i)] $u$ is a curve-based viscosity solution of \eqref{eq power};
     \item[(ii)] $u$ is a slope-based viscosity solution of \eqref{eq power};
     \item[(iii)] $u$ is a Monge solution of \eqref{eq power}.
 \end{enumerate}
\end{thm}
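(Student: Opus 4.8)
The plan is to deduce the three-way equivalence from the three separate well-posedness theorems by exhibiting a single function that is simultaneously a solution in all three senses. First I would verify that the Hamiltonian $H(x,p)=a(x)p^\alpha-f(x)$ associated with \eqref{eq power} falls within the scope of Theorem~\ref{thm:intro2}: since $\alpha>1$, $\inf_\X a>0$, and $a,f$ are bounded, $p\mapsto H(x,p)$ is convex, nondecreasing and coercive on $[0,\infty)$, so (H1) holds; the Legendre transform \eqref{legendre0} is the explicit power Lagrangian $L(x,q)=c_\alpha\,a(x)^{-1/(\alpha-1)}q^{\alpha/(\alpha-1)}+f(x)$, which is $t$-independent and, using the boundedness and uniform continuity of $a,f$ together with $\inf_\X a>0$, satisfies (H2)--(H5). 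Hence Theorem~\ref{thm:intro2} yields a unique bounded Monge solution $u^M\in\lip(\X\times[0,T))$ of \eqref{eq power} with \eqref{initial-conti0}. In parallel I would record that, under the same hypotheses, \cite{Na1} and \cite{GaS} furnish a unique curve-based viscosity solution $u^C$ and a unique slope-based viscosity solution $u^S$ of the initial value problem.

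Granting this, the equivalence reduces to a single claim: the Monge solution $u^M$ is also a curve-based and a slope-based viscosity solution. Once this is known, uniqueness of curve-based (resp.\ slope-based) solutions forces $u^C=u^M$ (resp.\ $u^S=u^M$), so the three distinguished solutions coincide. For an arbitrary $u\in C(\X\times(0,T))$ satisfying \eqref{initial-conti0}, each of (i), (ii), (iii) is then equivalent, by the respective uniqueness theorem, to the single identity $u=u^M$, whence the three statements are equivalent to one another. This is the standard ``equivalence through uniqueness'' mechanism, with the Monge solution playing the bridging role it already played for stationary equations in \cite{LShZ}.

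To see that $u^M$ is a curve-based solution, I would use the control-theoretic representation underlying the existence part of Theorem~\ref{thm:intro2}: the Monge solution is the value function of a variational problem with Lagrangian $L$, and it satisfies the associated dynamic programming principle. Along arc-length parametrized curves the one-dimensional reduction defining curve-based solutions reproduces exactly this variational formula, so the sub- and supersolution inequalities of \cite{Na1} follow from the optimality of $u^M$ and its dynamic programming principle; this is the direction that parallels the classical verification that a value function solves its Hamilton-Jacobi equation.

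The substantive step, and the one I expect to be the main obstacle, is showing that $u^M$ is a slope-based viscosity solution. Here I must translate the space-time Monge condition \eqref{monge-intro}--\eqref{def DL-into} for $v=u^M+kt$, phrased through the joint subslope relative to the product metric $\bar{d}$, into the test-function inequalities of the slope-based definition, which treat the time derivative and the spatial slope $|\nabla u^M|$ asymmetrically. The difficulty is twofold: the loss of degree-one homogeneity for $\alpha>1$ means the quadratic-type test functions must be matched against the nonlinear Lagrangian term $L(x,d(x,y)/(t-s))$ through Legendre duality rather than by direct comparison, and the supersolution inequality in particular requires care. I would exploit the regularity gained from $u_0\in\lip(\X)$ and (H4)--(H5), namely $u^M\in\lip(\Y_T)$ and $u^M\in S_k(\Y_T)$, to justify the infinitesimal slope computations, and use the tightness of the Legendre transform for the explicit power Lagrangian to identify the slope-based sub/super inequalities with \eqref{monge-intro}. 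The consistency already signalled for the quantity $|D_L^- v|$ is precisely what should make this identification possible.
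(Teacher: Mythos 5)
Your proposal is correct and follows essentially the same route as the paper: the paper likewise identifies the Monge solution with the curve-based solution through the shared value-function representation and dynamic programming principle (Theorem~\ref{thm:equiv-cm}), proves directly that Monge solutions are slope-based solutions via test functions and Legendre duality (Proposition~\ref{semi-eqiuv_1}), and closes the loop with the slope-based comparison principle (Theorem~\ref{unique_slope}) --- exactly your ``equivalence through uniqueness'' mechanism. The only cosmetic difference is that you verify (H1)--(H5) directly from the explicit power Lagrangian, whereas the paper checks the hypotheses of Theorem~\ref{unique_slope} for the power Hamiltonian and shows in the proof of Theorem~\ref{thm:equivalence1} that those hypotheses imply (H1)--(H4).
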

Since the Monge solution is shown to be Lipschitz continuous in $\X\times [0, T)$ in Theorem~\ref{thm:intro2}, as a consequence of Theorem~\ref{thm:equiv-power}, we see that both the curve-based and slope-based viscosity solutions of the initial value problem for \eqref{eq power} are Lipschitz continuous as well. 

The equivalence result in Theorem~\ref{thm:equiv-power} is an application of our discussions in Section~\ref{sec:equivalence} to the special case \eqref{eq power}. Further equivalence results for \eqref{eq time-ind} with more general superlinear Hamiltonians are given in Theorem~\ref{thm:equiv-cm} and Theorem~\ref{thm:equivalence1}. Using similar arguments, these three notions of solutions are also shown to be equivalent for the eikonal-type equation 
\begin{equation}\label{eq special0}
    \partial_t u+|\nabla u|=f(x) \quad \text{in $\X\times(0,T)$,}
\end{equation}
where $f$ is assumed to be bounded and uniformly continuous in $\X$. See Theorem~\ref{thm:equiv-eikonal1} and Theorem~\ref{thm:equiv-eikonal2} for details. Our current equivalence results rely on the comparison principle for the initial value problem, assuming the Lipschitz regularity of the initial data. It is of our interest in future work to study the local equivalence between these notions in $\X\times (0, T)$ without using the initial value. 

We remark that our notion of Monge solutions, along with the associated well-posedness and equivalence results, can be extended to a broader class of convex Hamilton-Jacobi equations in \eqref{eq time-ind}, including Hamiltonians that fail to satisfy the strong coercivity condition \eqref{H-cocercive} and exhibit a more general linear growth in $p$ than  \eqref{eq special}. However, for simplicity of our presentation in this paper, we will not pursue a detailed study on this case, since it involves more technical complications in dealing with a more general, possibly infinity-valued Lagrangian $L$. 

This paper is organized as follows. Section~\ref{sec:prelim} provides an overview of preliminaries on Hamilton-Jacobi equations in metric spaces, including the notions and some basic properties of the curve-based and slope-based viscosity solutions. In Section~\ref{sec:eikonal}, we introduce the definition of Monge solutions to \eqref{eq special} and prove the well-posedness result stated in Theorem~\ref{thm:into1} for the initial value problem. Our study of Monge solutions to \eqref{eq general} under (H1)--(H5), including the proof of Theorem~\ref{thm:intro2}, is presented in Section~\ref{sec:superlinear}. Finally, Section~\ref{sec:equivalence} explores the relations between the curve-based, slope-based solutions and Monge solutions. 

\subsection*{Acknowledgments}

The work of QL was supported by JSPS Grant-in-Aid for Scientific Research (No.~22K03396).



\section{Preliminaries}\label{sec:prelim}

In this section, we briefly review two notions of metric viscosity solutions of \eqref{eq time-ind} and the related well-posedness results for the eikonal-type equation \eqref{eq special0} and the superlinear equation \eqref{eq general} (with $H$ independent of $t$ satisfying (H1)--(H4)), which are of our particular interest in this work. The first notion is introduced in \cite{Na1} and based on an earlier work \cite{GHN} on the stationary eikonal equation. The primary idea is to apply the optimal control interpretation and consider the composition of functions and curves in $\X$, reducing the problem to one space dimension. The second type, proposed by \cite{AF, GaS2, GaS}, employs appropriate test functions to substitute the $C^1$ class in the Euclidean spaces, thus extending the standard viscosity solution theory to metric spaces.

For our convenience in later reference, we will call the solutions studied in \cite{Na1} curve-based (viscosity) solutions and those in \cite{AF, GaS2, GaS} slope-based (viscosity) solutions.
We will discuss the relation between these notions and the Monge solution in Section~\ref{sec:equivalence}. 

\subsection{Curve-based viscosity solutions}

Let us begin with preliminaries about the curve-based solutions of \eqref{eq time-ind} introduced in \cite{Na1}. We assume that $H\in C(\X\times [0, \infty))$ satisfies (H1). Let $L: \X\times [0, \infty)\to \R$ be the Lagrangian defined by \eqref{legendre0} (without $t$-dependence). Note that without further assumptions on $H$ or $L$, such a function $L$ may take infinity values. One typical example is the eikonal-type equation \eqref{eq special0}, which gives, for $x\in \X$, $p, q\geq 0$, 
\begin{equation}\label{eikona-hl}
H(x, p)=p-f(x), \qquad L(x, q)=\begin{cases}
    f(x) \quad & \text{for}\ 0\leq q\leq 1, \\
    \infty & \text{for } q>1. 
    \end{cases}
\end{equation}
The following curve class is used in \cite{Na1}. 
\begin{defi}
Let $AC(I,\X)$ denote the set of all absolutely continuous curves in $\X$ defined on an interval $I$ of $\R$. Let $\mathcal{A}(\X)$ be the set of all admissible curves $\gamma\in AC([0,\infty),\X)$ such that the curve speed $|\gamma'|$ is piecewise constant and $L(\gamma(\sigma), t-\sigma, |\gamma'|(\sigma))$ is piecewise continuous, that is, $\sigma \mapsto |\gamma'|(\sigma)$ equals a constant $v_I$ a.e. and $\sigma\mapsto L(\gamma(\sigma), t-\sigma, v_I)$ is continuous on each $I=[0,r_1],[r_1,r_2],\ldots,[r_n,\infty)$ with finitely many $r_1,\ldots,r_n$. For $x\in\X$, we denote 
\begin{equation}\label{admissible_set_gen}
\mathcal{A}_x(\X)=\{\gamma\in\mathcal{A}(\X)\;|\;\gamma(0)=x\}.    
\end{equation}
\end{defi}

We omit the original definition of curve-based solutions since it involves more notations and technical details. Instead, we take from \cite{Na1} a convenient characterization of curve-based subsolutions and supersolutions established via the Lagrangian $L$. Below, $u$ is said to be an arcwise continuous function in $\X\times(0,T)$ if for every $\gamma\in AC(\R,\X)$, the function $w(s,t)=u(\gamma(s),t)$ is continuous in $\R\times(0,T)$. 

\begin{prop}[\cite{Na1}, Proposition 3.1]\label{curve_subsol}
 For an arcwise continuous function $u$ in $\X\times(0,T)$, the following conditions are equivalent:
    \begin{itemize}
        \item[(i)] $u$ is a curve-based viscosity subsolution of \eqref{eq time-ind} 
        \item[(ii)] For any $(x,t)\in\X\times(0,T)$ and any $\gamma\in \mathcal{A}_x(\X)$, 
        \[
        u(x,t)\leq \int_0^\delta L(\gamma(\sigma),|\gamma'|(\sigma))\,  d\sigma + u(\gamma(\delta),t-\delta)
        \]
        holds for all $\delta\in [0,t)$.
    \end{itemize}
\end{prop}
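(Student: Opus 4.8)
The plan is to read the asserted equivalence as the \emph{sub-optimality principle} of optimal control. Fix $(x,t)\in\X\times(0,T)$ and $\gamma\in\mathcal{A}_x(\X)$, and set
\[
g(\sigma):=u(\gamma(\sigma),t-\sigma)+\int_0^\sigma L(\gamma(\tau),|\gamma'|(\tau))\,d\tau,\qquad \sigma\in[0,t).
\]
Since $g(0)=u(x,t)$, condition (ii) says exactly that $g(0)\le g(\delta)$ for every $\delta\in[0,t)$, i.e.\ that $g$ is nondecreasing along each admissible trajectory. Thus both implications reduce to converting between the pointwise subsolution inequality for $u$ and the monotonicity of $g$, with the Fenchel--Young inequality $pq\le H(y,p)+L(y,q)$ (a restatement of \eqref{legendre0}) and its dual \eqref{legendre} serving as the bridge between the Hamiltonian and the Lagrangian. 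Note that $g$ is continuous, because $u$ is arcwise continuous and, by admissibility of $\gamma$, the map $\sigma\mapsto L(\gamma(\sigma),|\gamma'|(\sigma))$ is piecewise continuous and finite; this also disposes of the possibly infinite values of $L$ (cf.\ \eqref{eikona-hl}), since only finite-cost speeds occur along admissible curves.

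For (i) $\Rightarrow$ (ii) I would show that $g$ is a viscosity subsolution of $-g'=0$ on each interval where $|\gamma'|$ is constant, and then conclude monotonicity from the classical one-dimensional lemma that a continuous function which, tested from above by any $C^1$ function, has nonnegative derivative at the contact point must be nondecreasing (concatenating across the finitely many speed-break points). The verification runs as follows: if $\phi$ touches $g$ from above at $\sigma_0$, then along the diagonal path the function $w(\sigma,t-\sigma)=u(\gamma(\sigma),t-\sigma)$ is touched from above by $\phi(\sigma)-\int_0^\sigma L(\gamma,|\gamma'|)\,d\tau$; extending this to a two-variable $C^1$ test for $w(s,r)=u(\gamma(s),r)$ and invoking the curve-based subsolution property gives $\partial_t u+H(\gamma(\sigma_0),|\nabla u|)\le0$ at $(\gamma(\sigma_0),t-\sigma_0)$. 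Combined with the upper-gradient bound $|(u\circ\gamma)'|\le|\nabla u|\,|\gamma'|$ and Fenchel--Young, this forces $\phi'(\sigma_0)\ge0$, which is precisely the subsolution test for $g$.

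For (ii) $\Rightarrow$ (i) I would differentiate the integral inequality. Given a $C^1$ test function touching $w=u\circ\gamma$ from above at an interior point, I would apply (ii) to short admissible trajectories issuing from the base point at a prescribed constant speed $q\ge0$, divide by $\delta$, and let $\delta\to0+$: the integral term contributes $L(x,q)$ in the limit and the boundary term contributes the one-sided rate of $u$ along the path. This yields $\partial_t u(x,t)+q\,|\nabla u|(x,t)-L(x,q)\le0$ for every admissible speed $q$, and taking the supremum over $q\ge0$ recovers $\partial_t u+H(x,|\nabla u|)\le0$ through the Legendre duality \eqref{legendre}, i.e.\ the curve-based subsolution property. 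To realize the full slope $|\nabla u|(x,t)$ rather than a smaller directional rate, I would take the probing curves to be nearly length-minimizing in the steepest direction, which is where the length-space structure of $(\X,d)$ enters.

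The main obstacle is the absence of a genuine chain rule in the metric setting: with no gradient available, the link between the slope $|\nabla u|$ and the derivatives of the compositions $u\circ\gamma$ must be carried entirely by the upper-gradient inequality in one direction and by the construction of slope-attaining curves in the other. Making the passage between the two-variable inequality for $u(\gamma(s),r)$ and the one-variable statement for $g$ rigorous---in particular extending one-dimensional test functions along the diagonal $s=\sigma,\ r=t-\sigma$ to admissible two-variable test functions and controlling the finitely many speed-break points---is the delicate technical core; by contrast, the Fenchel--Young and Legendre bookkeeping is routine.
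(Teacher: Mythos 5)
First, a structural point: the paper contains no proof of this statement to compare against. Proposition~\ref{curve_subsol} is imported verbatim from \cite{Na1} (Proposition~3.1), and the paper deliberately omits even the original definition of curve-based viscosity subsolutions, stating that it ``involves more notations and technical details''; condition (ii) is then used throughout as the \emph{working characterization} of curve-based subsolutions. So any assessment of your attempt has to be against the proof in \cite{Na1}, not against anything in this paper.

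On the merits, your scaffolding is the right one --- recasting (ii) as monotonicity of $g(\sigma)=u(\gamma(\sigma),t-\sigma)+\int_0^\sigma L(\gamma,|\gamma'|)\,d\tau$, the one-dimensional lemma that a continuous function with nonnegative viscosity derivative is nondecreasing, and Fenchel--Young/Legendre duality between $H$ and $L$ are all ingredients of the genuine argument --- but there is a gap that breaks both directions as written: you never engage with the actual definition of a curve-based subsolution. In the metric setting that definition is, by design, a family of \emph{one-dimensional} viscosity inequalities for the compositions $w(s,t)=u(\gamma(s),t)$ along admissible curves; there is no pointwise inequality ``$\partial_t u+H(x,|\nabla u|)\le 0$'' available, because $|\nabla u|$ (a slope or upper gradient) simply does not exist for a function that is merely arcwise continuous on a metric space. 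Your pivotal steps in (i)$\Rightarrow$(ii) (``invoking the curve-based subsolution property gives $\partial_t u+H(\gamma(\sigma_0),|\nabla u|)\le 0$'' together with the bound $|(u\circ\gamma)'|\le|\nabla u|\,|\gamma'|$) and in (ii)$\Rightarrow$(i) (``$\partial_t u+q|\nabla u|-L(x,q)\le 0$ for all $q$, then take the supremum'') all pass through these undefined objects, which is circular: giving meaning to $|\nabla u|$ is exactly what the curve-based notion is built to avoid. A correct proof must shuttle directly between the 1D tests on $w$ and the integral inequality, with the Hamiltonian entering only through \eqref{legendre0} and \eqref{legendre}. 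A second, related error: in (ii)$\Rightarrow$(i) you propose probing with ``nearly length-minimizing curves in the steepest direction,'' but the subsolution property to be verified concerns a \emph{fixed} admissible curve $\gamma$ and its composition $w$; values of $u$ off the trace of $\gamma$ cannot be compared with $w$ or its test functions, so the only admissible probes are (reparametrized) pieces of $\gamma$ itself. Constructing slope-attaining curves is relevant to supersolutions (Proposition~\ref{curve_supersol}) and to Monge solutions, not here. Finally, your claim that ``only finite-cost speeds occur along admissible curves'' is false --- in the eikonal case \eqref{eikona-hl} curves with speed exceeding $1$ are admissible, merely of infinite cost, so (ii) holds vacuously along them and the continuity of $g$ you rely on fails; this is a minor issue compared with the one above, but it also needs handling.
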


\begin{prop}[\cite{Na1}, Proposition 3.2]\label{curve_supersol}
	For an arcwise continuous function $u$ in $\X\times(0,T)$, the following conditions are equivalent:
    \begin{itemize}
        \item[(i)] $u$ is a curve-based viscosity supersolution of \eqref{eq time-ind} 
        \item[(ii)] For  any $(x,t)\in\X\times(0,T)$ and any $\varepsilon>0$, there exists $\gamma\in \mathcal{A}_x(\X)$ such that
        \[
        u(x,t)\geq \int_0^\delta L(\gamma(\sigma),|\gamma'|(\sigma))\,  d\sigma + u(\gamma(\delta),t-\delta)-\varepsilon
        \]
        holds for all $\delta\in [0,t)$.
    \end{itemize}
\end{prop}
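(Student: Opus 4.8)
The statement is a dynamic programming characterization: condition (ii) asserts the existence of a near-optimal trajectory realizing the Lax--Oleinik lower bound, which is precisely what the viscosity supersolution property ought to encode. The plan is to prove the two implications separately, working from the definition of curve-based supersolution in \cite{Na1}, under which a one-dimensional viscosity supersolution condition is imposed on the compositions $w(s,t)=u(\gamma(s),t)$ and $|\nabla u|$ is recovered as a supremum of directional subderivatives along curves. Throughout, the Legendre duality \eqref{legendre} between $H$ and $L$ together with the convexity and monotonicity in (H1) are the algebraic engine, while the coercivity of $L$ and completeness of $(\X,d)$ supply the compactness needed for the control-theoretic argument.

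For the direction (ii) $\Rightarrow$ (i) I would verify the supersolution inequality at an arbitrary $(x,t)$ by testing from below. Given a curve-based test function $\phi\le u$ touching $u$ at $(x,t)$ and the near-optimal curve $\gamma=\gamma_\varepsilon$ furnished by (ii), replacing $u$ by $\phi$ yields $\phi(x,t)\ge\int_0^\delta L(\gamma(\sigma),|\gamma'|(\sigma))\,d\sigma+\phi(\gamma(\delta),t-\delta)-\varepsilon$ for every $\delta\in[0,t)$. Since a fixed $\varepsilon$ cannot simply be divided out as $\delta\to 0+$, I would instead let $\varepsilon\to 0$ and extract a limiting initial speed $v^\ast=\lim|\gamma_\varepsilon'|(0+)$; the lower bound \eqref{L-lower} together with the superlinearity \eqref{L-cocercive} confines these speeds to a fixed bounded set, since a near-optimal curve cannot move too fast without incurring an excessive Lagrangian cost, so such a limit exists. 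Passing to the limit and using that the directional derivative of $\phi$ along $\gamma_\varepsilon$ is bounded below by $-v^\ast|\nabla\phi|(x,t)$, one obtains $\partial_t\phi(x,t)\ge L(x,v^\ast)-v^\ast|\nabla\phi|(x,t)\ge\inf_{q\ge 0}\{L(x,q)-q|\nabla\phi|(x,t)\}=-H(x,|\nabla\phi|(x,t))$ through \eqref{legendre}, which is exactly the desired supersolution inequality.

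The reverse implication (i) $\Rightarrow$ (ii) is the heart of the matter and the step I expect to be hardest: from the infinitesimal supersolution property one must manufacture a single curve $\gamma\in\mathcal{A}_x(\X)$ that is near-optimal uniformly in $\delta$. I would build $\gamma$ by a greedy, recursive scheme on a fine partition $0=\sigma_0<\sigma_1<\cdots$ of $[0,t)$: at each node the supersolution property at the current space-time point produces a constant-speed segment along which $u$ decreases by at least the integral of $L$ minus a controlled error, and these segments are concatenated. Three technical points must be handled: (a) speeds must be selected in the finite-cost regime of $L$ and kept uniformly bounded, which is where \eqref{L-cocercive} and \eqref{L-lower} are essential, since in the genuinely infinite-valued case illustrated by \eqref{eikona-hl} the admissible speeds are confined to the effective domain of $L$; (b) the accumulated discretization error must be summable to less than $\varepsilon$, controlled by the local uniform continuity of $L$ in its spatial argument from (H4); and (c) the concatenated segments must assemble into a genuine absolutely continuous curve with piecewise-constant speed and piecewise-continuous $L$, which relies on completeness of $(\X,d)$ and the length-space structure to produce short connecting curves of prescribed speed between consecutive nodes.

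The main obstacle is therefore the selection-and-concatenation argument in (i) $\Rightarrow$ (ii): one must make a greedy choice of near-optimal directions at each scale, control the error as the partition is refined while simultaneously keeping the total tolerance at $\varepsilon$, and ensure that the limiting object lies in the admissible class $\mathcal{A}_x(\X)$. By comparison, the differentiation-and-duality argument underlying (ii) $\Rightarrow$ (i) is routine once the speed bounds from coercivity are in hand; its only subtlety is that the $-\varepsilon$ in (ii) forbids naive division by $\delta$ and forces the limiting-speed extraction described above in place of a direct pointwise differentiation.
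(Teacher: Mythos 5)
First, a structural point: the paper contains no proof of this proposition. It is imported verbatim from \cite{Na1} (Proposition 3.2 there), and the authors explicitly decline even to state the original definition of curve-based viscosity supersolution (``We omit the original definition of curve-based solutions since it involves more notations and technical details''). So there is no internal argument to compare your proposal against; what you have written is an attempted reconstruction of Nakayasu's proof. That already exposes the first gap: your proof is built on a paraphrased guess of what the curve-based supersolution definition says (``a one-dimensional viscosity supersolution condition is imposed on the compositions $w(s,t)=u(\gamma(s),t)$''), but the equivalence with (ii) depends delicately on the exact quantifier structure of that definition --- in particular on whether the supersolution half carries an existential quantifier over curves. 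A proof that never pins the definition down cannot be checked, and in one direction it becomes circular (see below).

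On the substance, both directions have concrete problems. For (ii) $\Rightarrow$ (i): after replacing $u$ by the test function you have, for each $\varepsilon$, a curve $\gamma_\varepsilon$ and the inequality for all $\delta$; you then propose to ``let $\varepsilon\to 0$,'' extract a limiting initial speed, and ``pass to the limit.'' Passing to the limit in that inequality at fixed $\delta$ requires the points $\gamma_\varepsilon(\delta)$ to converge along a subsequence, and in a general complete length space closed balls are not compact, so no such limit exists; the speeds $|\gamma_\varepsilon'|(0+)$ do subconverge by coercivity, but the inequality involves curve positions, not only speeds. The standard repair avoids limits of curves altogether: either first deduce from (ii) the $\delta$-wise dynamic programming inequality $u(x,t)\geq \inf_{\gamma}\{\int_0^\delta L\,d\sigma+u(\gamma(\delta),t-\delta)\}$ (the infimum absorbs the curve, so $\varepsilon\to0$ is harmless at fixed $\delta$) and then pick $\delta$-dependent curves with error $\delta^2$, or couple the parameters (e.g. $\delta=\sqrt{\varepsilon}$); after that, a slope bound on the test function, Jensen's inequality, and the duality \eqref{legendre} close the argument exactly as in the paper's Propositions \ref{prop gen-monge2} and \ref{semi-eqiuv_1} for the analogous Monge and slope-based statements. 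For (i) $\Rightarrow$ (ii), which you correctly identify as the heart, your greedy scheme rests on the claim that ``the supersolution property at the current space-time point produces a constant-speed segment along which $u$ decreases by at least the integral of $L$ minus a controlled error.'' A viscosity supersolution property stated via test functions, or via compositions $u\circ\gamma$ for \emph{all} curves $\gamma$, is a universally quantified infinitesimal statement and does not by itself produce any curve; extracting a descent segment from it is precisely the content of the implication being proven, so as written this step assumes the conclusion. In \cite{Na1, GHN} this is where the specific existential form of the curve-based definition and a genuine concatenation-plus-stability argument enter. Finally, note that (ii) demands a \emph{single} curve that is $\varepsilon$-optimal uniformly over all $\delta\in[0,t)$, which is strictly stronger than a $\delta$-wise near-optimal selection; your sketch controls the error at each node but never addresses why the assembled curve serves all $\delta$ simultaneously, nor why the construction reaches every $\delta<t$ without stalling.
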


Moreover, an arcwise continuous function $u$ in $\X\times(0,T)$ is said to be a curve-based viscosity solution of \eqref{eq time-ind} if $u$ is a curve-based subsolution and a curve-based supersolution of \eqref{eq time-ind}. In other words, $u$ is a curve-based viscosity solution of \eqref{eq time-ind} if and only if for any $(x, t)\in \X\times (0, T)$ and any $h\in [0, t)$, there holds
\begin{equation}\label{c-sol dpp}
u(x,t)=\inf_{\gamma\in \mathcal{A}_x(\X)}\left\{\int_0^\delta L(\gamma(\sigma),|\gamma'|(\sigma))d\sigma +u(\gamma(\delta), t-\delta)\right\}.
\end{equation}
Such a definition of solutions naturally leads us to the following solution formula for the associated initial value problem:
\begin{equation}\label{value time-ind}
u(x,t)=\inf_{\gamma\in \mathcal{A}_x(\X)}\left\{\int_0^t L(\gamma(\sigma),|\gamma'|(\sigma))d\sigma +u_0(\gamma(t))\right\}    \quad \text{for $(x, t)\in \X\times [0, T)$,}
\end{equation}
where $u_0\in C(\X)$ is the initial value as in \eqref{initial}. 
In fact, it is shown in \cite[Theorem~4.2]{Na1} that $u$ defined by \eqref{value time-ind} is the unique curve-based viscosity solution of \eqref{eq time-ind} and \eqref{initial} provided that $u_0$ is bounded uniformly continuous in $\X$ and $H\in C(\X\times [0, \infty))$ satisfies a set of assumptions similar to our conditions (H1)--(H3) for time-independent $H$. The assumptions imposed on $H$ in \cite{Na1} are actually weaker than (H1)--(H3), and the well-posedness result applies to both \eqref{eq special0} and the case of superlinear Hamiltonians. The uniqueness of solutions is a consequence of a comparison theorem given in \cite[Theorem~4.8]{Na1}. 

It is possible to study the more general equation \eqref{eq general} by extending \eqref{value time-ind} in a natural way to the case of Hamiltonians that depend also on the time variable. This is related to our discussions in Section~\ref{sec:eikonal-exist} and Section~\ref{sec:suplin-exist}.

\subsection{Slope-based viscosity solutions}
Let us next review the definition and some properties of slope-based solutions. We refer to \cite{AF, GaS2, GaS} for more details about the notion and related discussions.  
We first recall the definitions of subsolution and supersolution test functions for the slope-based solutions of \eqref{eq general}. 

\begin{defi}[\cite{GaS}]\label{def s-sol}
	A function \(\psi: \X\times(0,T)\to\mathbb{R}\) is called a subsolution test function if \(\psi(x,t)=\psi_1(x,t)+\psi_2(x,t)\) for any $(x, t)\in  \X\times (0, T)$, where  \(\psi_1,\psi_2\) are locally Lipschitz, \(|\nabla\psi_1|=|\nabla^-\psi_1|\) is continuous, and \(\partial_t\psi_1,\partial_t\psi_2\) are continuous in $\X\times (0, T)$. We denote by $\underline{\mathcal{C}}$ the class of subsolution test functions. 
    A function \(\psi:\X\times(0,T) \to\mathbb{R}\) is called a supersolution test function if $-\psi\in \underline{\mathcal{C}}$. We denote by $\overline{\mathcal{C}}$ the class of supersolution test functions.
\end{defi}
For a function $\psi:\X\times(0,T)\to\mathbb{R}$ we will write $\psi^*$ to denote its upper semicontinuous envelope, i.e.
\[
\psi^*(x,t)=\limsup_{(y,s)\to(x,t)}\psi(y,s),\quad (x,t)\in \X\times(0,T).
\]
We define 
\[  H(x, t, p):=H(x, t, 0),\qquad \text{for all}\ x\in\X, t\in (0, T), \ p<0.\]
For any $(x, t, p)\in \X\times (0, T)\times  \R$ and $a\geq 0$, denote
\begin{equation}\label{H-extension}
H_a(x, t, p) = \inf_{|q-p| \le a}H(x, t, q), \quad H^a(x, t, p) = \sup_{|q-p| \le a}H(x, t, q). 
\end{equation}
If $p\mapsto H(x, t, p)$ is nondecreasing, we  see that
\[
H_a(x,t,  p) = H(x,t, p-a), \quad H^a(x,t,  p) = H(x,t,  p+a).
\]
Next, we introduce the definition of slope-based viscosity solution of \eqref{eq general}.

\begin{defi}\label{def metric special}
	A locally bounded upper semicontinuous function \(u: \X\times(0,T) \to\mathbb{R}\) is a slope-based viscosity subsolution of \eqref{eq general} if  
	\[
		\partial_t\psi(x,t)+H_{|\nabla\psi_2|^*(x,t)}(x, t, |\nabla\psi_1|(x,t))\leq 0
    \]
	whenever \(u-\psi\) has a local maximum at \((x,t)\in\X\times(0,T)\) for \(\psi\in \underline{\mathcal{C}}\). 
	A locally bounded lower semicontinuous function \(u: \X\times(0,T)\to\mathbb{R}\) is a slope-based viscosity supersolution of \eqref{eq general} if  
    \[
	\partial_t\psi(x,t)+H^{|\nabla\psi_2|^*(x,t)}(x, t, |\nabla\psi_1|(x,t))\geq 0
    \]
	whenever \(u-\psi\) has a local minimum at \((x,t)\in\X\times(0,T)\) for \(\psi\in \overline{\mathcal{C}}\). Moreover, $u\in C(\X\times(0,T))$ is a slope-based viscosity solution of \eqref{eq general} if it is a slope-based viscosity subsolution and a slope-based viscosity supersolution of \eqref{eq general}. 
\end{defi}

Uniqueness and existence of slope-based solutions to the initial value problem \eqref{eq general}-\eqref{initial} is given in \cite{GaS}. 
In particular, comparison principles are established separately for sublinear and superlinear Hamiltonians under different sets of assumptions. For our purpose here, we include a comparison principle below for the eikonal-type equation \eqref{eq special}. It can be derived from \cite[Proposition 3.3]{GaS}, which applies to possibly unbounded slope-based solutions to general sublinear Hamilton-Jacobi equations.

\begin{thm}[Comparison principle for slope-based solutions to eikonal-type equation]\label{thm:comparison-eikonal}
    Let $(\X, d)$ be a complete length space and $T>0$. Assume that $f: \X\times (0, T)\to \R$ is uniformly continuous. Let $u$ be a bounded slope-based viscosity subsolution of \eqref{eq special} and $v$ be a bounded slope-based viscosity supersolution of \eqref{eq special}. If $u_0:\X\to \R$ is uniformly continuous and 
    \[
     \limsup_{t\to 0+}\sup_{x\in \X}(u(x, t)-u_0(x))\leq 0,\ \quad \liminf_{t\to 0+}\inf_{x\in \X}(v(x, t)-u_0(x))\geq 0,
    \]
    then $u\leq v$ in $\X\times [0, T)$. 
\end{thm}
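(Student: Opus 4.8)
The plan is to deduce the statement as a special case of the general comparison principle for sublinear Hamilton--Jacobi equations in \cite[Proposition~3.3]{GaS}. The equation \eqref{eq special} is exactly \eqref{eq general} with Hamiltonian $H(x,t,p)=p-f(x,t)$, which is convex, nondecreasing, and of linear (hence sublinear) growth in $p$. The first task is therefore to verify that this particular $H$, together with the regularity of $f$, meets the structural hypotheses demanded by the cited comparison result; once this is done, the conclusion $u\le v$ is immediate.

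First I would record the structure condition in $(x,t)$. Since $\partial_p H\equiv 1$, all the $(x,t)$-dependence enters additively through $-f$, so for all $x,y\in\X$, $t,s\in(0,T)$, and $p\ge 0$,
\[
|H(x,t,p)-H(y,s,p)|=|f(x,t)-f(y,s)|\le \omega_f(d(x,y)+|t-s|),
\]
where $\omega_f$ is the modulus of continuity supplied by the assumed uniform continuity of $f$. This bound is uniform in $p$, hence stronger than the gradient-dependent structure condition typically required for sublinear Hamiltonians. Next, since $u$ and $v$ are bounded, they belong a fortiori to the (possibly unbounded) admissible class to which \cite[Proposition~3.3]{GaS} applies, and the two one-sided limit hypotheses as $t\to 0+$ provide precisely the ordered initial-trace information needed to run the comparison up to the initial time. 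With these ingredients in place, \cite[Proposition~3.3]{GaS} yields $u\le v$ in $\X\times[0,T)$.

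For completeness I would also recall the mechanism behind the cited result, since it locates the genuine difficulty. The argument doubles variables in both space and time, studying
\[
u(x,t)-v(y,s)-\frac{d(x,y)^2}{2\vep}-\frac{(t-s)^2}{2\vep}
\]
together with a small localizing term, and exploits that in a length space the penalizations $x\mapsto d(x,y)^2$ and $t\mapsto (t-s)^2$ have continuous slopes, so they qualify as admissible components of the subsolution and supersolution test functions of Definition~\ref{def metric special}. Using the monotonicity of $H$ one has $H_a(x,t,p)=H(x,t,p)-a$ and $H^a(x,t,p)=H(x,t,p)+a$, so the gradient penalizations cancel in the limit $\vep\to 0$, the time penalization controls $\partial_t\psi$, and $\omega_f$ absorbs the residual $(x,t)$-discrepancy, contradicting $\sup(u-v)>0$. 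The main obstacle, and the reason the Euclidean proof cannot simply be transcribed, is that $\X$ need not be locally compact, so a maximizer of the doubled functional may fail to exist; this is resolved in \cite{GaS} by a variational principle of Ekeland--Borwein--Preiss type that produces approximate maximizers at which the two viscosity inequalities can be combined. Since this is already carried out at the required generality there, the proof reduces to the hypothesis check above.
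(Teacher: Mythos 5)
Your proposal is correct and takes essentially the same route as the paper: the paper gives no proof of this theorem at all, stating only that it ``can be derived from'' \cite[Proposition 3.3]{GaS}, which applies to possibly unbounded slope-based solutions of general sublinear Hamilton--Jacobi equations---precisely the reduction you carry out. Your explicit checks (the structure condition $|H(x,t,p)-H(y,s,p)|=|f(x,t)-f(y,s)|\le\omega_f(d(x,y)+|t-s|)$ uniform in $p$, the sublinear growth of $H(x,t,p)=p-f(x,t)$, and the combination of the two initial-trace inequalities into the ordered trace condition needed to run the comparison) are in fact more detailed than anything the paper records for this statement.
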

Let us include another comparison result for the case of superlinear Hamiltonians independent of $t$, which is a simplified version of \cite[Theorem~4.2]{GaS}. 

\begin{thm}[Comparison principle for slope-based solutions for superlinear Hamiltonians]\label{unique_slope}
    Let $(\X, d)$ be a complete length space and $T>0$. Fix $x_0\in \X$. Let 
    \begin{equation}\label{special-H}
H(x,p)=\tilde{H}(x,p)-f(x),\qquad \text{for}\ x\in\X,\; p\in[0,\infty), 
\end{equation}
 where $\tilde{H}: \X\times \R\to \R$ is assumed to satisfy the following conditions. 
    \begin{itemize}
        \item[(1)] $\tilde{H}$ is uniformly continuous on bounded subsets of $\X\times [0, \infty)$, and for every $x\in\X$, $p\mapsto \tilde{H}(x, p)$ is convex and nondecreasing in $[0, \infty)$.
        \item[(2)] There exist $C_2\geq C_1>0,\;\alpha>1$ such that for every $(x,p)\in \X\times[0,\infty)$
        \begin{equation}\label{special-H-est1}
        C_1p^\alpha\leq \tilde{H}(x,p)\leq C_2p^\alpha.
        \end{equation}
        \item[(3)] There exist $h, \ol{h}: (0, \infty)\to \R$ and $s_0>1$ such that $h(s)>1$ for $1<s<s_0, \overline{h}(s)\to 1$ as $s\to 1$, and 
        \[ s h(s)\tilde{H}(x,p)\leq \tilde{H}(x, sp)\leq \overline{h}(s)\tilde{H}(x,p) \]
        for all $s>0$ and $(x,p)\in \X\times[0,\infty)$. 
        \item[(4)] There is a modulus of continuity $\tilde{\omega}$ such that 
        \begin{equation}\label{special-H-est2}
        |\tilde{H}(x,p)-\tilde{H}(y,p)|\leq \tilde{\omega}(d(x,y))(1+p^\alpha)
        \end{equation}
        holds for all  $(x,y,p)\in\X\times\X\times[0,\infty)$. 
    \end{itemize}
    Assume in addition that $u_0: \X\to \R$ is bounded Lipschitz and $f: \X\to \R$ is bounded uniformly continuous. Let $u$ be a bounded slope-based viscosity subsolution of \eqref{eq time-ind} and $v$ be a slope-based viscosity supersolution of \eqref{eq time-ind} satisfying
    \[
    \lim_{t\to 0}\sup_{\X}\{u(x,t)-v(x,t)\}\leq 0,
    \]
    then $u\leq v$ on $\X\times(0,T)$.
\end{thm}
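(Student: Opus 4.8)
The plan is to argue by contradiction through a doubling-of-variables scheme adapted to the metric framework of \cite{AF, GaS}, using Ekeland's variational principle to compensate for the possible lack of local compactness of $\X$. Suppose, contrary to the assertion, that
\[
\theta_0 := \sup_{\X\times (0,T)}(u-v) > 0.
\]
The hypothesis $\lim_{t\to 0}\sup_\X(u-v)\le 0$ together with the boundedness of $u$ and $v$ guarantees that any near-maximal behaviour of $u-v$ stays away from the initial time. The first reduction is to replace $u$ by a \emph{strict} subsolution. Here the scaling hypothesis (3) is decisive: rescaling $u$ to $u_\lambda:=\lambda u$ with $\lambda<1$ close to $1$ and invoking the two-sided bound $s\,h(s)\tilde H(x,p)\le \tilde H(x,sp)\le \ol h(s)\tilde H(x,p)$ yields a strictly negative defect, so that $u_\lambda$ satisfies the subsolution inequality with a gap $\eta=\eta(\lambda)>0$, while $\sup(u_\lambda-v)>0$ persists for $\lambda$ near $1$.

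Next I would introduce the doubled functional
\[
\Phi_{\vep,\beta}(x,y,t,s)=u_\lambda(x,t)-v(y,s)-\frac{d(x,y)^2}{2\vep}-\frac{(t-s)^2}{2\beta},
\]
and seek to maximize it over $\X\times\X\times(0,T)\times(0,T)$. Since the supremum need not be attained, I would apply Ekeland's variational principle on the complete product space to produce a point $(x_\ast,y_\ast,t_\ast,s_\ast)$ at which $\Phi_{\vep,\beta}$, perturbed by an additional term of slope at most $\delta$, is genuinely maximized. The admissible class of Definition~\ref{def s-sol} is tailored to this situation: the maps $x\mapsto d(x,y_\ast)^2/(2\vep)$ and $y\mapsto d(x_\ast,y)^2/(2\vep)$ have continuous slope $d(x_\ast,y_\ast)/\vep$ and thus serve as the $\psi_1$ component, whereas the small Ekeland perturbation (and any spatial localizer one adds) enters as $\psi_2$, contributing only an $O(\delta)$ correction to the momentum.

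Applying the subsolution inequality for $u_\lambda$ at $(x_\ast,t_\ast)$ and the supersolution inequality for $v$ at $(y_\ast,s_\ast)$, the common time-penalty $(t_\ast-s_\ast)/\beta$ cancels upon subtraction and one is left, after accounting for the modulus of $f$, with an estimate of the form
\[
\eta \le \omega_f(d(x_\ast,y_\ast))+\bigl|\tilde H(x_\ast,p_\ast)-\tilde H(y_\ast,p_\ast)\bigr|+o_\delta(1),\qquad p_\ast:=\frac{d(x_\ast,y_\ast)}{\vep}.
\]
The superlinear lower bound in (2) is what bounds the momentum: since the sub/supersolution inequalities force $\tilde H(x_\ast,p_\ast)$ to stay bounded, the estimate $C_1 p_\ast^\alpha\le \tilde H(x_\ast,p_\ast)$ gives $p_\ast\le (C/C_1)^{1/\alpha}$, whence $d(x_\ast,y_\ast)\le (C/C_1)^{1/\alpha}\vep\to 0$. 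Then hypothesis (4) bounds the Hamiltonian difference by $\tilde\omega(d(x_\ast,y_\ast))(1+p_\ast^\alpha)\to 0$. Sending $\beta\to 0$, then $\vep\to 0$ and $\delta\to 0$, the right-hand side tends to $0$ while the left-hand side keeps the strictly positive lower bound $\eta$, a contradiction. Letting $\lambda\to 1$ finally recovers $u\le v$.

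The principal obstacle is the absence of local compactness: the penalized supremum is in general not achieved, so the argument must be routed entirely through Ekeland's variational principle (or an equivalent perturbed-optimization device), and one must verify that the resulting perturbation has small enough slope to be absorbed into the admissible $\psi_2$ of Definition~\ref{def s-sol} without destroying the continuity of $|\nabla\psi_1|$. Intertwined with this is the superlinear regime: closing the estimate hinges on using the scaling structure (3) to manufacture the strict gap $\eta$ and the power bound (2) to force the doubling points $x_\ast,y_\ast$ to coalesce at a rate that neutralizes the $x$-dependence of $\tilde H$ measured by (4). Verifying the precise penalization estimates $d(x_\ast,y_\ast)^2/\vep\to 0$ and $(t_\ast-s_\ast)^2/\beta\to 0$ in the Ekeland-perturbed setting is the most delicate bookkeeping.
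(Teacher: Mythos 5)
First, a point of reference: the paper itself contains no proof of Theorem~\ref{unique_slope}. It is imported from \cite[Theorem~4.2]{GaS}, with hypotheses strengthened to global ones, as the remark immediately following the statement explains. So the only benchmark is the proof in \cite{GaS}, and at the level of architecture your sketch matches it: doubling of variables adapted to the metric setting, Ekeland's variational principle in place of local compactness, distance-squared penalizations serving as the $\psi_1$ component and the Ekeland perturbation as $\psi_2$, and a rescaling that exploits condition (3).

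However, the two steps of your sketch that are supposed to exploit the superlinear structure both fail as stated. \emph{(i) The momentum bound.} You claim the sub/supersolution inequalities force $\tilde H(x_*,p_*)$ to stay bounded, hence $p_*\le (C/C_1)^{1/\alpha}$. They do not: each of the two inequalities contains the time-penalty term $(t_*-s_*)/\beta$, for which the doubling estimates give only $|t_*-s_*|\le C\sqrt{\beta}$, i.e.\ a contribution of size $\beta^{-1/2}$ that blows up as $\beta\to 0$; and after subtracting the two inequalities these terms cancel, so that only the \emph{difference} of the Hamiltonians at the two points is controlled, never either one separately. Since neither $u$ nor $v$ is assumed Lipschitz (only $u_0$ is), the penalization yields only $d(x_*,y_*)\le C\sqrt{\vep}$, so $p_*=d(x_*,y_*)/\vep$ may be of size $\vep^{-1/2}$, and the error $\tilde\omega(d(x_*,y_*))(1+p_*^\alpha)$ from \eqref{special-H-est2} has no reason to vanish. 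Note also that your order of limits ($\beta\to 0$ before $\vep\to 0$) is incompatible even with your own claimed bound, which degenerates as $\beta\to 0$. \emph{(ii) The strict subsolution.} Rescaling $u_\lambda=\lambda u$ and using (3) with $s=1/\lambda$ gives, at a test point, a defect of the form $-(h(1/\lambda)-1)\,\tilde H(x,|\nabla\psi_1|)+(1-\lambda)\sup_\X|f|$ (up to the $|\nabla\psi_2|^*$ corrections). This is proportional to $\tilde H\sim p^\alpha$, so it is \emph{not} bounded above by a negative constant $-\eta(\lambda)$; it is in fact positive wherever the momentum is small, so $u_\lambda$ need not even remain a subsolution, and the rescaling additionally perturbs the initial condition by $(1-\lambda)\sup_\X|u|$.

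These two gaps are not incidental: they sit exactly where hypotheses (2)--(4) must interact, and in a correct argument the roles are the reverse of what you propose. A \emph{constant} gap has to be produced by a perturbation that is strict in the time direction (for instance $u-\eta/(T-t)$, the very device this paper uses in its own Monge comparison proofs), while the $p^\alpha$-proportional defect coming from the scaling condition (3) is used to \emph{absorb} the unbounded factor $(1+p_*^\alpha)$ in the modulus estimate \eqref{special-H-est2}: once $d(x_*,y_*)$ is small enough that $\tilde\omega(d(x_*,y_*))<(h(1/\lambda)-1)C_1$, the $p_*^\alpha$ terms on the right are dominated by the defect on the left, and only then does the constant gap produce the contradiction. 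Finally, the hypothesis that $u_0$ is Lipschitz never enters your argument at all; an explicitly stated assumption going unused is a further sign that the sketch, as it stands, is missing a necessary ingredient.
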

In the original statement of \cite[Theorem~4.2]{GaS}, the upper bound of $\tilde{H}$ in \eqref{special-H-est1} and the uniform continuity in \eqref{special-H-est2} are allowed to depend also on the bounds of $d(x, x_0)$ and $d(x, y_0)$. The function $f$ is also assumed to be uniformly continuous only locally rather than in the entire $\X$. We strengthen all these assumptions here for simplicity and convenience in our later applications. 

With the initial condition \eqref{initial} interpreted as \eqref{initial-conti0}, 
we also have the existence of a unique slope-based viscosity solution of the initial value problem for \eqref{eq general}. The existence result for sublinear Hamiltonians is obtained by adapting Perron's method to the metric setting; see \cite[Theorem~3.4]{GaS}. For the case of superlinear Hamiltonians, an optimal control interpretation is used to construct the unique slope-based solution to the initial value problem. In fact, under appropriate assumptions on $H$ and  the associated Lagrangian $L$ given by \eqref{legendre0}, the slope-based solution is still represented by the formula \eqref{value time-ind} with $\mathcal{A}_x(\X)$ replaced by the class of absolutely continuous functions $\gamma$ on $[0, \infty)$ satisfying $\gamma(0)=x$. The assumptions on $H$ and $L$ are closely related to our conditions (H1)--(H4) and cover the typical example considered in Theorem~\ref{unique_slope}. We omit the detailed description of these assumptions and refer the reader to \cite[Section~4.2]{GaS}. See also \cite[Section~3.3]{AF} for the control-theoretic approach under different assumptions on $H$.




\section{The case of eikonal-type equations}\label{sec:eikonal}


\subsection{Definition of Monge solutions}\label{sec:eikonal-monge-def}

Let $T>0$. Let us study \eqref{eq special} under the assumption that $(\X, d)$ is a complete length space. Below we denote $\Y_T=\X\times (0, T)$ for simplicity of notation. For our new notion of solutions, we consider the following function classes. 
For $k\geq 0$, let $S_k(Y_T)$ be defined as in \eqref{sk}. 
We also take $S(\Y_T)=\bigcup_{k\geq 0} S_k(\Y_T)$. 
It is clear that $S_0(\Y_T)$ denotes the set of all functions $u\in \lipl(\Y_T)$ that are nondecreasing in $t$. Moreover, any function $u\in \lipl(\Y_T)$ belongs to $S_k(\Y_T)$ for $k\geq 0$ provided that 
\[
|u(x, t)-u(x, s)|\leq k|t-s| \quad \text{for all $x\in \X$ and $t, s\in [0, T)$}.    
\]
In particular, we have $\lip(\Y_T)\subset S(\Y_T)$. 

Suppose that $u\in S_k(\Y_T)$ with $k\geq 0$ satisfying 
\begin{equation}\label{rhs-positive}
k+\inf_{\Y_T} f\geq 0.    
\end{equation}
Letting $v$ be as in \eqref{unknown-change}, 
we see that formally $v$ should solve \eqref{eq special2}, whose right hand side is 
nonnegative thanks to \eqref{rhs-positive}. Let us now introduce the definition of Monge solutions of \eqref{eq special}. As mentioned in the introduction, our notion of Monge solutions is based on the subslope $|D^-v|$ of $v$ in space-time as defined in \eqref{def D-0}. 
By the monotonicity of $v$ in time, it is not difficult to see that $|D^-v|$ in \eqref{def D-0} is equivalent to the following form:
\begin{equation}\label{def D-}
|D^- v|(x, t)=\limsup_{\delta \to 0+}\sup\left\{\frac{v(x, t)-v(y, t-\delta)}{\delta}: y\in \X,\ d(x, y)\leq \delta\right\}.     
\end{equation}
Let us use this simple equivalent expression in our definition below.

\begin{defi}\label{def monge special}
A function $u\in S(\Y_T)$ is said to be a Monge solution (resp., Monge subsolution, Monge supersolution) of \eqref{eq special} if there exists $k\geq 0$ satisfying \eqref{rhs-positive} such that $u\in S_k(\Y_T)$, and $v\in S_0(\Y_T)$ given by \eqref{unknown-change} satisfies  
\begin{equation}\label{subslope-1}
 |D^-v|(x, t)= f(x, t)+k\quad \text{(resp., $\leq$, $\geq$)}    
\end{equation}
for any $(x, t)\in \Y_T$, where $|D^-v|(x, t)$ is given as in \eqref{def D-}. 
\end{defi}
Note that when $u\in S_k(\Y_T)$ is a Monge solution (resp., Monge subsolution, Monge supersolution) as defined above, the function $v$ in \eqref{unknown-change} is a Monge solution (resp., Monge subsolution, Monge supersolution) of \eqref{eq special2}. 
In fact, for the same $k\geq 0$, it is easily seen that $v\in S_0(\Y_T)$ and $v$ itself already satisfies \eqref{subslope-1}.

\begin{rmk}\label{rem1}
    It is worth pointing out that, since $v\in S_0(\Y_T)$,  we have $|D^- v|(x, t)\geq 0$ at each $(x, t)\in \Y_T$ by considering the limit in \eqref{def D-} via $y=x$ as $\delta\to 0+$.     
    It follows that, for all  $(x, t)\in \Y_T$, we have 
    \begin{equation}\label{def D-2}
    |D^- v|(x, t)=\limsup_{\delta \to 0+}\sup\left\{\frac{v(x, t)-v(y, s)}{\delta}: (y,s)\in \Y_T, \ 0\leq t-s\leq \delta,\ d(x, y)\leq \delta\right\}     
    \end{equation}
    as well as
    \begin{equation}\label{def D-3}
    |D^- v|(x, t)= \limsup_{\delta \to 0+}\sup\left\{\frac{\max\{v(x, t)-v(y, s), 0\}}{\delta}: (y,s)\in \Y_T, \ 0\leq t-s,\ d(x, y)\leq \delta\right\}. 
    \end{equation}
    Moreover, one can relax the condition 
    \begin{equation}\label{D-cond1}
    0\leq t-s\leq \delta,\quad  d(x, y)\leq \delta    
    \end{equation}
    for the supremum in \eqref{def D-2} and \eqref{def D-3} to
    \begin{equation}\label{D-cond2}
     |t-s|\leq \delta,\quad  d(x, y)\leq \delta
    \end{equation}
    for $\delta<t$. To see this, letting $Q_v(x, t, \delta)=(v(x, t)-v(y, s))/ \delta$,
    we have
    \begin{equation}\label{D-cond3}
    \sup\left\{Q_v(x, t, \delta): \text{$(y, s)$ satisfies \eqref{D-cond1}}\right\}\leq \sup\left\{Q_v(x, t, \delta): \text{$(y, s)$ satisfies \eqref{D-cond2}}\right\}.    
    \end{equation}
On the other hand, if $(y, s)$ fulfills \eqref{D-cond2} with $s>t$, then 
by the monotonicity of $v$ in time, we have
\[
v(x, t)-v(y,t)\geq  v(x, t)-v(y, s), 
\]
which yields 
\begin{equation}\label{D-cond4}
 \sup\left\{Q_v(x, t, \delta): \text{$(y, s)$ satisfies \eqref{D-cond1}}\right\}\geq \sup\left\{Q_v(x, t, \delta): \text{$(y, s)$ satisfies \eqref{D-cond2}}\right\}.
\end{equation}
Combining \eqref{D-cond3} and \eqref{D-cond4}, we obtain 
\[
\sup\left\{Q_v(x, t, \delta): \text{$(y, s)$ satisfies \eqref{D-cond1}}\right\}= \sup\left\{Q_v(x, t, \delta): \text{$(y, s)$ satisfies \eqref{D-cond2}}\right\}.
\]
Hence, we are led to 
\[
|D^- v|(x, t)=\limsup_{\delta \to 0+}\sup\left\{\frac{v(x, t)-v(y, s)}{\delta}: (y,s)\in \Y_T, \ |t-s|,\; d(x, y)\leq \delta\right\}.     
\]
By a similar argument, we can replace the condition \eqref{D-cond1} in \eqref{def D-3} by \eqref{D-cond2}. Hence, our expressions of $|D^-v|$ in \eqref{def D-}, \eqref{def D-2} and \eqref{def D-3} are all equivalent to that introduced in \eqref{def D-0}. 
\end{rmk}

In Definition \ref{def monge special}, the conditions required on $u$ involve the constant $k\geq 0$. Our definition, however, does not depend on the choice of any particular $k$. The following result clarifies this important issue. 

\begin{lem}\label{ind_of_k}
Let $(\X,d)$ be a complete length space and $T>0$. Let $\Y_T=\X\times (0, T)$. Assume that $f: \Y_T\to \R$ is bounded below. If $u\in S(\Y_T)$ is a Monge solution (resp., Monge subsolution, Monge supersolution) of \eqref{eq special}, then $v$ in \eqref{unknown-change} satisfies \eqref{subslope-1} for any $k\geq 0$ such that $u\in S_k(\Y_T)$ and \eqref{rhs-positive} hold. 
\end{lem}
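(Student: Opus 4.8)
The plan is to exploit the simple affine dependence of $v$ on the parameter $k$. Suppose $u\in S(\Y_T)$ is a Monge solution (the subsolution and supersolution cases are handled identically, replacing the equality below by the corresponding inequality). By Definition~\ref{def monge special}, there exists some $k_0\geq 0$ with $u\in S_{k_0}(\Y_T)$ and \eqref{rhs-positive} holding for $k_0$, such that $v_0(x,t)=u(x,t)+k_0 t$ satisfies $|D^-v_0|(x,t)=f(x,t)+k_0$ for all $(x,t)\in \Y_T$. Now fix an arbitrary $k\geq 0$ for which $u\in S_k(\Y_T)$ and \eqref{rhs-positive} hold, and set $v(x,t)=u(x,t)+kt$ as in \eqref{unknown-change}. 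The goal is to show that $v$ satisfies \eqref{subslope-1} with this $k$.

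First I would record that both $v_0$ and $v$ lie in $S_0(\Y_T)$. Indeed, $u\in S_{k_0}(\Y_T)$ gives, for $0<s<t<T$,
\[
v_0(x,t)-v_0(x,s)=u(x,t)-u(x,s)+k_0(t-s)\geq 0,
\]
by the defining inequality of $S_{k_0}(\Y_T)$ in \eqref{sk}, and the same computation with $k$ in place of $k_0$ shows $v\in S_0(\Y_T)$. Hence the simplified expression \eqref{def D-} for the subslope is available for both functions.

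The key step is the identity $v=v_0+(k-k_0)t$, which for every $\delta>0$ and every $y\in\X$ yields
\[
\frac{v(x,t)-v(y,t-\delta)}{\delta}=\frac{v_0(x,t)-v_0(y,t-\delta)}{\delta}+(k-k_0).
\]
Since the added term $(k-k_0)$ does not depend on $y$, taking the supremum over $y$ with $d(x,y)\leq \delta$ and then the limit superior as $\delta\to 0+$ commutes with this constant translation, giving
\[
|D^-v|(x,t)=|D^-v_0|(x,t)+(k-k_0),\qquad (x,t)\in \Y_T.
\]
Combining this with $|D^-v_0|(x,t)=f(x,t)+k_0$ produces $|D^-v|(x,t)=f(x,t)+k$, which is exactly \eqref{subslope-1} for the parameter $k$.

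There is no genuine obstacle here: the result is a direct consequence of the linearity of the difference quotient in the time-linear term $(k-k_0)t$ together with the commutation of $\sup$ and $\limsup$ with addition of a constant. The only point requiring minor care is the verification that both $v_0$ and $v$ belong to $S_0(\Y_T)$, so that the representation \eqref{def D-} applies to each; as shown above this follows at once from \eqref{sk}. For the subsolution and supersolution variants the same chain of equalities holds with the final equality replaced by $\leq$ or $\geq$, since adding the constant $(k-k_0)$ to both sides preserves the direction of the inequality.
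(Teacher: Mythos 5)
Your proof is correct and follows essentially the same route as the paper's: both exploit the identity $v=v_0+(k-k_0)t$ to shift the difference quotients by the constant $(k-k_0)$, then pass the supremum and limsup through this constant to conclude $|D^-v|=|D^-v_0|+(k-k_0)=f+k$. Your additional verification that $v_0, v\in S_0(\Y_T)$ (so that the expression \eqref{def D-} applies) is a detail the paper dispatches with "it is easily seen," but it is the same argument.
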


\begin{proof}
Suppose that $u$ is a Monge solution of \eqref{eq special}. Then, there exists $k_0\geq 0$ such that  $\inf_{\Y_T} f\geq -k_0$ and $v_0(x,t)=u(x,t)+k_0t$ satisfies
    \begin{equation}\label{eq_v_1}
        |D^-v_0|(x,t)=f(x,t)+k_0
    \end{equation}
    for all $(x, t)\in \Y_T$. Fix arbitrarily $k\geq 0$ such that $u\in S_{k}(\Y_T)$ and \eqref{rhs-positive} hold. Let $v(x,t)=u(x,t)+kt$. It is easily seen that $v\in S_0(\Y_T)$. For any $\delta>0$, we have
    \[
    \sup_{d(y, x)\leq \delta}\dfrac{v(x,t)-v(y,t-\delta)}{\delta}=\sup_{d(y, x)\leq \delta} \dfrac{v_0(x,t)-v_0(y,t-\delta)}{\delta}+(k-k_0).
    \]
    Owing to \eqref{eq_v_1}, we pass to the limsup as $\delta\to 0+$ to obtain
    \[
    |D^-v|(x,t)=|D^-v_0|(x,t)+(k-k_0)=f(x,t)+k.
    \]
One can similarly prove the inequalities corresponding to the Monge subsolution and Monge supersolution properties.  
\end{proof}


\subsection{Comparison principle}

We now study the well-posedness of the initial value problem for \eqref{eq special} with the initial value condition \eqref{initial}. Let us begin with a comparison principle for \eqref{eq special}. 

\begin{thm}[Comparison principle]\label{thm:comparison1}
    Let $(\X,d)$ be a complete length space and $T>0$. Let $\Y_T=\X\times (0, T)$. Assume that $f: \Y_T\to \R$ is bounded below.   
    Assume that $u_1, u_2\in S(\Y_T)$ are respectively a bounded Monge subsolution and a bounded Monge supersolution of \eqref{eq special}. If 
    \begin{equation}\label{comp1}
       \lim_{t\to 0+}\sup_{x\in \X}(u_1(x,t)-u_2(x,t))\leq 0, 
    \end{equation}
    then $u_1\leq u_2$ in $\Y_T$. 
\end{thm}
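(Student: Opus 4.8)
The plan is to recast the statement as a comparison principle for the stationary space-time eikonal equation \eqref{eq special2} and to adapt the curve-based argument of \cite{LShZ}. First I would normalize both functions to a common time-modulus: by Lemma~\ref{ind_of_k} I may fix a single $k\geq 0$ with $u_1,u_2\in S_k(\Y_T)$ and $k+\inf_{\Y_T}f\geq 0$, and set $v_i(x,t)=u_i(x,t)+kt$ together with $g:=f+k\geq 0$. Then $v_1,v_2\in S_0(\Y_T)$ are bounded and monotone in time, they satisfy the subslope inequalities $|D^-v_1|\leq g\leq |D^-v_2|$ on $\Y_T$ (with $|D^-v_i|$ as in \eqref{def D-}), and since $v_1-v_2=u_1-u_2$, hypothesis \eqref{comp1} becomes $\limsup_{t\to 0+}\sup_{x\in\X}(v_1-v_2)(x,t)\leq 0$. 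It thus suffices to prove $v_1\leq v_2$ on $\Y_T$.

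I would argue by contradiction, assuming $M:=\sup_{\Y_T}(v_1-v_2)>0$, and fix $(x_0,t_0)$ with $(v_1-v_2)(x_0,t_0)>M-\eta$ for small $\eta>0$. The heart of the argument is a single backward step. At any $(x,t)$, the supersolution inequality $|D^-v_2|(x,t)\geq g(x,t)$ supplies arbitrarily small $\delta>0$ and a point $y$ with $d(x,y)\leq\delta$ realizing $v_2(x,t)-v_2(y,t-\delta)\geq (g(x,t)-\vep)\delta$; for the same $\delta$ (taken below the threshold coming from the $\limsup$) the subsolution inequality $|D^-v_1|(x,t)\leq g(x,t)$, being a supremum over all admissible $y$, forces $v_1(x,t)-v_1(y,t-\delta)\leq (g(x,t)+\vep)\delta$. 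Subtracting, the terms involving $g(x,t)$ cancel and I obtain
\[
(v_1-v_2)(y,t-\delta)\ \geq\ (v_1-v_2)(x,t)-2\vep\delta .
\]
This cancellation is exactly why $f$ need only be bounded below: $g$ is never evaluated or integrated, only used to couple the two inequalities at a common point and scale. Iterating the step from $(x_0,t_0)$ produces a curve $\gamma$ of speed at most $1$ (so $d(\gamma(t'),\gamma(t))\leq |t'-t|$, consistent with the space-time metric $\bar d$ in \eqref{metric spacetime}) along which the gap decays at rate at most $2\vep$; hence $(v_1-v_2)(\gamma(t'),t')\geq M-\eta-2\vep T$ for all reachable $t'$ down to near $0$. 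Letting $t'\to 0+$ and invoking the transformed initial condition contradicts $M>0$ once $\eta,\vep$ are small.

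The main obstacle is making the iteration rigorous down to $t\to 0+$. The supersolution property only guarantees \emph{small}, point-dependent admissible step sizes (it is a $\limsup$ condition), so the greedy concatenation of backward steps must be controlled so that the accumulated times reach arbitrarily close to $0$ and the spatial points remain well-defined. I would handle this via a maximal-extension (Zorn-type) or greedy construction of the descending curve: a maximal such curve cannot terminate at a time $t_*>0$, since the supersolution condition at its endpoint furnishes a further admissible step, and completeness of $(\X,d)$ together with the uniform speed bound guarantees that the endpoint limit exists and stays in a bounded set. The length-space structure is what lets the admissible displacement $d(x,y)\leq\delta$ be realized as genuine motion of $\gamma$. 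Up to this technical curve-construction lemma, which is the component inherited from the stationary analysis in \cite{LShZ}, the comparison follows. Boundedness of $v_1,v_2$ is used to keep $M$ finite and to make the $\limsup$ at $t\to 0+$ meaningful.
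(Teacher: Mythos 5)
Your proposal is correct, but it takes a genuinely different route from the paper. The paper's proof is a single-point argument: it penalizes in time with $\kappa/(T-t)$, applies Ekeland's variational principle to $\Phi=v_1-v_2-\kappa/(T-t)$ on the complete space $(\X\times[\eta,T-\eta],\bar d)$ to produce an approximate maximum point $z_\vep$, and there compares the two subslopes, so that the strict penalty term $\kappa/T^2$ beats the Ekeland error $\vep$ and yields an immediate contradiction with $|D^-v_1|\leq f+k\leq |D^-v_2|$. You instead run a propagation argument: at one point you play the supersolution's existential quantifier (some small $\delta$ and some $y$ with $d(x,y)\leq\delta$) against the subsolution's universal quantifier (all $\delta$ small, all such $y$), so that $g=f+k$ cancels, and then you iterate this backward step to carry the near-maximal gap of $v_1-v_2$ down to $t\to 0+$, contradicting \eqref{comp1}. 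Both proofs avoid doubling of variables and never use continuity of $f$; both in fact only use completeness of $\X$ (your remark that the length-space structure is needed to ``realize the displacement as genuine motion'' is a harmless misattribution --- your chain consists of points with pairwise distance bounds, and no connecting curves are ever required). What the paper's route buys is brevity: one application of each definition at one point, at the price of introducing the penalty and Ekeland's principle. What your route buys is that the behavior near $t=T$ never enters (no barrier $\kappa/(T-t)$, and boundedness is only used to make $\sup(v_1-v_2)$ finite, which could even be dispensed with), and the mechanism --- gap propagation along descending chains --- is closer in spirit to the control-theoretic structure of the equation. The price is the chain-completion step you flagged: it genuinely requires a transfinite or Zorn-type construction, since the admissible step sizes are point-dependent and may shrink, but your sketch closes correctly. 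Concretely, order chains by inclusion with the pairwise conditions $d(y,y')\leq s-s'$ and $(v_1-v_2)(y',s')\geq (v_1-v_2)(y,s)-2\vep(s-s')$ for $s'<s$; unions of nested chains are chains; a chain whose time-infimum $\tau>0$ is not attained can be extended by the limit point (the speed-$1$ bound makes the spatial points Cauchy, completeness gives the limit, and local Lipschitz continuity of $v_1,v_2$ passes the gap estimate to the limit), while a chain attaining $\tau>0$ can be extended by one more supersolution step; hence a maximal chain reaches times arbitrarily close to $0$, which is exactly what your contradiction needs.
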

\begin{proof}
As required in the definition of Monge solutions, we can find $k\geq 0$ large enough such that $u_1, u_2\in S_k(\Y_T)$ and \eqref{rhs-positive} holds. Let  
\begin{equation}\label{unknown-change-i}
v_i(x, t)=u_i(x, t)+kt \quad \text{for $(x,t)\in \Y_T$ and $i=1, 2$.}
\end{equation}
In view of \eqref{comp1}, it is clear that under this change
\begin{equation}\label{comp1-new}
       \lim_{t\to 0}\sup_{\X}\{v_1(x,t)-v_2(x,t)\}\leq 0.
    \end{equation}
Moreover, by Definition \ref{def monge special} and Lemma \ref{ind_of_k}, we have
\begin{equation}\label{comp2-new}
|D^- v_1|(x, t)\leq f(x, t)+k\leq |D^-v_2|(x, t),\quad \text{for all $(x, t)\in \Y_T$.}
\end{equation}
 
It then suffices to show that $v_1\leq v_2$ in $\Y_T$. Assume by contradiction that $\sup_{\Y_T}\ (v_1-v_2)>\mu$ for some $\mu>0$. Let $\kappa>0$ and we define 
    \[
    \Phi(x,t):= v_1(x,t)-v_2(x,t)-\dfrac{\kappa}{T-t},\quad  (x,t)\in \X\times(0,T).
    \]
    For sufficiently small $\kappa>0$, we have $\sup_{\Y_T}\Phi>\mu$. By \eqref{comp1-new}, there exists $0<\eta<T$ sufficiently small such that 
    \begin{equation}\label{th1}
       \sup_{\X\times (0, 2\eta]} \Phi\leq \sup_{\X\times (0, 2\eta]} (v_1-v_2)<\frac{\mu}{2}.
    \end{equation}
    In addition, since $v_1, v_2$ are bounded, we get
    \begin{equation}\label{th2}
    \sup_{\X\times [T-2\eta, T)} \Phi <\frac{\mu}{2} 
    \end{equation}
    by letting $\eta>0$ smaller if necessary. It follows that
    \[
    \sup_{\X\times [2\eta, T-2\eta]}\Phi=\sup_{\Y_T}\Phi >\mu.
    \] 
    Then there exists $z_0\in \X\times (2\eta, T-2\eta)$ such that 
    $\Phi(z_0)\geq \sup_{\X\times [\eta, T-\eta]}\Phi- \varepsilon^2>\mu$
    for $\vep\in (0, \eta)$ sufficiently small. In particular, we can choose $\vep\in (0, \eta)$ satisfying
    \begin{equation}\label{cont1}
        \varepsilon<\dfrac{\kappa}{T^2}.
    \end{equation}    
Since $(\X,d)$ is a complete length space, we see that $(\X\times [\eta, T-\eta],\bar{d})$ is also a complete metric space, where $\bar{d}$ the space-time metric given by \eqref{metric spacetime}. By Ekeland's variational principle (cf. \cite{Eke1, Eke2}), there exists $z_\vep=(x_{\varepsilon},t_{\varepsilon})\in  \overline{B_{\varepsilon}(z_0)}\subset \X\times (\eta, T-\eta)$ such that $z\mapsto \Phi(z)-\varepsilon \bar{d}(z, z_\vep)$ attains a local maximum in $\X\times[\eta, T-\eta]$ at $z=z_\vep$. Here, $B_r(z)$ denotes the open ball in $(\Y_T, \bar{d})$ centered at $z$ with radius $r>0$.

It then follows that, for all $z=(x, t)$ near $z_\vep$, 
    \[
        v_2(z_\vep)-v_2(z)\leq  v_1(z_\vep)-v_1(z) +\dfrac{\kappa}{T-t}-\dfrac{\kappa}{T-t_\varepsilon}+\varepsilon \bar{d}(z_\vep, z).
    \]
In particular, for any $\delta>0$ small,  we have 
\[
v_2(x_\vep, t_\vep)-v_2(x, t_\vep-\delta)\leq  v_1(z_\vep)-v_1(x, t_\vep-\delta) +\dfrac{\kappa}{T-t_\vep+\delta}-\dfrac{\kappa}{T-t_\varepsilon}+\varepsilon \delta
\]
for all $x\in \X$ satisfying $d(x, x_\vep)\leq \delta$. We thus obtain 
    \[
    \begin{aligned}
    &\sup \left\{\frac{v_2(x_\vep, t_\vep)-v_2(x, t_\vep-\delta)}{\delta}: d(x_\vep, x)\leq \delta\right\}\\
    &\leq \sup \left\{\frac{v_1(z_\vep)-v_1(x,t_\vep-\delta)}{\delta}: d(x_\vep, x)\leq \delta\right\} + \frac{1}{\delta}\left(\dfrac{\kappa}{T-t_\vep+\delta}-\dfrac{\kappa}{T-t_\varepsilon}\right) +\varepsilon     
    \end{aligned}
    \]
   for $\delta>0$ small.  
    Taking the limsup as $\delta\to 0+$, we get
    \[
    |D^- v_2|(z_\vep)\leq |D^- v_1|(z_\vep)-\frac{\kappa}{(T-t_\vep)^2}+\vep\leq |D^- v_1|(z_\vep)-\frac{\kappa}{T^2}+\vep.
    \]
Then, \eqref{comp2-new} further yields $\kappa/T^2 \leq \vep$, which is a contradiction to \eqref{cont1}. 
\end{proof}

As an immediate consequence of the comparison result, we obtain the uniqueness of bounded Monge solutions $u$ of the initial value problem for \eqref{eq special} with the initial condition \eqref{initial} interpreted as \eqref{initial-conti0}. Note that Theorem~\ref{thm:comparison1} above does not require $f$ to be continuous. This suggests a potential application to the uniqueness of solutions even for equations with a discontinuous inhomogeneous term $f$. See \cite{LShZ2} for recent developments for the stationary case.


\subsection{Existence of Monge solutions}\label{sec:eikonal-exist}
Let us now consider the existence of Monge solutions of the initial value problem and prove Theorem~\ref{thm:into1}. 
Our existence result is a straightforward adaptation of the classical argument based on a control-based formula. 

Consider the value function $u$, defined by \eqref{eq value_f0}, of an optimal control problem, where the curve class $\Gamma_x$ is given by \eqref{eikonal-curve} for $x\in \X$. 
It is easy to see that $u(x,0)=u_0(x)$ for all $x\in\X$. We show that $u$ given by \eqref{eq value_f0} is a Monge solution of \eqref{eq special}, assuming that $f$ is bounded in $\Y_T$ and is  Lipschitz either in space or in time; that is, there exists $L_f>0$ such that either
 \begin{equation}\label{lip-f-sp}
     |f(x,t)-f(y,t)|\leq L_fd(x,y) \qquad \text{for all}\ x,y\in \X,\; t\in(0,T) 
 \end{equation}
 or 
 \begin{equation}\label{lip-f-time}
     |f(x,t)-f(x,s)|\leq L_f|t-s| \qquad \text{for all}\ x\in \X,\; t,s\in(0,T). 
 \end{equation}

We first show the boundedness and Lipschitz continuity of $u$ in $\X\times [0, T)$. 
\begin{prop}[Boundedness and Lipschitz regularity]\label{prop bound-lip}
    Let $(\X,d)$ be a complete length space. Let $T>0$. Assume that $u_0\in \lip(\X)$ is bounded. Assume also that $f$ is bounded on $\Y_T$, and either \eqref{lip-f-sp} or \eqref{lip-f-time} holds for some $L_f>0$. Then, $u$ given by \eqref{eq value_f0} is bounded and belongs to $\lip(\X\times [0, T))$. In particular, there exists $K>0$ such that 
    \begin{equation}\label{initial-conti}
     \sup_{x\in \X} |u(x, t)-u_0(x)|\leq K t \quad \text{for all $t\in [0, T)$.}
     \end{equation}
\end{prop}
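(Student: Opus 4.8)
The plan is to prove that the value function $u$ defined by \eqref{eq value_f0} is bounded, Lipschitz in both variables jointly, and satisfies the quantitative initial estimate \eqref{initial-conti}. I would organize the argument into four estimates, handling boundedness first, then separately the Lipschitz continuity in space and in time, and finally deriving \eqref{initial-conti} as a byproduct of the time estimate.

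\textbf{Boundedness and the estimate \eqref{initial-conti}.} Since $u_0$ and $f$ are bounded, say $|u_0|\le M_0$ and $|f|\le M_f$ on $\Y_T$, the definition \eqref{eq value_f0} gives immediately, by taking the constant curve $\gamma\equiv x$ for the upper bound and bounding every admissible trajectory from below, that
\[
|u(x,t)|\le M_0+M_f\, t\le M_0+M_f T
\]
for all $(x,t)\in\X\times[0,T)$, so $u$ is bounded. The same computation yields \eqref{initial-conti}: for any competitor $\gamma\in\Gamma_x$ we have $u_0(\gamma(t))\ge u_0(x)-\mathrm{Lip}(u_0)\,d(\gamma(t),x)\ge u_0(x)-\mathrm{Lip}(u_0)\,t$ since $|\gamma'|\le 1$ forces $d(\gamma(t),x)\le t$, and likewise $u_0(\gamma(t))\le u_0(x)+\mathrm{Lip}(u_0)\,t$; combined with $\left|\int_0^t f(\gamma(\sigma),t-\sigma)\,d\sigma\right|\le M_f t$, this gives $|u(x,t)-u_0(x)|\le (\mathrm{Lip}(u_0)+M_f)\,t$, i.e. \eqref{initial-conti} with $K=\mathrm{Lip}(u_0)+M_f$.

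\textbf{Lipschitz continuity in space.} To compare $u(x,t)$ and $u(y,t)$, I would use the standard control-theoretic transplantation of near-optimal curves. Fix $\vep>0$ and pick $\gamma\in\Gamma_y$ that is $\vep$-optimal for $u(y,t)$. Because $(\X,d)$ is a length space, I can prepend to $\gamma$ a short unit-speed curve running from $x$ to $y$ in time essentially $d(x,y)$ and then follow a time-reparametrized version of $\gamma$; the constraint $|\gamma'|\le 1$ is preserved. Estimating the difference in the $u_0$-terms via the Lipschitz bound on $u_0$ and the difference in the running-cost integrals via the boundedness (and, if \eqref{lip-f-sp} holds, the spatial Lipschitz bound) of $f$, I obtain $u(x,t)-u(y,t)\le C\,d(x,y)$; symmetry gives the full spatial Lipschitz estimate with a constant depending on $\mathrm{Lip}(u_0)$, $M_f$, $T$, and $L_f$.

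\textbf{Lipschitz continuity in time and joint regularity.} For the time variable I would exploit a dynamic programming / semigroup property of \eqref{eq value_f0}: restricting the first $h$ units of a trajectory and optimizing over the tail shows that $u(x,t)$ and $u(x,t-h)$ differ by at most $O(h)$, using $|f|\le M_f$ together with the already-established spatial Lipschitz bound to control the shift of the $f$-integrand by $h$ in its time argument (here the assumption \eqref{lip-f-time}, when it is the one in force, directly bounds the time-variation of $f$). Combining the spatial and temporal estimates gives joint Lipschitz continuity on $\X\times[0,T)$ in the product metric. The main obstacle I anticipate is the time estimate: unlike the classical case, the running cost $f(\gamma(\sigma),t-\sigma)$ has its \emph{time} argument coupled to the horizon $t$, so changing $t$ shifts the integrand's second slot, and I must carefully track this shift using either \eqref{lip-f-sp} (through the spatial Lipschitz bound on $u$ itself) or \eqref{lip-f-time} so that both cases yield the same linear-in-$h$ bound; getting a clean uniform constant here, valid as $t\to T-$, is the delicate point.
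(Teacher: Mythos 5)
Your proposal is correct, but it is organized differently from the paper's proof. The paper establishes the joint space--time Lipschitz bound in a single estimate: for arbitrary $z_1=(x_1,t_1)$, $z_2=(x_2,t_2)$ it concatenates a near-optimal curve for $u(z_1)$ with a short connecting arc, and the heart of the argument is the bound \eqref{eq lip-bound2-3} on $F(\gamma_1,\gamma_1,a,b)$, obtained from the decomposition \eqref{eq lip-bound2-1} when \eqref{lip-f-sp} holds and from \eqref{eq lip-bound2-2} when \eqref{lip-f-time} holds; the estimate \eqref{initial-conti} then falls out as a corollary of joint Lipschitz continuity since $u(\cdot,0)=u_0$. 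You instead (a) prove \eqref{initial-conti} directly from \eqref{eq value_f0} using $d(\gamma(t),x)\le t$ --- this is correct, more elementary than the paper's derivation, and notably needs neither \eqref{lip-f-sp} nor \eqref{lip-f-time}; (b) prove spatial Lipschitz continuity at fixed $t$ by curve transplantation; and (c) deduce temporal Lipschitz continuity from the dynamic programming principle (Lemma~\ref{lem dpp1}, which requires only boundedness of $f$, so there is no circularity) together with (b). This modular space-then-time structure is legitimate, and it mirrors, in reverse order, what the paper itself does for superlinear Hamiltonians in Proposition~\ref{prop gen-lip-time} and Proposition~\ref{prop gen-lip}. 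Two points deserve care when you write this out. First, ``time-reparametrized version of $\gamma$'' must mean the delayed curve $\sigma\mapsto\gamma(\sigma-\tau)$; any reparametrization that speeds $\gamma$ up so as to still reach $\gamma(t)$ at time $t$ would violate $|\gamma'|\le 1$. Consequently the transplanted curve arrives at $\gamma(t-\tau)$, not $\gamma(t)$, and its running cost is $\int_0^{t-\tau}f(\gamma(\sigma),(t-\tau)-\sigma)\,d\sigma$, so comparing it with $\int_0^t f(\gamma(\sigma),t-\sigma)\,d\sigma$ is exactly the horizon-shift comparison that forces the two-case analysis on $f$. Hence, in your decomposition, the delicate shift-tracking lives in the spatial step (b), not, as you anticipate, in the time step (c): once (b) is available, the DPP argument in (c) needs only $|f|\le M_f$ on the short interval $[0,h]$ and the spatial bound applied to $u(\gamma(h),t-h)$. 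This is a misplacement of emphasis rather than a gap --- you do invoke both \eqref{lip-f-sp} and \eqref{lip-f-time} somewhere --- but in the final write-up the two-case estimate must appear explicitly inside step (b), since under \eqref{lip-f-time} alone the boundedness of $f$ does not suffice there.
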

\begin{proof}
    The boundedness of $u$ in $\X\times [0, T)$ is an immediate consequence of the boundedness of $f$ and $u_0$. In fact, by the formula of $u$ as in \eqref{eq value_f0}, we have 
\[
\sup_{\Y_T}|u| \leq \sup_{\X} |u_0|+ T \sup_{\Y_T}|f|.
\]
Let us next prove that $u\in\lip(\X\times [0, T))$. Fix  $z_1=(x_1, t_1), z_2=(x_2, t_2)\in \X\times [0, T)$ arbitrarily. By \eqref{eq value_f0}, for every $\delta>0$, we can take a curve $\gamma_1\in \Gamma_{x_1}$  such that 
\begin{equation}\label{eq exist-eikonal5}
 u(z_1)\geq u_0(\gamma_1(t_1))+\int_0^{t_1}f(\gamma_1(\sigma),t_1-\sigma)\;d\sigma-\delta. 
\end{equation}
We can also find another arc-length parametrized curve $\xi: [0, \tau]\to \X$ with $\tau\leq d(x_1, x_2)+\delta$ such that $\xi(0)=x_2$, $\xi(\tau)=x_1$.   

Next, we build a curve $\gamma_2\in \Gamma_{x_2}$ as below: 
\[
\gamma_2(\sigma)=\begin{dcases}
\xi(\sigma) & \text{if $0\leq \sigma\leq \tau$,}\\
\gamma_1(\sigma-\tau) & \text{if $\sigma>\tau$.}
\end{dcases}
\]
It is not difficult to see that 
\begin{equation}\label{eq exist-eikonal2}
d(\gamma_2(t_2), \gamma_1(t_1))
\leq |t_1-t_2|+\tau\leq |t_1-t_2|+d(x_1, x_2)+\delta.
\end{equation}
Below we give an estimate of 
\[
F(\gamma_1,\gamma_2,t_1,t_2):=\int_0^{t_1} f(\gamma_1(\sigma), t_1-\sigma)\, d\sigma -\int_0^{t_2} f(\gamma_2(\sigma), t_2-\sigma)\, d\sigma.
\]
If $t_2\leq \tau$, we have
\begin{equation}\label{eq lip-bound1}
    |F(\gamma_1,\gamma_2,t_1,t_2)|\leq (t_1+\tau)\sup_{\Y_T}|f|\leq 2\sup_{\Y_T}|f|(|t_1-t_2|+\tau).
\end{equation}
Let us now consider the case $t_2>\tau$. To this end, we first estimate $F(\gamma_1, \gamma_1, a, b)$ for any $a, b\in [0, T)$ with $a\geq b$. It is not difficult to see that 
\begin{equation}\label{eq lip-bound2-1}
    \begin{aligned}
        |F(\gamma_1, \gamma_1, a, b)| \leq &\ \left|\int_0^{a-b} f(\gamma_1(\sigma), a-\sigma)\, d\sigma\right|\\
        &+\left|\int_0^{b} f(\gamma_1(a-b+\sigma), b-\sigma)-f(\gamma_1(\sigma), b-\sigma)\, d\sigma\right| 
    \end{aligned}
\end{equation}
and 
\begin{equation}\label{eq lip-bound2-2}
        |F(\gamma_1, \gamma_1, a, b)|  \leq  \left|\int_{b}^{a} f(\gamma_1(\sigma), a-\sigma)\, d\sigma\right|+\left|\int_0^{b} f(\gamma_1(\sigma), a-\sigma)-f(\gamma_1(\sigma), b-\sigma)\, d\sigma\right|. 
\end{equation}
If the space-Lipschitz condition \eqref{lip-f-sp} holds, we apply it to \eqref{eq lip-bound2-1}. If the time-Lipschitz condition \eqref{lip-f-time} holds, we use it for \eqref{eq lip-bound2-2}. In either case, we obtain
\[
|F(\gamma_1, \gamma_1, a, b)|  \leq \ \left(\sup_{\Y_T}|f|+bL_f\right)(a-b)\leq \left(\sup_{\Y_T}|f|+TL_f\right)(a-b).
\]
For the case $b\geq a$, we can apply the same bound with $a, b$ exchanged. Hence, for all $a, b\in [0, T)$,   we have
\begin{equation}\label{eq lip-bound2-3}
    |F(\gamma_1, \gamma_1, a, b)|  \leq \left(\sup_{\Y_T}|f|+TL_f\right)|a-b|.
\end{equation}
We now use \eqref{eq lip-bound2-3} to estimate $|F(\gamma_1,\gamma_2,t_1,t_2)|$ in the case $t_2>\tau$. Note first that
\begin{align*}
 &|F(\gamma_1,\gamma_2,t_1,t_2)|\leq \ \left|\int_0^{\tau} f(\gamma_1(\sigma), t_1-\sigma)\, d\sigma\right|\\
 &\qquad +\left|\int_0^{t_1} f(\gamma_1(\sigma), t_1-\sigma)\, d\sigma-\int_0^{t_2-\tau} f(\gamma_1(\sigma), t_2-\tau-\sigma)\, d\sigma\right|=:A_1+A_2.   
\end{align*}
It is easily seen that $A_1\leq \tau\sup_{\Y_T}|f|$. For $A_2$, we apply \eqref{eq lip-bound2-3} with $a=t_1, b=t_2-\tau$ to get
\[
A_2=|F(\gamma_1, \gamma_1, t_1, t_2-\tau)|  \leq \left(\sup_{\Y_T}|f|+TL_f\right)|t_1-t_2+\tau|. 
\]
Therefore, combining the bounds for $A_1$ and $A_2$, we get
\begin{equation}\label{eq lip-bound2}
    |F(\gamma_1,\gamma_2,t_1,t_2)|\leq \left(2\sup_{\Y_T}|f|+TL_f\right)(|t_1-t_2|+\tau).
\end{equation}
In summary, by combining \eqref{eq lip-bound1} and \eqref{eq lip-bound2}, 
we are led to 
\begin{equation}\label{eq exist-eikonal3}
|F(\gamma_1,\gamma_2,t_1,t_2)|\leq \left(2\sup_{\Y_T}|f|+TL_f\right)(|t_1-t_2|+d(x_1, x_2)+\delta).
\end{equation}
Since $u_0$ is Lipschitz continuous in $\X$,  
by \eqref{eq exist-eikonal2} we get
\begin{equation}\label{eq exist-eikonal4}
u_0(\gamma_2(t_2))-u_0(\gamma_1(t_1))\leq C_0 (|t_1-t_2|+d(x_1, x_2)+\delta)
\end{equation}
for some $C_0>0$. 
Noticing that by definition 
\[
u(z_2)\leq u_0(\gamma_2(t_2))+\int_0^{t_2} f(\gamma_2(\sigma), t_2-\sigma)\, d\sigma,
\]
we then adopt \eqref{eq exist-eikonal5}, \eqref{eq exist-eikonal3}, and \eqref{eq exist-eikonal4} to deduce 
\[
\begin{aligned}
&u(z_2)-u(z_1)-\delta\\
&\leq u_0(\gamma_2(t_2))-u_0(\gamma_1(t_1))+ \left|\int_0^{t_1} f(\gamma_1(\sigma), t_1-\sigma)\, d\sigma -\int_0^{t_2} f(\gamma_2(\sigma), t_2-\sigma)\, d\sigma\right|\\
&\leq \left(C_0+2\sup_{\Y_T}|f|+TL_f\right) (|t_1-t_2|+d(x_1, x_2)+\delta). 
\end{aligned}
\] 
Letting $\delta\to 0$, we deduce that 
\[
u(z_2)-u(z_1)\leq K(|t_1-t_2|+d(x_1, x_2)) 
\]
for some $K>0$ independent of $z_1=(x_1, t_1), z_2=(x_2, t_2)$. We conclude our proof of the Lipschitz continuity of $u$ by exchanging the roles of $z_1, z_2\in \X\times [0, T)$. 
\end{proof}

 It is well known in the Euclidean case that the function $u$ defined by \eqref{eq value_f0} satisfies the so-called dynamic programming principle. We extend the standard argument to length spaces. 

\begin{lem}[Dynamic programming principle]\label{lem dpp1}
    Let $T>0$ and $(\X,d)$ be a complete length space. Assume that $f$ is bounded on $\Y_T$. Let $u:\X\times [0,T)\to \R$ be defined by \eqref{eq value_f0}. Then, at any $(x,t)\in \Y_T$,
    \begin{equation}\label{eq dpp_0}
       u(x,t)=\inf_{\gamma\in \Gamma_{x}}\left\{ 
u(\gamma(\delta),t-\delta)+\int_0^{\delta} f(\gamma(\sigma),t-\sigma)\;d\sigma \right\} 
    \end{equation}
    holds for any $0<\delta\leq t$.
\end{lem}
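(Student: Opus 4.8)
The plan is to prove the identity by establishing the two inequalities ``$\le$'' and ``$\ge$'' separately, denoting the right-hand side of \eqref{eq dpp_0} by
\[
w(x,t):=\inf_{\gamma\in \Gamma_{x}}\left\{ u(\gamma(\delta),t-\delta)+\int_0^{\delta} f(\gamma(\sigma),t-\sigma)\;d\sigma \right\}.
\]
Both directions rest on two elementary structural properties of the curve class $\Gamma_x$ in \eqref{eikonal-curve}: it is stable under \emph{concatenation} (gluing an admissible curve issued from $\gamma(\delta)$ onto the initial piece $\gamma|_{[0,\delta]}$ again yields a curve with speed $\le 1$ starting at $x$), and under \emph{forward time-shift} (the tail $\sigma\mapsto \gamma(\sigma+\delta)$ belongs to $\Gamma_{\gamma(\delta)}$). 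Throughout, the only bookkeeping needed is the change of variable $\sigma\mapsto\sigma+\delta$, which converts the running-cost integral over $[\delta,t]$ into the integral defining $u(\gamma(\delta),t-\delta)$, since the backward time argument transforms as $t-(\sigma+\delta)=(t-\delta)-\sigma$.

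For ``$u(x,t)\le w(x,t)$'', I would fix any $\gamma\in\Gamma_x$ and any $\varepsilon>0$, and by the definition \eqref{eq value_f0} of $u$ at $(\gamma(\delta),t-\delta)$ pick a near-optimal $\eta\in\Gamma_{\gamma(\delta)}$ with
\[
u(\gamma(\delta),t-\delta)\ge u_0(\eta(t-\delta))+\int_0^{t-\delta} f(\eta(\sigma),(t-\delta)-\sigma)\,d\sigma-\varepsilon.
\]
Gluing $\gamma|_{[0,\delta]}$ to $\eta$ produces a competitor $\zeta\in\Gamma_x$ with $\zeta(t)=\eta(t-\delta)$, and splitting its cost integral at $\delta$ and applying the change of variable gives $u(x,t)\le u(\gamma(\delta),t-\delta)+\int_0^\delta f(\gamma(\sigma),t-\sigma)\,d\sigma+\varepsilon$. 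Letting $\varepsilon\to 0$ and taking the infimum over $\gamma$ yields the claim.

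For the reverse inequality ``$u(x,t)\ge w(x,t)$'', I would choose, for $\varepsilon>0$, a near-optimal $\gamma\in\Gamma_x$ in \eqref{eq value_f0}, split its cost integral at $\delta$, and recognize the tail. Setting $\eta(\sigma)=\gamma(\sigma+\delta)$, which lies in $\Gamma_{\gamma(\delta)}$, the change of variable shows
\[
u(\gamma(\delta),t-\delta)\le u_0(\gamma(t))+\int_\delta^t f(\gamma(\sigma),t-\sigma)\,d\sigma,
\]
so that $u(x,t)\ge u(\gamma(\delta),t-\delta)+\int_0^\delta f(\gamma(\sigma),t-\sigma)\,d\sigma-\varepsilon\ge w(x,t)-\varepsilon$. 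Letting $\varepsilon\to 0$ closes the argument. The computation is routine once the curve operations are set up; the only point demanding care---and the main (mild) obstacle---is verifying that concatenation and time-shift keep the curves inside $\Gamma_x$ while the backward time slot of $f$ is tracked consistently under $\sigma\mapsto\sigma+\delta$. Boundedness of $f$ guarantees all integrals are finite, and the case $\delta=t$ reduces \eqref{eq dpp_0} directly to the definition \eqref{eq value_f0}.
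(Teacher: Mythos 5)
Your proof is correct and follows essentially the same route as the paper: both directions are handled by concatenation and time-shift of curves in $\Gamma_x$ together with $\varepsilon$-near-optimal selections, exactly as in the paper's argument. The only cosmetic difference is that in the ``$\ge$'' direction the paper bounds the cost of \emph{every} $\gamma\in\Gamma_x$ before taking the infimum, whereas you pick an $\varepsilon$-optimal $\gamma$ first; you also correctly note the trivial case $\delta=t$, which the paper leaves implicit.
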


\begin{proof}
    Fix $(x,t)\in \Y_T$ and let $0<\delta<t$. For every $\gamma\in \Gamma_{x}$, we take a portion $\gamma^\delta\in \Gamma_{\gamma(\delta)}$ of $\gamma$ defined by  $\gamma^{\delta}(\sigma)=\gamma(\delta+\sigma)$ for $\sigma\geq 0$ such that 
    \[
    \begin{aligned}
        \int_0^t f(\gamma(\sigma),t-\sigma)\, d\sigma +u_0(\gamma(t))= & \int_0^{\delta} f(\gamma(\sigma),t-\sigma)d\sigma\nonumber\\
        &\ +\int_0^{t-\delta} f(\gamma^{\delta}(\sigma),t-\delta-\sigma)\, d\sigma+ u_0(\gamma^{\delta}(t-\delta)).
    \end{aligned}
    \]
    Keeping the value of $\gamma$ in $[0,\delta]$ and taking the infimum of the right hand side over curves in $\Gamma_{\gamma(\delta)}$, we obtain
    $$
    \int_0^t f(\gamma(\sigma),t-\sigma)\;d\sigma +u_0(\gamma(t))\geq \int_0^{\delta}f(\gamma(\sigma),t-\sigma)\;d\sigma +u(\gamma(\delta),t-\delta).
    $$
    Next, taking the infimum for $\gamma\in \Gamma_{x}$ of both sides, we have
    $$
    u(x,t)\geq \inf_{\gamma\in \Gamma_{x}}\left\{ u(\gamma(\delta),t-\delta)+\int_0^{\delta} f(\gamma(\sigma),t-\sigma)\;d\sigma
 \right\}.
    $$
    On the other hand, for every $\gamma\in \Gamma_{x}$ and $\varepsilon>0$, take $\gamma_\vep\in \Gamma_{\gamma(\delta)}$ such that
    $$
    \int_0^{t-\delta} f(\gamma_\vep(\sigma),t-\delta-\sigma)\;d\sigma + u_0(\gamma_\vep(t-\delta))\leq u(\gamma(\delta),t-\delta)+\varepsilon.
    $$
    Then, we obtain that
    \[
    \begin{aligned}
        u(x,t)\leq& \int_0^{\delta}f(\gamma(\sigma),t-\sigma)\;d\sigma 
         +\int_0^{t-\delta}f(\gamma_\vep(\sigma),t-\delta-\sigma)\;d\sigma + u_0(\gamma_\vep(t-\delta))\\
        \leq& \int_0^{\delta}f(\gamma(\sigma),t-\sigma)\;d\sigma +u(\gamma(\delta),t-\delta)+\varepsilon.
    \end{aligned}
    \]
    Since $\gamma\in \Gamma_{x}$ and $\vep>0$ are arbitrary, it follows that
\[
    u(x,t)\leq \inf_{\gamma\in \Gamma_{x}}\left\{ u(\gamma(\delta),t-\delta)+\int_0^{\delta}f(\gamma(\sigma),t-\sigma)\;d\sigma
 \right\}.
\]
Our proof is thus complete. 
\end{proof}

We now verify the Monge solution property of the value function $u$. 
\begin{prop}[Monge solution property]\label{prop existence_2}
	Let $(\X,d)$ be a complete length space. Let $T>0$. Assume that $u_0\in \lip(\X)$ is bounded. Assume also that $f\in C(\Y_T)$ is bounded. If either \eqref{lip-f-sp} or \eqref{lip-f-time} holds for some $L_f>0$, then $u$ defined as in \eqref{eq value_f0} is a Monge solution of \eqref{eq special}.    
\end{prop}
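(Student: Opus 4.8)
The plan is to verify the Monge solution property directly from the dynamic programming principle (Lemma~\ref{lem dpp1}), after fixing an admissible constant $k$. Since Proposition~\ref{prop bound-lip} gives $u\in\lip(\X\times[0,T))$, in particular $u$ is Lipschitz in time, so $u\in S_k(\Y_T)$ for every $k$ at least as large as its time-Lipschitz constant; enlarging $k$ if necessary we also arrange $k+\inf_{\Y_T}f\ge 0$, so that \eqref{rhs-positive} holds. By Lemma~\ref{ind_of_k} the choice of such $k$ is immaterial. Writing $v(x,t)=u(x,t)+kt$, adding $kt$ to both sides of \eqref{eq dpp_0} and using $u(\gamma(\delta),t-\delta)=v(\gamma(\delta),t-\delta)-k(t-\delta)$ converts the dynamic programming principle into
\[
v(x,t)=\inf_{\gamma\in\Gamma_x}\left\{v(\gamma(\delta),t-\delta)+\int_0^\delta\big[f(\gamma(\sigma),t-\sigma)+k\big]\,d\sigma\right\},\qquad 0<\delta\le t.
\]
It then remains to show $|D^-v|(x,t)=f(x,t)+k$ at each $(x,t)\in\Y_T$; I will prove the two inequalities separately, using continuity of $f$ to replace $f(\gamma(\sigma),t-\sigma)$ by $f(x,t)$ up to a modulus $\omega$, since every competitor curve satisfies $d(x,\gamma(\sigma))\le\sigma\le\delta$ and hence stays in the $\delta$-ball of $(x,t)$ in the metric $\bar{d}$.

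For the inequality $|D^-v|(x,t)\le f(x,t)+k$, fix $y$ with $d(x,y)\le\delta$. Because $\X$ is a length space, for each small $\epsilon>0$ there is an arc-length curve from $x$ to $y$ of length $\le d(x,y)+\epsilon\le\delta+\epsilon$; following it and then remaining at $y$ produces $\gamma\in\Gamma_x$ with $\gamma(\delta+\epsilon)=y$. Applying the transformed principle with time step $\delta+\epsilon$ and using the monotonicity $v\in S_0(\Y_T)$, which gives $v(y,t-\delta-\epsilon)\le v(y,t-\delta)$, yields
\[
v(x,t)-v(y,t-\delta)\le\int_0^{\delta+\epsilon}\big[f(\gamma(\sigma),t-\sigma)+k\big]\,d\sigma\le(\delta+\epsilon)\big(f(x,t)+k+\omega(\delta+\epsilon)\big).
\]
Dividing by $\delta$, letting $\epsilon\to 0$, taking the supremum over admissible $y$ and then $\limsup_{\delta\to0+}$ gives the claimed bound. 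The role of the time-monotonicity here is precisely to absorb the mismatch caused by the possible absence of geodesics in a general length space.

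For the reverse inequality, I use the infimum in the transformed principle: given $\delta\in(0,t)$ choose $\gamma\in\Gamma_x$ with
\[
v(x,t)\ge v(\gamma(\delta),t-\delta)+\int_0^\delta\big[f(\gamma(\sigma),t-\sigma)+k\big]\,d\sigma-\delta^2.
\]
Setting $y=\gamma(\delta)$ we automatically have $d(x,y)\le\delta$, and the continuity estimate gives
\[
\frac{v(x,t)-v(y,t-\delta)}{\delta}\ge f(x,t)+k-\omega(\delta)-\delta.
\]
Since this $y$ is admissible in the supremum defining $|D^-v|$, taking $\limsup_{\delta\to0+}$ produces $|D^-v|(x,t)\ge f(x,t)+k$. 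Combining the two inequalities gives \eqref{subslope-1} with equality, so $u$ is a Monge solution. I expect the main obstacle to be the subsolution direction: connecting $x$ to a nearby $y$ by a speed-one curve in exactly time $\delta$ may be impossible when $d(x,y)=\delta$ and $\X$ carries no geodesic, and the device of overshooting the travel time to $\delta+\epsilon$ while exploiting the monotonicity of $v$ in time is what makes the estimate go through cleanly.
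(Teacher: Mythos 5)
Your proposal is correct and follows essentially the same strategy as the paper's proof: establish $u\in S_k(\Y_T)$ via the Lipschitz regularity of Proposition~\ref{prop bound-lip}, rewrite the dynamic programming principle of Lemma~\ref{lem dpp1} in terms of $v=u+kt$, prove the subsolution bound by connecting $x$ to $y$ with a near-geodesic, overshooting the travel time slightly (your $\delta+\epsilon$ plays the role of the paper's $\delta_j\leq\delta(1+1/j)$) and absorbing the mismatch through the monotonicity of $v$ in time, and prove the supersolution bound by evaluating along a $\delta^2$-optimal curve. The only cosmetic difference is that the paper first restricts to points $y$ with $v(y,t-\delta)\leq v(x,t)$ to justify a factor $(1+1/j)$, whereas your arrangement of the inequality makes that restriction unnecessary; both arguments are sound.
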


\begin{proof}
We have shown in Proposition \ref{prop bound-lip} that $u\in \lip(\X\times [0, T))$. It follows that $u\in S_k(\Y_T)$ for some $k>0$. Moreover, we can take $k>0$ large enough such that \eqref{rhs-positive} also holds. Let $v$ be given by \eqref{unknown-change} for this $k>0$. Note that $v\in \lip(\X\times [0, T))\cap S_0(Y_T)$ and by \eqref{eq dpp_0}, $v$ satisfies
  \begin{equation}\label{eq dpp_1}
       v(x,t)=\inf_{\gamma\in \Gamma_{x}}\left\{ 
v(\gamma(\delta),t-\delta)+\int_0^{\delta} f(\gamma(\sigma),t-\sigma)+k\;d\sigma \right\} 
    \end{equation}
 for any $(x,t)\in \Y_T$ and $0<\delta\leq t$. Our goal is then to show that $|D^-v|=f+k$ holds everywhere in $\Y_T$. 

({Subsolution}) Let us first prove $|D^-v|\leq f+k$ in $\Y_T$. Fix arbitrarily $z=(x, t)\in \Y_T$. We take $0<\delta<t/2$ and $y\in \X$ arbitrarily such that $d(x, y)\leq \delta$ and $v(y, t-\delta)\leq v(x, t)$. To use the definition of $|D^- v|(x,t)$ to estimate its upper bound, we only need to consider points $(y, t-\delta)$ satisfying $v(y, t-\delta)\leq v(x, t)$, since we already know that $|D^- v|(x, t)\geq 0$, as emphasized in Remark \ref{rem1}. 

For any integer $j\geq 1$, there exists  
$\gamma_j\in \Gamma_{x}$ such that $\gamma_j(\delta_j)=y$ and
\[
\delta\leq
\ell(\gamma_j|_{[0,\delta_j]})=\delta_j\leq \delta\left(1+\frac{1}{j}\right).
\]
Here and in the sequel, we use $\ell(\gamma)$ to denote the length of a rectifiable curve $\gamma$. 
By \eqref{eq dpp_1}, we have 
\[
    v(x,t)\leq v(y,t-\delta_j)+\int_0^{\delta_j}\left(f(\gamma_j(\sigma), t-\sigma)+k\right)\, d\sigma.
\]
Thanks to the monotonicity of $t\mapsto v(y, t)$, we have $v(y, t-\delta_j)\leq v(y, t-\delta)$, which yields   
\[
0\leq v(x,t)-v(y,t-\delta)\leq v(x,t)-v(y,t-\delta_j). 
\]
It follows that
\begin{equation}\label{eik-exist eq1}
\begin{aligned}
    \dfrac{v(x,t)-v(y,t-\delta)}{\delta}&\leq \left(1+\frac{1}{j}\right)\dfrac{v(x,t)-v(y,t-\delta_j)}{\delta_j}\\
    &\leq \left(1+\frac{1}{j}\right)\dfrac{1}{\delta_j}\int_0^{\delta_j}\left(f(\gamma_j(\sigma),t-\sigma)+k\right) \, d\sigma. 
\end{aligned}
\end{equation}
By the continuity of $f$, for any $\vep>0$, we have 
\[
\sup_{\sigma\in [0, \delta_j]}|f(\gamma_j(\sigma),t-\sigma)-f(x, t)|<\vep
\]
for all $j\geq 1$ large and $\delta>0$ small. Then, \eqref{eik-exist eq1} yields
\[
\begin{aligned}
\dfrac{v(x,t)-v(y,t-\delta)}{\delta}& \leq \left(1+\frac{1}{j}\right)(f(x,t)+k+\vep). 
\end{aligned}
\]
Letting $j\to \infty$ and then $\delta\to 0+$, we obtain $|D^-v|(x,t)\leq f(x, t)+k+\vep$. 
Thanks to the arbitrariness of $\vep$, we complete the proof of the Monge subsolution property of $v$.     

\noindent ({Supersolution}) We next show $|D^-v|\geq f+k$ in $\Y_T$. Fix $z=(x,t)\in\Y_T$ and $\delta>0$ arbitrarily small. By \eqref{eq dpp_1}, there exists $\gamma_\delta\in \Gamma_{x}$ such that ${z}_\delta=(\gamma_\delta(\delta), t-\delta)\in \Y_T$ satisfies
\[
    v(z)+\delta^2\geq v({z}_\delta)+\int_0^{\delta} \left(f(\gamma_\delta(\sigma),t-\sigma)+k\right)\, d\sigma
\]
    which implies
    \begin{equation}\label{eq exist-eikonal1}
    \dfrac{1}{\delta}\left({v(z)-v({z}_\delta)}\right)\geq \dfrac{1}{\delta}\int_0^{\delta} f(\gamma_\delta(\sigma),t-\sigma)\, d\sigma+k-\delta.    
    \end{equation}
    Noticing that $\gamma_\delta\in \Gamma_{x}$ satisfies $|\gamma_\delta'|\leq 1$ a.e. in $(0, \delta)$, we have $d(\gamma_\delta(\delta), x)\leq \delta$. By the continuity of $f$, it then follows from \eqref{eq exist-eikonal1} that
    \[
         |D^-v|(x,t)\geq  \limsup_{\delta\to 0}\dfrac{1}{\delta}\int_0^{\delta}f(\gamma_{\delta}(\sigma),t-\sigma)\;d\sigma+k = f(x,t)+k, 
    \]
as desired. 
\end{proof}

Let us now complete our proof of Theorem~\ref{thm:into1}.
\begin{proof}[Proof of Theorem~\ref{thm:into1}]
We have shown in Proposition~\ref{prop bound-lip} that $u$ defined by \eqref{eq value_f0} is bounded and Lipschitz in $\X\times [0, T)$. It follows that there exist $k\geq 0$ such that $u\in S_k(\Y_T)$. By Proposition~\ref{prop existence_2}, we see that $u$ is a Monge solution of \eqref{eq special}. The initial condition \eqref{initial} also holds in the sense of \eqref{initial-conti0}, as a consequence of \eqref{initial-conti} in Proposition~\ref{prop bound-lip}. The uniqueness of such Monge solutions follows from Theorem~\ref{thm:comparison1}. 
\end{proof}

\begin{rmk}\label{special_cs_Monge}
    Without affecting the existence result, one can replace the curve class $\Gamma_{x}$ in \eqref{eq value_f0} by 
    \[
    \A^\ast_x:=\{\gamma\in\A_x(\X)\; |\; |\gamma'|\leq 1\ \text{a.e. in}\ [0,\infty)  \}, 
    \]
    where $\A_x(\X)$ is given in \eqref{admissible_set_gen}. Under the same assumptions as in Theorem~\ref{thm:into1}, we can use the same arguments to show that 
    \begin{equation}\label{eikonal-value-nakayasu}
    \tilde{u}(x,t):=\inf_{\gamma\in\A^\ast_x}\left\{u_0(\gamma(t))+\int_0^tf(\gamma(\sigma),t-\sigma)\;d\sigma\right\},\qquad\text{for}\ (x,t)\in\X\times[0,T)    
    \end{equation}
    is also a bounded Monge solution of \eqref{eq special} satisfying \eqref{initial-conti0}. In general, we can show that any function $w\in S(\Y_T)$ satisfying, for any $(x, t)\in \Y_T$ and $\delta\in [0, t)$, 
    \[
    \begin{aligned}
    w(x,t)& \leq \inf_{\gamma\in \A^\ast_x}\left\{\int_0^\delta f(\gamma(\sigma),t-\sigma)\,  d\sigma + w(\gamma(\delta),t-\delta)\right\}\\
    &\left(\text{resp.,} \ w(x,t)\geq \inf_{\gamma\in \A^\ast_x}\left\{\int_0^\delta f(\gamma(\sigma),t-\sigma)\,  d\sigma + w(\gamma(\delta),t-\delta)\right\}
    \right)
        \end{aligned}
    \]
    is a Monge subsolution (resp., Monge supersolution) of \eqref{eq special}.
    Then, thanks to the comparison result, Theorem~\ref{thm:comparison1}, we end up with $u=\tilde{u}$ in $\X\times [0, T)$ for $u$ given by \eqref{eq value_f0}.
\end{rmk}



\section{The case with superlinear Hamiltonians}\label{sec:superlinear}

\subsection{Definition of Monge solutions}
Following our study of the eikonal-type equations, we propose a notion of Monge solutions to the Hamilton-Jacobi equation \eqref{eq general}. We assume that $H\in C(\X\times (0, T)\times [0, \infty))$ satisfies (H1) and the associated Lagrangian $L\in C(\X\times (0, T)\times [0, \infty))$ in \eqref{legendre0} fulfills (H2)--(H5). 
In view of (H2) and (H3), we see that $L(x, t, 0)$ is bounded for $(x, t)\in \X\times (0, T)$. Let us  take
\begin{equation}\label{L0}
L_0:=\sup_{x\in \X, t\in (0, T)}|L(x, t, 0)|.
\end{equation}

Suppose that our solution $u\in S(\Y_T)$ for $\Y_T=\X\times (0, T)$.  We can adopt the same procedure as in the case of \eqref{eq special} and make the change of unknown \eqref{unknown-change} with $k\geq 0$ that ensures $u\in S_k(\Y_T)$. We then see that formally $v$ solves
\[
\partial_t v+H(x, t, |\nabla v|)=|\partial_t v|+H(x, t, |\nabla v|)=k \quad \text{in $\X\times (0, T)$.}
\]
Our definition of Monge solutions of \eqref{eq general} is as follows. 

\begin{defi}\label{def monge general}
A function $u\in S(\Y_T)$  is said to be a Monge solution (resp., Monge subsolution, Monge supersolution) of \eqref{eq general} if there exists $k\geq 0$ such that $u\in S_k(\Y_T)$ and $v\in S_0(\Y_T)$ given by \eqref{unknown-change} satisfies 
\begin{equation}\label{subslope-gen}
 |D_L^-v|(x, t)= k\quad \text{(resp., $\leq$, $\geq$)}    
\end{equation}
for any $(x, t)\in \Y_T$, where $|D_L^-v|(x, t)$ at $(x, t)\in \Y_T$ is defined by \eqref{def DL-into}. 
\end{defi}
It is not difficult to see that an equivalent expression of $|D_L^-v|(x, t)$ in \eqref{def DL-into} is 
\begin{equation}\label{def DL-2}
\begin{aligned}
|D_L^- v|(x, t)=\lim_{\delta \to 0+}\sup\bigg\{\frac{v(x, t)-v(y, s)}{t-s} & \ -  L\left(x, t, \frac{d(x, y)}{s}\right):\\
& (y,s)\in \Y_T,\ t-\delta\leq s<t, \ 0<d(x,y)\leq \delta\bigg\}.
\end{aligned}
\end{equation}

\begin{rmk}
Following Lemma \ref{ind_of_k}, one can show that our definition of Monge solutions (resp., Monge subsolutions, Monge supersolutions) does not depend on any particular choice of $k\geq 0$ among those satisfying $u\in S_k(\Y_T)$. 
\end{rmk}

\subsection{Comparison principle}

Next, we study the well-posedness of the initial value problem for \eqref{eq general}\eqref{initial} using the notion of Monge solutions. As a first step, we establish a comparison principle.

\begin{thm}[Comparison principle]\label{thm:comparison-gen}
    Let $(\X,d)$ be a complete length space and $T>0$. 
    Assume that $H\in C(\X\times (0, T)\times [0, \infty))$ satisfies (H1) and $L\in C(\X\times (0, T)\times [0, \infty))$ given by \eqref{legendre0} satisfies (H2).   
    Let $u_1, u_2\in S(\Y_T)$ be respectively a bounded Monge subsolution and a bounded Monge supersolution of \eqref{eq general}. Assume in addition that $u_2\in \lip(\Y_T)$. If \eqref{comp1} holds, 
    then \(u_1\leq u_2\) in $\Y_T$. 
\end{thm}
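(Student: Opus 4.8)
The plan is to mirror the proof of the eikonal comparison principle, Theorem~\ref{thm:comparison1}, replacing the subslope $|D^-|$ by $|D_L^-|$ and supplying one genuinely new estimate that exploits the hypothesis $u_2\in\lip(\Y_T)$. As before, I would first fix $k\geq 0$ large enough that $u_1,u_2\in S_k(\Y_T)$, pass to $v_i(x,t)=u_i(x,t)+kt$, and reduce the claim to $v_1\leq v_2$ in $\Y_T$. By Definition~\ref{def monge general} and the accompanying $k$-independence remark, $v_1,v_2\in S_0(\Y_T)$ satisfy $|D_L^-v_1|\leq k\leq |D_L^-v_2|$ everywhere, and \eqref{comp1} becomes $\limsup_{t\to0+}\sup_\X(v_1-v_2)\leq 0$. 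Note that $v_2\in\lip(\Y_T)$ since $u_2$ is; write $\Lambda$ for its Lipschitz constant with respect to $\bar d$.

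Arguing by contradiction, suppose $\sup_{\Y_T}(v_1-v_2)>\mu>0$ and set $\Phi(x,t)=v_1(x,t)-v_2(x,t)-\kappa/(T-t)$. Exactly as in Theorem~\ref{thm:comparison1}, for $\kappa>0$ small we still have $\sup_{\Y_T}\Phi>\mu$, and the boundedness of $v_1,v_2$ together with the initial condition confines the relevant supremum of $\Phi$ to the time slab $\X\times[2\eta,T-2\eta]$. Applying Ekeland's variational principle on the complete metric space $(\X\times[\eta,T-\eta],\bar d)$ produces a point $z_\vep=(x_\vep,t_\vep)$ with $t_\vep\in(\eta,T-\eta)$ at which $z\mapsto \Phi(z)-\vep\bar d(z,z_\vep)$ attains a local maximum. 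Reading off this maximality at $z=(y,s)$ with $s<t_\vep$, dividing by $t_\vep-s>0$, subtracting $L(x_\vep,t_\vep,d(x_\vep,y)/(t_\vep-s))$ from both sides, and simplifying the penalty difference gives, for all such $(y,s)$ near $z_\vep$,
\[
G_2(y,s)\;\leq\; G_1(y,s)\;-\;\frac{\kappa}{(T-s)(T-t_\vep)}\;+\;\vep\,\frac{\bar d((y,s),z_\vep)}{t_\vep-s},
\]
where $G_i(y,s)=\frac{v_i(z_\vep)-v_i(y,s)}{t_\vep-s}-L\!\left(x_\vep,t_\vep,\frac{d(x_\vep,y)}{t_\vep-s}\right)$.

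The crux is to control the Ekeland error $\vep\,\bar d((y,s),z_\vep)/(t_\vep-s)$, and this is precisely where $v_2\in\lip$ and the coercivity (H2) enter. Choose a sequence $(y_n,s_n)\to z_\vep$ with $s_n<t_\vep$ realizing $|D_L^-v_2|(z_\vep)=\lim_n G_2(y_n,s_n)\geq k$, and set $q_n=d(x_\vep,y_n)/(t_\vep-s_n)$. Since $|v_2(z_\vep)-v_2(y_n,s_n)|\leq \Lambda\,\bar d((y_n,s_n),z_\vep)=\Lambda\max\{q_n,1\}(t_\vep-s_n)$ and $L\geq m$ by (H2), one has $G_2(y_n,s_n)\leq \Lambda\max\{q_n,1\}-m(q_n)$; because $m(q)/q\to\infty$, the right-hand side tends to $-\infty$ as $q_n\to\infty$, so $q_n$ cannot escape to infinity without forcing $G_2(y_n,s_n)\to-\infty$, contradicting $G_2(y_n,s_n)\to|D_L^-v_2|(z_\vep)\geq k$. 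Hence $q_n\leq Q$ for some finite $Q$ depending only on $\Lambda$, $m$, and $k$ — but \emph{not} on $\vep$ — whence $\bar d((y_n,s_n),z_\vep)\leq\max\{Q,1\}(t_\vep-s_n)$ and the Ekeland error is at most $\vep\max\{Q,1\}$ along the sequence. Passing to the limit in the displayed inequality, using $\limsup_n G_1(y_n,s_n)\leq|D_L^-v_1|(z_\vep)\leq k$ and $s_n\to t_\vep-$, yields
\[
k\;\leq\;|D_L^-v_2|(z_\vep)\;\leq\;k-\frac{\kappa}{(T-t_\vep)^2}+\vep\max\{Q,1\}\;\leq\;k-\frac{\kappa}{T^2}+\vep\max\{Q,1\},
\]
so that $\kappa/T^2\leq\vep\max\{Q,1\}$, a contradiction once $\vep$ is chosen small relative to $\kappa$ and the $\vep$-independent constant $Q$.

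I expect the main obstacle to be exactly this last estimate. In the eikonal case the subslope carries no Lagrangian, so the Ekeland term is automatically $O(\vep)$, whereas here $\bar d((y,s),z_\vep)/(t_\vep-s)$ is essentially the speed $q$, which is a priori unbounded. The superlinear growth of $m$ in (H2) together with the a priori Lipschitz bound on $v_2$ is what pins $q$ to a bounded range, and this is the structural reason the theorem must assume $u_2\in\lip(\Y_T)$. It is worth emphasizing that no continuity of $L$ (or $H$) in $x$ or $t$ is used, since both $G_1$ and $G_2$ are built from the same frozen Lagrangian $L(x_\vep,t_\vep,\cdot)$ and the doubling-of-variables step is avoided entirely.
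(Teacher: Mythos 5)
Your proposal is correct and follows essentially the same route as the paper's proof: change of unknown to $v_i$, penalization by $\kappa/(T-t)$, Ekeland's variational principle on $(\X\times[\eta,T-\eta],\bar d)$, and then — the key step — using the Lipschitz bound on $v_2$ together with the coercivity of $m$ in (H2) to show that the sequence realizing $|D_L^-v_2|(z_\vep)\geq k$ has bounded speed ratio $q_n\leq Q$ with $Q$ independent of $\vep$, so the Ekeland error is $O(\vep)$ and the contradiction $\kappa/T^2\leq \vep\max\{Q,1\}$ follows. The paper implements the same speed bound slightly differently (fixing $R$ upfront via $C(q+1)-L(x,t,q)\leq 0$ for $q\geq R$ and choosing $\vep R<\kappa/T^2$ before applying Ekeland), but this is only a cosmetic difference in the order of constants, and your observation that no continuity of $L$ in $(x,t)$ is needed matches the paper's remark as well.
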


\begin{proof}
Our argument below is similar to that of Theorem~\ref{thm:comparison1}. We take $k\geq 1$ such that $u_1, u_2\in S_k(\Y_T)$. Let 
$v_1, v_2\in S_0(\Y_T)$ be as in \eqref{unknown-change-i} such that \eqref{comp1-new} holds. 
Since $u_2\in \lip(\Y_T)$, so does $v_2\in \lip(\Y_T)$. Then there exists $C>0$ such that 
    \begin{equation}\label{v2-lip}
    |v_2(x, t)-v_2(y, s)|\leq C(d(x, y)+|t-s|)\quad \text{for all $(x, t), (y, s)\in \Y_T$.}
    \end{equation}
 By the coercivity condition \eqref{L-cocercive} in (H2), we can find $R>1$ such that 
\begin{equation}\label{coercive-R}
\sup_{(x, t)\in \Y_T}C(q+1)-L(x, t, q) \leq 0 \quad \text{for all $q\geq R$.}
\end{equation}
We fix this constant $R>1$ for our later use.

To show that $v_1\leq v_2$ in $\Y_T$, we assume by contradiction that $\sup_{\Y_T} ( v_1-v_2)>\mu$ holds for some \(\mu>0\).  Let $\kappa>0$ and consider again 
    \[
    \Phi(x,t)= v_1(x,t)-v_2(x,t)-\dfrac{\kappa}{T-t},\quad \text{for}\ (x,t)\in \Y_T.
    \]
  For sufficiently small $\kappa>0$, we have $\sup_{\Y_T}\Phi>\mu$. In view of \eqref{comp1-new} and the boundedness of $v_1, v_2$, there exists $0<\eta<T$ sufficiently small such that \eqref{th1}\eqref{th2} hold. Then, there exists $z_0=(x_0,t_0)\in \X\times (2\eta, T-2\eta)$ such that 
    \[
    \Phi(z_0)\geq \sup_{\X\times [\eta, T-\eta]}\Phi- \varepsilon^2>\mu
    \]
    for $\vep>0$ small. Here we can choose $\vep\in (0, \eta)$ small enough such that 
    \begin{equation}\label{eps-small-gen}
    \vep R< \frac{\kappa}{T^2},    
    \end{equation}
    where $R>1$ is the constant satisfying \eqref{coercive-R}. 
    
Note that $(\X\times [\eta, T-\eta],\bar{d})$ is a complete metric space with the  metric $\bar{d}$ given by \eqref{metric spacetime}. By Ekeland's variational principle, we can find $z_\vep=(x_{\varepsilon},t_{\varepsilon})\in \overline{B_{\varepsilon}(x_0,t_0)}\subset \X\times (\eta, T-\eta)$ such that  $z\mapsto \Phi(z)-\varepsilon \bar{d}(z, z_\vep)$ attains a local maximum in $\X\times[\eta, T-\eta]$ at $z=z_\vep$.     It then follows that for any $\delta>0$ small, 
\[
        v_2(z_\vep)-v_2(z)\leq v_1(z_\vep)- v_1(z) +\dfrac{\kappa}{T-t}-\dfrac{\kappa}{T-t_\varepsilon}+\varepsilon \bar{d}(z_\vep, z)
\]
and thus
    \begin{equation}\label{th4-gen}
        \begin{aligned}
   \frac{v_2(z_\vep)-v_2(z)}{t_\vep- t} & -L\left(x_\vep, t_\vep, \frac{d(x_\vep, x)}{t_\vep-t}\right) -\frac{1}{t_\vep-t}\left(\dfrac{\kappa}{T-t}-\dfrac{\kappa}{T-t_\varepsilon}\right)\\
 & \leq  \frac{ v_1(z_\vep)-v_1(z)}{t_\vep-t} -L\left(x_\vep, t_\vep, \frac{d(x_\vep, x)}{t_\vep-t}\right)  + \varepsilon \max\left\{ \frac{d(x_\vep, x)}{t_\vep-t}, 1\right\}
    \end{aligned}
    \end{equation}
    holds for all $z=(x, t)$ such that $d(x, x_\vep)\leq \delta$, $0<t_\vep-t\leq \delta$. 

Applying the definition of Monge supersolution on $u_2$, we see that there exists a sequence $(y_j, s_j)$ with $s_j<t_\vep$ and $\bar{d}((x_\vep, t_\vep), (y_j, s_j))\leq 1/j$ such that, as $j\to \infty$,  
\[
\frac{v_2(x_\vep, t_\vep)-v_2(y_j, s_j)}{t_\vep- s_j} -L\left(x_\vep, t_\vep, \frac{d(x_\vep, y_j)}{t_\vep-s_j}\right)\to |D_L^-v_2|(x_\vep,t_\vep) \geq k.
\]
In view of the Lipschitz continuity of $v_2$ as in \eqref{v2-lip}, we have 
\[
\begin{aligned}
0< \frac{v_2(x_\vep, t_\vep)-v_2(y_j, s_j)}{t_\vep- s_j}\; -&\ L\left(x_\vep, t_\vep, \frac{d(x_\vep, y_j)}{t_\vep-s_j}\right)\\
&\leq C \left(\frac{d(x_\vep, y_j)}{t_\vep-s_j}+1\right)-L\left(x_\vep, t_\vep, \frac{d(x_\vep, y_j)}{t_\vep-s_j}\right) 
\end{aligned}
\]
and by the choice of $R>1$ in \eqref{coercive-R} derived from the coercivity condition \eqref{L-cocercive}, we get $d(x_\vep, y_j)\leq R(t_\vep-s_j)$ for all $j\geq 1$ large.

Taking the limit of \eqref{th4-gen} as $j\to \infty$ along the sequence $(x, t)=(y_j, s_j)$, we obtain 
\[
|D_L^-v_2|(x_\vep, t_\vep)+\frac{\kappa}{T^2}\leq |D_L^-v_2|(x_\vep, t_\vep)+ \frac{\kappa}{(T-t_\vep)^2}\leq |D_L^- v_1|(x_\vep, t_\vep)+\vep R. 
\]
  Applying the definition of Monge subsolutions and Monge supersolutions to $u_1$ and $u_2$ respectively, we are led to $\kappa/T^2\leq \vep R$,
 which is a contradiction to \eqref{eps-small-gen}.
    \end{proof}

\subsection{Existence of Monge solutions}\label{sec:suplin-exist}

We establish our existence result by considering a control-based formula
\begin{equation}\label{value_funct}
u(x,t)=\inf_{\gamma\in \mathcal{A}_x(\X)}\left\{\int_0^t L(\gamma(\sigma),t-\sigma,|\gamma'|(\sigma))d\sigma +u_0(\gamma(t))\right\}    \quad \text{for $x\in \X$, $t\in [0, T)$}
\end{equation}
where $\mathcal{A}_x(\X)$ is given by \eqref{admissible_set_gen}. We show that $u$ given by this formula is a Monge solution of the initial value problem \eqref{eq general}\eqref{initial}. 

It is clear from \eqref{legendre0} that $q\mapsto L(x,t, q)$ is nondecreasing and convex in $[0, \infty)$ for any $(x, t)\in \X\times (0, T)$. In addition, by the coercivity of $H$ in \eqref{H-cocercive}, (H2), and (H3), we have
\begin{equation}\label{L1}
    L_1:=\sup_{(x,t)\in\Y_T}|L(x,t,1)|<+\infty.
\end{equation}
To see this, we first use \eqref{H-cocercive} to get the existence of $R>0$ such that $p-H(x,t,p)<0$ for all $(x,t)\in\Y_T$ and all $p\geq R$. Moreover, by \eqref{legendre0}, $L(x,t,0)\geq -H(x,t,p)$ for all $p\geq 0$. As a consequence, recalling $L_0$ from \eqref{L0}, we have
\[
-L_0\leq L(x,t,1)\leq \max\left\{0,\sup_{0\leq p<R}\{p-H(x,t,p)\}\right\}\leq R+|L(x,t,0)|\leq R+L_0.
\]

We adapt the standard control-based arguments in the Euclidean space to our setting for Monge solutions. Similar solution formulas (with $L$ independent of $t$) are also provided in \cite{Na1} and \cite{GaS}, but our approach is not the same as theirs due to the distinct notions of solutions. 

We first show the boundedness of $u$ given by \eqref{value_funct}. 

\begin{prop}[Boundedness]\label{prop bdd}
Let $(\X,d)$ be a complete length space and $T>0$. Assume that $L\in C(\X\times (0, T)\times [0, \infty))$ given by \eqref{legendre0}
 satisfies (H2)(H3). Assume that $u_0\in C(\X)$ is bounded. Then the function $u$ given by \eqref{value_funct} satisfies
\[
 -L_0 t +\inf u_0 \leq u(x,t)\leq L_0t+u_0(x).
 \]
for all $(x,t)\in \X\times[0,T)$. 
\end{prop}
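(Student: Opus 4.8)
The plan is to read off both inequalities directly from the control formula \eqref{value_funct}, obtaining the upper bound by testing with a single well-chosen competitor curve and the lower bound by bounding the integrand from below using the monotonicity of $L$ in its last variable. No compactness or comparison argument is needed; the whole statement is a consequence of the definition of $L_0$ in \eqref{L0} and the elementary properties of $L$ recalled just above the proposition.

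For the upper bound, I would test \eqref{value_funct} with the constant curve $\gamma(\sigma)\equiv x$. This curve lies in $\mathcal{A}_x(\X)$: its speed $|\gamma'|\equiv 0$ is (piecewise) constant, and $\sigma\mapsto L(\gamma(\sigma), t-\sigma, 0)=L(x, t-\sigma, 0)$ is continuous by the assumed continuity of $L$, so the admissibility requirements are met. Since $L(x, t-\sigma, 0)\le |L(x, t-\sigma, 0)|\le L_0$ for every $\sigma$, this competitor yields
\[
u(x,t)\le \int_0^t L(x, t-\sigma, 0)\,d\sigma + u_0(\gamma(t)) \le L_0 t + u_0(x),
\]
which is exactly the claimed right-hand inequality.

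For the lower bound, I would estimate the objective uniformly over all $\gamma\in\mathcal{A}_x(\X)$. Since $q\mapsto L(x,t,q)$ is nondecreasing on $[0,\infty)$ and $|\gamma'|(\sigma)\ge 0$, we have $L(\gamma(\sigma), t-\sigma, |\gamma'|(\sigma))\ge L(\gamma(\sigma), t-\sigma, 0)\ge -L_0$ pointwise, whence $\int_0^t L(\gamma(\sigma), t-\sigma, |\gamma'|(\sigma))\,d\sigma\ge -L_0 t$. Combining this with $u_0(\gamma(t))\ge \inf_\X u_0$ shows that every admissible curve produces a value at least $-L_0 t + \inf u_0$, and taking the infimum gives the left-hand inequality. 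There is essentially no hard step here; the only points requiring care are verifying that the constant curve genuinely belongs to $\mathcal{A}_x(\X)$ and that the two-sided bound $|L(x,t,0)|\le L_0$ is applied in the correct direction for each inequality.
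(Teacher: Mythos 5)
Your proposal is correct and follows essentially the same route as the paper: the upper bound via the constant curve $\gamma\equiv x$ and the pointwise bound $L(\cdot,\cdot,0)\le L_0$, and the lower bound via $L(\cdot,\cdot,q)\ge L(\cdot,\cdot,0)\ge -L_0$ together with $u_0\ge\inf u_0$. The only cosmetic difference is that the paper phrases the lower bound through an $\varepsilon$-optimal curve before letting $\varepsilon\to 0$, whereas you bound every competitor uniformly and take the infimum directly; these are logically identical.
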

\begin{proof}
    Fix $(x,t)\in \X\times[0,T)$. By considering a constant curve $\gamma(\sigma)=x$ for all $\sigma\in [0, \infty)$ in the formula \eqref{value_funct}, we obtain 
\[
    u(x,t)\leq \int_0^tL(x,t-\sigma,0)d\sigma+u_0(x) \leq L_0 t+u_0(x),
\]
which gives the desired upper bound. On the other hand, for any $\varepsilon>0$ small, there exists $\gamma_\vep\in \mathcal{A}_x(\X)$ such that
 .  \[
        u(x,t)\geq \int_0^tL(\gamma_\vep(\sigma), t-\sigma,|\gamma_\vep'|(\sigma))\, d\sigma+u_0(\gamma_\vep(t))-\varepsilon
    \geq -L_0 t+\inf u_0-\varepsilon.
    \]
    Letting $\varepsilon\to 0$, we obtain the lower bound.
\end{proof}

One can obtain the dynamic programming principle as below. 

\begin{lem}[Dynamic programming principle]\label{lem dpp}
 Let $(\X,d)$ be a complete length space and $T>0$. Assume that $L\in C(\X\times (0, T)\times [0, \infty))$  satisfies (H2)(H3). Let $u_0\in C(\X)$ and $u$ be given by \eqref{value_funct}. Then,  $u$ satisfies the following dynamic programming principle:
\begin{equation}\label{dpp}
u(x,t)=\inf_{\gamma\in\mathcal{A}_x(\X)}\left\{\int_0^\delta L(\gamma(\sigma), t-\sigma, |\gamma'|(\sigma))\,  d\sigma +u(\gamma(\delta),t-\delta)\right\}\quad \text{for all}\ \delta\in[0,t].
\end{equation}
\end{lem}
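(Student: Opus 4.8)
The plan is to follow the same two-sided argument used for the eikonal case in Lemma~\ref{lem dpp1}, replacing the curve class $\Gamma_x$ by the admissible class $\mathcal{A}_x(\X)$ from \eqref{admissible_set_gen} and the integrand $f(\gamma(\sigma),t-\sigma)$ by $L(\gamma(\sigma),t-\sigma,|\gamma'|(\sigma))$. The endpoint cases $\delta=0$ and $\delta=t$ are immediate from \eqref{value_funct}, so I would fix $0<\delta<t$ and prove the two inequalities in \eqref{dpp} separately.

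For the lower bound, I would take an arbitrary $\gamma\in\mathcal{A}_x(\X)$ and split its cost at parameter $\delta$. Setting $\gamma^\delta(\sigma):=\gamma(\delta+\sigma)$ for $\sigma\geq 0$ and substituting $\tau=\delta+\sigma$ in the tail integral, the time argument transforms correctly since $(t-\delta)-\sigma=t-\tau$, giving
\[
\int_0^t L(\gamma(\sigma),t-\sigma,|\gamma'|(\sigma))\,d\sigma+u_0(\gamma(t))=\int_0^\delta L(\gamma(\sigma),t-\sigma,|\gamma'|(\sigma))\,d\sigma+\int_0^{t-\delta}L(\gamma^\delta(\sigma),(t-\delta)-\sigma,|(\gamma^\delta)'|(\sigma))\,d\sigma+u_0(\gamma^\delta(t-\delta)).
\]
Since $\gamma^\delta\in\mathcal{A}_{\gamma(\delta)}(\X)$, minimizing the last two terms over such curves bounds them below by $u(\gamma(\delta),t-\delta)$; taking the infimum over $\gamma\in\mathcal{A}_x(\X)$ then yields $u(x,t)\geq\inf_\gamma\{\int_0^\delta L(\gamma(\sigma),t-\sigma,|\gamma'|(\sigma))\,d\sigma+u(\gamma(\delta),t-\delta)\}$.

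For the reverse inequality I would fix $\gamma\in\mathcal{A}_x(\X)$ and, for $\varepsilon>0$, choose $\gamma_\varepsilon\in\mathcal{A}_{\gamma(\delta)}(\X)$ that is $\varepsilon$-optimal for $u(\gamma(\delta),t-\delta)$ in \eqref{value_funct}. The concatenation
\[
\tilde\gamma(\sigma)=\begin{cases}\gamma(\sigma)&0\leq\sigma\leq\delta,\\ \gamma_\varepsilon(\sigma-\delta)&\sigma>\delta,\end{cases}
\]
is defined on $[0,\infty)$ with $\tilde\gamma(0)=x$. Evaluating \eqref{value_funct} along $\tilde\gamma$ and invoking the $\varepsilon$-optimality of $\gamma_\varepsilon$ gives $u(x,t)\leq\int_0^\delta L(\gamma(\sigma),t-\sigma,|\gamma'|(\sigma))\,d\sigma+u(\gamma(\delta),t-\delta)+\varepsilon$; taking the infimum over $\gamma$ and letting $\varepsilon\to0$ completes the proof.

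The one point requiring care, and the main obstacle specific to this setting rather than to the eikonal case, is verifying that both the restriction $\gamma^\delta$ and the concatenation $\tilde\gamma$ again lie in the admissible class $\mathcal{A}(\X)$ of the preceding definition, i.e. that the speed remains piecewise constant and $\sigma\mapsto L(\cdot,t-\sigma,|\cdot'|)$ remains piecewise continuous with only finitely many breakpoints. For $\gamma^\delta$ this holds because shifting the parameter merely translates the breakpoints; for $\tilde\gamma$ the breakpoints are contained in the union of those of $\gamma$ on $[0,\delta]$, the breakpoints of $\gamma_\varepsilon$ shifted by $\delta$, and the single join point $\sigma=\delta$, hence still finite. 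Once this bookkeeping is settled, the argument is structurally identical to Lemma~\ref{lem dpp1}.
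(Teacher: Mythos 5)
Your proposal is correct and takes essentially the same approach as the paper: the paper in fact omits the proof of Lemma~\ref{lem dpp}, stating only that it follows the proof of Lemma~\ref{lem dpp1} with the curve class $\mathcal{A}_x(\X)$ and the Lagrangian integrand, which is precisely the two-sided restriction/concatenation argument you carry out. Your extra bookkeeping—checking that the shifted tail $\gamma^\delta$ and the concatenation $\tilde\gamma$ remain admissible (piecewise constant speed and piecewise continuity of $L$ along the curve, with finitely many breakpoints)—is exactly the point that makes the generalization "straightforward," and you handle it correctly.
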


We omit the proof, since \eqref{dpp} is a straightforward generalization of \eqref{c-sol dpp} studied in \cite{Na1} and can be easily derived from \eqref{value_funct} following the proof of Lemma \ref{lem dpp1}. The following proposition states the time Lipschitz regularity of $u$ in \eqref{value_funct}. 

\begin{prop}[Lipschitz continuity in time]\label{prop gen-lip-time}
 Let $(\X,d)$ be a complete length space and $T>0$. Assume that $L\in C(\X\times (0, T)\times [0, \infty))$ 
 satisfies (H2)(H3)(H5). 
 Let $u_0\in \lip(\X)$ and $u$ be given by \eqref{value_funct}.  Then,  there exists $K>0$ such that 
\begin{equation}\label{gen-time-lip}
|u(x,t)-u(x,s)|\leq K|t-s| \quad \text{for all $x\in \X$ and $t, s\in [0, T)$}.
\end{equation} 
In particular, \eqref{initial-conti} holds for such $K>0$. 
\end{prop}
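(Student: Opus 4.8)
The plan is to establish the two one-sided bounds $u(x,t)-u(x,s)\le K(t-s)$ and $u(x,s)-u(x,t)\le K(t-s)$ separately for $0\le s<t<T$, and then combine them. Throughout I would write $C_0>0$ for the Lipschitz constant of $u_0$, recall $L_0$ from \eqref{L0}, and use that the monotonicity of $q\mapsto L(x,t,q)$ gives $L(x,t,q)\ge L(x,t,0)\ge -L_0$ for every $q\ge 0$. The superlinear lower bound in (H2) will enter only through the finiteness of
\[
M:=\sup_{q\ge 0}\bigl(C_0\,q-m(q)\bigr),
\]
which holds because $m(q)/q\to\infty$ by \eqref{L-cocercive}.

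For the upper bound $u(x,t)-u(x,s)\le L_0(t-s)$ I would construct a competitor for the time-$t$ problem that wastes the excess time $t-s$ by remaining at $x$. Concretely, for $\varepsilon>0$ pick $\gamma\in\mathcal{A}_x(\X)$ nearly optimal in \eqref{value_funct} for $(x,s)$, and let $\hat\gamma$ stay at $x$ on $[0,t-s]$ and follow $\gamma$ afterwards, namely $\hat\gamma(\sigma)=\gamma(\sigma-(t-s))$ for $\sigma\ge t-s$. Since $|\hat\gamma'|=0$ on $[0,t-s]$, the added running cost is at most $\int_0^{t-s}L(x,t-\sigma,0)\,d\sigma\le L_0(t-s)$, while the change of variable $\tau=\sigma-(t-s)$ turns the remaining cost exactly into the time-$s$ cost of $\gamma$ and leaves the terminal value $u_0(\gamma(s))$ unchanged. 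Taking the infimum and letting $\varepsilon\to0$ gives the claim; alternatively this is immediate from Lemma~\ref{lem dpp} with $\delta=t-s$ and the constant curve.

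The harder direction is $u(x,s)-u(x,t)\le K(t-s)$, and it is here that (H5) and coercivity are needed. For $\varepsilon>0$ choose $\gamma\in\mathcal{A}_x(\X)$ nearly optimal for $(x,t)$ and use its restriction to $[0,s]$ (extended constantly past $s$) as a competitor for the time-$s$ problem, so that $u(x,s)\le\int_0^sL(\gamma(\sigma),s-\sigma,|\gamma'|(\sigma))\,d\sigma+u_0(\gamma(s))$. Subtracting the near-optimal expression for $u(x,t)$ and splitting $\int_0^t=\int_0^s+\int_s^t$, the quantity $u(x,s)-u(x,t)$ is bounded, up to $\varepsilon$, by the discrepancy $\int_0^s[L(\gamma(\sigma),s-\sigma,|\gamma'|(\sigma))-L(\gamma(\sigma),t-\sigma,|\gamma'|(\sigma))]\,d\sigma$, controlled by $TC_T(t-s)$ via \eqref{L-lip2}, plus the combination $-\int_s^tL(\gamma(\sigma),t-\sigma,|\gamma'|(\sigma))\,d\sigma+\bigl(u_0(\gamma(s))-u_0(\gamma(t))\bigr)$. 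For the last combination I would estimate $u_0(\gamma(s))-u_0(\gamma(t))\le C_0\,d(\gamma(s),\gamma(t))\le C_0\int_s^t|\gamma'|(\sigma)\,d\sigma$ and then bound the integrand pointwise, using \eqref{L-lower}, by $C_0\,|\gamma'|(\sigma)-L(\gamma(\sigma),t-\sigma,|\gamma'|(\sigma))\le C_0\,|\gamma'|(\sigma)-m(|\gamma'|(\sigma))\le M$, giving at most $M(t-s)$. Letting $\varepsilon\to0$ yields $u(x,s)-u(x,t)\le(TC_T+M)(t-s)$. Combining the two bounds produces \eqref{gen-time-lip} with $K=\max\{L_0,\,TC_T+M\}$, and setting $s=0$ together with $u(x,0)=u_0(x)$ gives \eqref{initial-conti}.

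The one genuine difficulty is the lower bound: since near-optimal curves carry no a priori speed bound, one cannot control the displacement $d(\gamma(s),\gamma(t))$ directly, and the superlinearity of $m$ is exactly what absorbs the uncontrolled speed $|\gamma'|$ against the running cost, keeping $C_0|\gamma'|-L$ uniformly bounded by $M$. A minor technical point to verify is that the truncated and constantly extended curves remain in the admissible class $\mathcal{A}_x(\X)$, which is straightforward since restriction and constant extension preserve the piecewise-constant-speed structure.
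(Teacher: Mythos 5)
Your proof is correct, and its skeleton matches the paper's: the upper bound comes from the constant curve in the dynamic programming principle (Lemma~\ref{lem dpp}), and the lower bound comes from taking a near-optimal curve for the later time, splitting the integral at the earlier time, invoking \eqref{L-lip2} for the time shift on the common interval, and using the restricted curve as a competitor for the earlier-time problem. Where you genuinely depart from the paper is in how the coercivity (H2) absorbs the tail: the paper bounds $u_0(\gamma(s))-u_0(\gamma(t))$ by $C_0\ell_\vep$ with $\ell_\vep=\int_s^t|\gamma'|$, bounds $\int_s^t L\geq (t-s)\,m\big(\ell_\vep/(t-s)\big)$ via Jensen's inequality, and then must \emph{bootstrap}: it feeds the already-proved upper bound back in to deduce $\ell_\vep\leq R(t-s)$ from the superlinearity of $m$, finally producing the constant $C_0R+TC_T+\sup_{[0,R]}|m|$. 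You instead absorb the speed pointwise under the integral, bounding $C_0|\gamma'|(\sigma)-L(\gamma(\sigma),t-\sigma,|\gamma'|(\sigma))\leq C_0|\gamma'|(\sigma)-m(|\gamma'|(\sigma))\leq M$ with $M=\sup_{q\geq 0}(C_0q-m(q))<\infty$ by \eqref{L-cocercive}. This is a real simplification: it needs neither Jensen's inequality nor the circular-looking (though legitimate) appeal to the upper bound, and it isolates exactly how superlinearity is used --- as finiteness of a Legendre-type supremum. The trade-off is negligible; both yield an explicit constant, and your $K=\max\{L_0,\,TC_T+M\}$ is just as effective. Your side remark about admissibility is even slightly more cautious than necessary: since the value formula \eqref{value_funct} at time $s$ only sees $\gamma$ on $[0,s]$, the original curve $\gamma\in\mathcal{A}_x(\X)$ itself is already a valid competitor, with no truncation or constant extension required (this is also how the paper proceeds).
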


\begin{proof}
    Let $x\in \X$ and $t_1,t_2\in[0,T)$ with $t_1<t_2$. Recall the constant $L_0$ in \eqref{L0}. Using in \eqref{dpp} the constant curve $\gamma(\sigma)=x$ for all $\sigma \in [0,\infty)$, we have 
    \begin{equation}\label{up-bound-u}
        u(x,t_2)-u(x,t_1)\leq \int_0^{t_2-t_1}L(\gamma(\sigma),t_2-\sigma,0)\, d\sigma \leq L_0 |t_2-t_1|.
    \end{equation}
 For any $0<\varepsilon<1$ small, there exists $\gamma_{\varepsilon}\in\mathcal{A}_x(\X)$ such that 
 \begin{equation}\label{monge-lip eq1}
 \begin{aligned}
 & u(x,t_2)\geq \int_0^{t_2}L(\gamma_\vep(\sigma), t_2-\sigma, |\gamma_\vep'|(\sigma)) d\sigma +u_0(\gamma_{\varepsilon}(t_2))-\varepsilon|t_2-t_1|\\    
 & \geq \int_0^{t_1} L(\gamma_\vep(\sigma), t_2-\sigma, |\gamma_\vep'|(\sigma)) d\sigma +\int_{t_1}^{t_2} L(\gamma_\vep(\sigma), t_2-\sigma, |\gamma_\vep'|(\sigma)) d\sigma+u_0(\gamma_{\varepsilon}(t_2))-\varepsilon|t_2-t_1|.
 \end{aligned}
 \end{equation}
By (H5) and the Lipschitz continuity of $u_0$, we get
\[
\begin{aligned}
&\int_0^{t_1} L(\gamma_\vep(\sigma), t_2-\sigma, |\gamma_\vep'|(\sigma))\, d\sigma +u_0(\gamma_\vep(t_2))\\
&\geq \int_0^{t_1} L(\gamma_\vep(\sigma), t_1-\sigma, |\gamma_\vep'|(\sigma))\, d\sigma -t_1 C_T|t_2-t_1|+u_0(\gamma_\vep(t_1)) -C_0\ell_\vep,    
\end{aligned}
\]
where $\ell_\vep=\int_{t_1}^{t_2}|\gamma_\vep'|(\sigma)\, d\sigma$,  $C_0\geq 0$ is the Lipschitz constant of $u_0$ and $C_T>0$ is the constant appearing in \eqref{L-lip2}. 
It follows from the definition of $u(x, t_1)$ that
\begin{equation}\label{monge-lip eq2}
\int_0^{t_1} L(\gamma_\vep(\sigma), t_2-\sigma, |\gamma_\vep'|(\sigma))\, d\sigma +u_0(\gamma_\vep(t_2))\geq u(x, t_1)-t_1 C_T|t_2-t_1|-C_0\ell_\vep.
\end{equation}
In addition, applying (H2) and Jensen's inequality we have 
\begin{equation}\label{monge-lip eq3}
\int_{t_1}^{t_2} L(\gamma_\vep(\sigma), t_2-\sigma, |\gamma_\vep'|(\sigma)) d\sigma\geq \int_{t_1}^{t_2} m(|\gamma_\vep'|(\sigma))\, d\sigma\geq (t_2-t_1) m\left(\frac{\ell_\vep}{t_2-t_1}\right). 
\end{equation}
Combining \eqref{monge-lip eq1}, \eqref{monge-lip eq2}, and \eqref{monge-lip eq3}, we get 
\begin{equation}\label{monge-lip eq4}
 u(x, t_2)\geq u(x,t_1) +(t_2-t_1) m\left(\frac{\ell_\vep}{t_2-t_1}\right)-(TC_T+\vep)|t_2-t_1|-C_0\ell_\vep. 
\end{equation}
Noticing $\eqref{up-bound-u}$, we thus have  
\[
m\left(\frac{\ell_\vep}{t_2-t_1}\right)\leq L_0 +TC_T+1+C_0 \frac{\ell_\vep}{t_2-t_1},
\]
which by the coercivity of $m$ in \eqref{L-cocercive} yields $\ell_\vep\leq R|t_2-t_1|$ for some $R>0$. 
In view of \eqref{monge-lip eq4} again, taking $\vep\to 0$, we are led to 
\[
u(x, t_2)\geq u(x,t_1)-\tilde{C}|t_2-t_1|,
\]
where 
\[
\tilde{C} =C_0 R +TC_T+\sup_{q\in [0, R]} |m(q)|. 
\]
We conclude the proof of \eqref{gen-time-lip} by taking $K=\tilde{C}+L_0$. 
\end{proof}

We next prove that $u$ defined by \eqref{value_funct} in Lipschitz continuous in $\X\times [0, T)$.   
\begin{prop}[Lipschitz regularity]\label{prop gen-lip}
Let $(\X,d)$ be a complete length space and $T>0$. Assume that $H\in C(\X\times (0, T)\times [0, \infty))$ satisfies (H1) and $L\in C(\X\times (0, T)\times [0, \infty))$ satisfies (H2)(H3)(H5). Let $u_0\in\lip(\X)$.  Then the function $u$ given by \eqref{value_funct} is Lipschitz continuous in $\X\times[0,T)$.
\end{prop}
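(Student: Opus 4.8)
The plan is to reduce the claim to spatial Lipschitz continuity and then combine it with the time-regularity already in hand. Since Proposition~\ref{prop gen-lip-time} provides a constant $K>0$ with $|u(x,t)-u(x,s)|\le K|t-s|$ (and in particular \eqref{initial-conti}), it suffices to produce a constant $C>0$, independent of $t$, such that $|u(x_1,t)-u(x_2,t)|\le C\,d(x_1,x_2)$ for all $x_1,x_2\in\X$ and $t\in[0,T)$; joint Lipschitz continuity then follows from $|u(x_1,t_1)-u(x_2,t_2)|\le |u(x_1,t_1)-u(x_2,t_1)|+|u(x_2,t_1)-u(x_2,t_2)|$.

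For the spatial estimate I would fix $x_1,x_2,t$, write $\rho=d(x_1,x_2)$, and by symmetry bound only $u(x_2,t)-u(x_1,t)$, distinguishing two regimes. When $\rho\ge t$, I use only established facts: by \eqref{initial-conti} and the Lipschitz continuity of $u_0$ (with constant $C_0$), $|u(x_i,t)-u_0(x_i)|\le Kt\le K\rho$, so $u(x_2,t)-u(x_1,t)\le (2K+C_0)\rho$. When $\rho<t$, I construct a competitor: choose a near-optimal $\gamma_1\in\mathcal{A}_{x_1}(\X)$ for $u(x_1,t)$ up to error $\vep$, set $h=\rho+\delta<t$ for $\delta>0$ small, and, using that $\X$ is a length space, take a unit-speed curve $\xi\colon[0,h]\to\X$ from $x_2$ to $x_1$. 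Then $\gamma_2(\sigma)=\xi(\sigma)$ on $[0,h]$ and $\gamma_2(\sigma)=\gamma_1(\sigma-h)$ for $\sigma\ge h$ defines an admissible $\gamma_2\in\mathcal{A}_{x_2}(\X)$ (piecewise-constant speed, inherited regularity), which I feed into \eqref{value_funct}.

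Estimating the resulting cost is the technical core. The fast initial segment contributes at most $L_1 h$ by \eqref{L1}, since $\xi$ has unit speed. After the substitution $\tau=\sigma-h$ the main integral becomes $\int_0^{t-h}L(\gamma_1(\tau),t-h-\tau,|\gamma_1'|(\tau))\,d\tau$, whose time argument is shifted by $h$; condition (H5) absorbs this shift at the price of $C_TTh$. Writing $\int_0^{t-h}=\int_0^t-\int_{t-h}^t$ recovers the near-optimal cost of $\gamma_1$ together with the endpoint term $u_0(\gamma_1(t-h))$, which I compare to $u_0(\gamma_1(t))$ via Lipschitz continuity of $u_0$ and $d(\gamma_1(t-h),\gamma_1(t))\le\int_{t-h}^t|\gamma_1'|$.

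The genuinely delicate point—and the step I expect to be the main obstacle—is controlling the ``lost'' tail $\int_{t-h}^t$, where the speed $|\gamma_1'|$ of the near-optimal curve may be large: the dangerous contributions are $-\int_{t-h}^t L(\gamma_1,\cdot,|\gamma_1'|)$ and $C_0\int_{t-h}^t|\gamma_1'|$. Using the lower bound $L\ge m$ from (H2), their sum is dominated by $\int_{t-h}^t\bigl(C_0|\gamma_1'|-m(|\gamma_1'|)\bigr)\,d\sigma\le M_0 h$, where $M_0:=\sup_{q\ge0}\{C_0 q-m(q)\}$ is finite precisely because $m$ is superlinear by \eqref{L-cocercive}. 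Collecting the bounds gives $u(x_2,t)\le u(x_1,t)+\vep+(L_1+C_TT+M_0)(\rho+\delta)$; letting $\vep,\delta\to0$ and symmetrizing yields the spatial estimate with $C=\max\{2K+C_0,\,L_1+C_TT+M_0\}$, which together with the time-regularity completes the proof.
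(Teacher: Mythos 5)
Your proof is correct, but it takes a genuinely different route from the paper's in the spatial estimate. The paper avoids both the time-shift and the lost-tail issues entirely by a change of time horizon: it prepends the connecting segment $\xi$ to $\gamma_1$ and compares $u(y,t+\tau)$ (not $u(y,t)$) with $u(x,t)$, so that after the substitution the Lagrangian is evaluated at exactly the same time arguments $t-\sigma$ along $\gamma_1$ and the endpoint is exactly $\gamma_1(t)$; this gives $u(y,t+\tau)-u(x,t)\le L_1\tau+\delta$ with no appeal to (H5) and no truncation of $\gamma_1$, and the time-Lipschitz bound from Proposition~\ref{prop gen-lip-time} then converts $u(y,t+\tau)$ back to $u(y,t)$. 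The price is the constraint $\tau<T-t$, i.e.\ $d(x,y)<T-t$, which the paper removes afterwards by a chaining argument along a near-geodesic partition. You instead keep the horizon fixed at $t$, truncate $\gamma_1$ at $t-h$, pay $C_TTh$ for the time shift via (H5) and $M_0h$ for the discarded tail via the superlinear bound $M_0=\sup_{q\ge0}\{C_0q-m(q)\}<\infty$ from \eqref{L-cocercive}, and dispose of the complementary regime $d(x_1,x_2)\ge t$ with \eqref{initial-conti}; this removes the need for any chaining step, at the cost of invoking (H5) and the coercivity of $m$ directly in the spatial estimate (both available under the stated hypotheses, so this is legitimate). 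One cosmetic point: in a length space you can only guarantee a unit-speed curve from $x_2$ to $x_1$ of some length $\ell\le\rho+\delta$, not of length exactly $h=\rho+\delta$; simply take $h=\ell$ and all your bounds go through unchanged.
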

\begin{proof}
    We have shown in Proposition \ref{prop gen-lip-time} that there exists $K>0$ such that 
\begin{equation}\label{gen-lip eq2}
|u(x,t)-u(x,s)|\leq K|t-s| \quad \text{for all $x\in \X$ and $t, s\in [0, T)$}.
\end{equation}
It remains to show $u(\cdot,t)\in \lip(\X)$ with a Lipschitz constant uniformly for all $t\in (0,T)$.

Fix arbitrarily $x,y\in \X,\ t\in(0,T)$ such that 
\begin{equation}\label{gen-lip eq3}
0<d(x,y)<T-t.    
\end{equation}
For any $\delta>0$ such that $\delta<(T-t)-d(x,y)$, there exists $\gamma_1\in \mathcal{A}_x(\X)$ such that 
\begin{equation}\label{gen-lip eq1}
u(x,t)\geq \int_0^tL(\gamma_1(\sigma),t-\sigma,|\gamma_1'|(\sigma))d\sigma + u_0(\gamma_1(t))-\delta.
\end{equation}
There also exists a curve $\xi:[0,\tau]\to\X$ with $\tau>0$ such that $\xi(0)=y$, $\xi(\tau)=x$, $\tau \leq d(x, y)+\delta$, and $|\xi'|=1$ a.e in $(0,\tau)$. Next, we build a curve $\gamma_2\in\mathcal{A}_y(\X)$ as below:
\[
\gamma_2(\sigma)=\begin{dcases}
\xi(\sigma) & \text{if $0\leq \sigma\leq \tau$,}\\
\gamma_1(\sigma-\tau) & \text{if $\tau< \sigma\leq t+\tau$,}\\
\gamma_1(t) & \text{if $t+\tau<\sigma$.}
\end{dcases}
\]
Then by the definition of $u$ we have 
\[
    \begin{aligned}
       &  u(y,t+\tau)\leq \int_0^{t+\tau}L(\gamma_2(\sigma),t+\tau-\sigma,|\gamma_2'|(\sigma))\, d\sigma+u_0(\gamma_2(t+\tau))\\
    =& \int_0^{\tau}L(\gamma_2(\sigma),t+\tau-\sigma, |\gamma_2'|(\sigma))d\sigma+\int_{\tau}^{t+\tau}L(\gamma_2(\sigma),t+\tau-\sigma,|\gamma_2'|(\sigma))\, d\sigma+u_0(\gamma_2(t+\tau))\\
    =& \int_0^{\tau}L(\xi(\sigma),t+\tau-\sigma,1)\, d\sigma+\int_{0}^{t}L(\gamma_1(\sigma),t-\sigma,|\gamma_1'|(\sigma))\, d\sigma+u_0(\gamma_1(t)).
    \end{aligned}
\]
It follows from \eqref{L1} and \eqref{gen-lip eq1} that 
\[
u(y,t+\tau)-u(x,t)\leq L_1\tau+\delta.
\]
Using \eqref{gen-lip eq2}, we further get
\[
\begin{aligned}
   u(y,t)-u(x,t)& =u(y,t)-u(y,t+\tau)+u(y,t+\tau)-u(x,t)\\
   &\leq (K+L_1)\tau+\delta \leq (K+L_1)(d(x,y)+\delta)+\delta.
\end{aligned}
\]
Letting $\delta\to 0$ and exchanging the roles of $x$ and $y$, we obtain
\[
|u(x,t)-u(y,t)|\leq K_1d(x,y)
\]
for all $x, y\in \X$, $t\in (0, T)$ satisfying \eqref{gen-lip eq3}, where $K_1=K+L_1$. 

Finally, let us prove the inequality for a general pair $x, y\in \X$ without the constraint \eqref{gen-lip eq3}.
Since $\X$ is a length space, for any $x,y\in \X,\ t\in(0,T)$ and $\varepsilon>0$ small, there exists an arc-length parametrized curve $\gamma:[0, s]\to\X$ such that $s\leq d(x,y)+\varepsilon$, $\gamma(0)=x$, and $\gamma(s)=y$. Let $\{s_0, s_1,\ldots, s_n\}$ be a partition of $[0,s]$ such that $s_0=0$, $s_n=s$, and $|s_i-s_{i-1}|<T-t$ for all $i=1, 2, \ldots, n$. Then, applying our result above, we get
\[
\begin{aligned}
    |u(x,t)-u(y,t)|& \leq \sum_{i=1}^{n}|u(\gamma(s_i),t)-u(\gamma(s_{i-1}),t)|\\
    &\leq \sum_{i=1}^n K_1d(\gamma(s_{i}),\gamma(s_{i-1}))\leq K_1 s\leq K_1(d(x,y)+\varepsilon).
\end{aligned}
\]
We conclude our proof by letting $\varepsilon\to 0$.
 \end{proof}

 We finally prove that $u$ given by \eqref{value_funct} is a Monge solution of the initial value problem \eqref{eq general}\eqref{initial}. 
\begin{thm}[Existence of Monge solutions]\label{thm:gen-Monge}
     Let $(\X,d)$ be a complete length space and $T>0$. Assume that $H\in C(\X\times (0, T)\times [0, \infty))$ satisfies (H1) and $L\in C(\X\times (0, T)\times [0, \infty))$ satisfies (H2)--(H5). Let $u_0\in\lip(\X)$ be bounded.  Let $u$ be given by \eqref{value_funct}. Then, $u\in \lip(\X\times [0, T))$ is a bounded Monge solution of \eqref{eq general} in the sense of Definition~\ref{def monge general}. 
\end{thm}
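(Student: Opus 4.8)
The plan is to combine the regularity already established for the value function $u$ in \eqref{value_funct} with the dynamic programming principle in order to verify the Monge solution property of Definition~\ref{def monge general}. By Proposition~\ref{prop bdd}, Proposition~\ref{prop gen-lip-time}, and Proposition~\ref{prop gen-lip}, the function $u$ is bounded and belongs to $\lip(\X\times[0,T))$; in particular, if $K>0$ is its time-Lipschitz constant from \eqref{gen-time-lip}, then $u\in S_k(\Y_T)$ for every $k\geq K$. Fixing such a $k$ and setting $v(x,t)=u(x,t)+kt$ as in \eqref{unknown-change}, the dynamic programming principle \eqref{dpp} transfers to $v$ in the form
\[
v(x,t)=\inf_{\gamma\in\mathcal{A}_x(\X)}\left\{\int_0^\delta\big[L(\gamma(\sigma),t-\sigma,|\gamma'|(\sigma))+k\big]\,d\sigma+v(\gamma(\delta),t-\delta)\right\},\qquad 0<\delta\leq t.
\]
It then remains to prove $|D_L^-v|(x,t)=k$ at every $(x,t)\in\Y_T$, with $|D_L^-v|$ given by \eqref{def DL-into}; the remark following Definition~\ref{def monge general} ensures this particular choice of $k$ is immaterial.

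For the Monge subsolution inequality $|D_L^-v|(x,t)\leq k$, I would, for each admissible target $(y,s)$ with $s<t$, take $\delta=t-s$ and use an almost-geodesic in the length space $\X$ from $x$ to $y$, parametrized at constant speed close to $q:=d(x,y)/(t-s)$, as a competitor in the displayed principle. Dividing by $\delta$ and subtracting $L(x,t,q)$ gives
\[
\frac{v(x,t)-v(y,s)}{t-s}-L(x,t,q)\leq k+\frac{1}{\delta}\int_0^\delta\big[L(\gamma(\sigma),t-\sigma,q)-L(x,t,q)\big]\,d\sigma,
\]
where along the short curve $d(\gamma(\sigma),x)\leq d(x,y)$ and $0\leq\sigma\leq\delta$, so the error is bounded by $\omega_L(d(x,y)+\delta)(1+|m(q)|)$ via (H4)--(H5) and vanishes as $(y,s)\to(x,t)$ provided $q$ stays bounded. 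For the complementary regime $q\to\infty$, the Lipschitz bound on $v$ gives $\tfrac{v(x,t)-v(y,s)}{t-s}\leq C(q+1)$ while $L(x,t,q)\geq m(q)$ grows superlinearly by \eqref{L-cocercive}, so the bracketed quantity tends to $-\infty$ and cannot influence the $\limsup$. Hence $|D_L^-v|(x,t)\leq k$.

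The Monge supersolution inequality $|D_L^-v|(x,t)\geq k$ is the delicate part. For small $\delta>0$ I would choose a near-optimal curve $\gamma_\delta\in\mathcal{A}_x(\X)$ in the principle, set $y_\delta=\gamma_\delta(\delta)$ and $\ell_\delta=\int_0^\delta|\gamma_\delta'|\,d\sigma\geq d(x,y_\delta)$. The first key step is an a priori mean-speed bound: near-optimality together with $L\geq m$, Jensen's inequality, and the Lipschitz estimate $|v(x,t)-v(y_\delta,t-\delta)|\leq C(\ell_\delta+\delta)$ forces $C(\ell_\delta/\delta+1)\geq m(\ell_\delta/\delta)+k-\delta$, whence the coercivity \eqref{L-cocercive} of $m$ yields $\ell_\delta/\delta\leq R$ for some fixed $R$ and all small $\delta$, so that $y_\delta\to x$ and $t-\delta\to t-$. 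The same chain also gives $C(\ell_\delta+\delta)+\delta^2\geq\int_0^\delta m(|\gamma_\delta'|)\,d\sigma+k\delta$, so that $\tfrac1\delta\int_0^\delta(1+|m(|\gamma_\delta'|)|)\,d\sigma$ is bounded uniformly in $\delta$ (using that the superlinear convex $m$ is bounded below). Consequently, replacing $(\gamma_\delta(\sigma),t-\sigma)$ by $(x,t)$ inside $L$ costs at most $\omega_L((R+1)\delta)\,\tfrac1\delta\int_0^\delta(1+|m(|\gamma_\delta'|)|)\,d\sigma\to0$ by (H4)--(H5), while convexity of $q\mapsto L(x,t,q)$ and Jensen give $\tfrac1\delta\int_0^\delta L(x,t,|\gamma_\delta'|)\,d\sigma\geq L(x,t,\ell_\delta/\delta)$, and monotonicity of $L(x,t,\cdot)$ with $d(x,y_\delta)\leq\ell_\delta$ gives $L(x,t,\ell_\delta/\delta)\geq L(x,t,d(x,y_\delta)/\delta)$. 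Chaining these with the near-optimality estimate produces
\[
\frac{v(x,t)-v(y_\delta,t-\delta)}{\delta}-L\!\left(x,t,\frac{d(x,y_\delta)}{\delta}\right)\geq k-\delta-o(1),
\]
and letting $\delta\to0$ along this family yields $|D_L^-v|(x,t)\geq k$, completing the verification.

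I expect the main obstacle to be this supersolution direction, and within it the handling of the near-optimal curves $\gamma_\delta$: one must first extract the uniform mean-speed bound $\ell_\delta/\delta\leq R$ from coercivity, and then absorb the space-time variation of $L$ along $\gamma_\delta$ without any pointwise speed bound, which is exactly where the $m$-weighted modulus of (H4) and the convexity built into (H1) are both indispensable. The subsolution direction, by contrast, is routine once the large-$q$ targets are discarded through the Lipschitz estimate and superlinear growth.
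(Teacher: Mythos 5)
Your proposal is correct and follows essentially the same route as the paper: the paper likewise combines the regularity statements (Propositions~\ref{prop bdd}, \ref{prop gen-lip-time}, \ref{prop gen-lip}) with the dynamic programming principle, proves the subsolution inequality by splitting into the bounded and unbounded regimes of $d(x,y)/(t-s)$ and inserting a constant-speed near-geodesic competitor, and proves the supersolution inequality via near-optimal curves, the Jensen/coercivity mean-speed bound $\ell_\delta\leq R\delta$, and the $m$-weighted modulus in (H4) (Propositions~\ref{prop gen-monge1} and \ref{prop gen-monge2}). The only differences are cosmetic: the paper isolates the sub/supersolution verifications as standalone propositions for any Lipschitz function satisfying the DPP inequalities, and your intermediate bound $C(\ell_\delta/\delta+1)\geq m(\ell_\delta/\delta)+k-\delta$ has a harmless bookkeeping slip in how $k$ enters (it cancels when the Lipschitz constant of $v$ rather than $u$ is used), which does not affect the conclusion.
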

We have already shown in Proposition \ref{prop bdd}, Proposition \ref{prop gen-lip-time}  and Proposition \ref{prop gen-lip} that $u$ given by \eqref{value_funct} is bounded and Lipschitz in $\X\times [0, T)$. 
It thus suffices to verify the properties of Monge subsolutions and Monge supersolutions for $u$, which are discussed in the following two propositions. 

\begin{prop}[Monge subsolution property]\label{prop gen-monge1}
    Let $(\X,d)$ be a complete length space and $T>0$. Assume that $L\in C(\X\times (0, T)\times [0, \infty))$ satisfies (H2)(H4). Suppose that $u\in \lip(\Y_T)$ satisfies 
    \begin{equation}\label{control-sub}
    u(x, t)\leq \inf_{\gamma\in \A_x(\X)}\left\{\int_0^\delta L(\gamma(\sigma),t-\sigma,|\gamma'|(\sigma))d\sigma + u(\gamma(\delta),t-\delta)\right\}
    \end{equation}
    for any $(x, t)\in \Y_T$ and $0\leq \delta<t$.
    Then $u$ is a Monge subsolution of \eqref{eq general} in the sense of Definition~\ref{def monge general}. In particular, if (H2)--(H5) hold and $u_0\in \lip(\X)$ is bounded, $u$ given by \eqref{value_funct} is a bounded Monge subsolution of \eqref{eq general}.
\end{prop}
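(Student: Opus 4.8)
The plan is to reduce the claim to a pointwise upper bound on the space-time subslope $|D_L^-v|$ and then extract that bound from the one-sided dynamic programming inequality \eqref{control-sub}. Since $u\in\lip(\Y_T)$, I would fix $k\geq 0$ so large that $u\in S_k(\Y_T)$ and set $v(x,t)=u(x,t)+kt$ as in \eqref{unknown-change}, so that $v\in S_0(\Y_T)$ is nondecreasing in time. Rewriting \eqref{control-sub} in terms of $v$, for every $\gamma\in\A_x(\X)$ and $0\le\delta<t$ one gets
\[
v(x,t)\le \int_0^\delta L(\gamma(\sigma),t-\sigma,|\gamma'|(\sigma))\,d\sigma+v(\gamma(\delta),t-\delta)+k\delta.
\]
It then suffices to prove $|D_L^-v|(x,t)\le k$ at every $(x,t)\in\Y_T$, with $|D_L^-v|$ the limsup in \eqref{def DL-into}; this is exactly the Monge subsolution inequality \eqref{subslope-gen} in Definition~\ref{def monge general}.

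Fix $(x,t)\in\Y_T$ and argue by contradiction: suppose there is a sequence $(y_n,s_n)\to(x,t)$ with $s_n<t$ along which the bracketed quotient in \eqref{def DL-into} tends to some $\lambda>k$. Write $\delta_n=t-s_n\to 0$ and $q_n=d(x,y_n)/\delta_n$. First I would rule out the high-speed regime: using the Lipschitz bound $|v(x,t)-v(y_n,s_n)|\le C(d(x,y_n)+\delta_n)$ together with the lower bound $L\geq m$ from (H2), the quotient is at most $C(q_n+1)-m(q_n)$, and the superlinearity \eqref{L-cocercive} of $m$ forces this to $-\infty$ whenever $q_n\to\infty$. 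Hence the sequence must satisfy $q_n\le R$ for some $R>0$, and it remains to treat bounded speeds.

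For each such $n$, since $\X$ is a length space I would select a curve from $x$ to $y_n$ of length $\ell_n\le d(x,y_n)+\delta_n^2$, parametrize it with constant speed $\ell_n/\delta_n$ on $[0,\delta_n]$ and extend it as the constant $y_n$ afterwards; this yields an admissible $\gamma_n\in\A_x(\X)$ with $\gamma_n(\delta_n)=y_n$ and $\ell_n/\delta_n\le q_n+\delta_n\le R+1$. Feeding $\gamma_n$ and $\delta=\delta_n$ into the rewritten inequality gives
\[
\frac{v(x,t)-v(y_n,s_n)}{\delta_n}-k\le \frac{1}{\delta_n}\int_0^{\delta_n}L\!\left(\gamma_n(\sigma),t-\sigma,\tfrac{\ell_n}{\delta_n}\right)d\sigma.
\]
Along the curve $d(\gamma_n(\sigma),x)\le\ell_n\to0$ and $|(t-\sigma)-t|=\sigma\le\delta_n$, so (H4) replaces the integrand by $L(x,t,\ell_n/\delta_n)$ up to an error $\omega_L(\ell_n+\delta_n)(1+|m(\ell_n/\delta_n)|)$, which is $o(1)$ because $\ell_n/\delta_n$ stays in the compact set $[0,R+1]$ (so $|m|$ is bounded there) and $\omega_L(\ell_n+\delta_n)\to0$. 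Finally, the uniform continuity of $L(x,t,\cdot)$ on $[0,R+1]$ lets me replace $L(x,t,\ell_n/\delta_n)$ by $L(x,t,q_n)=L(x,t,d(x,y_n)/\delta_n)$ with a further $o(1)$ error, since $\ell_n/\delta_n-q_n\le\delta_n$. Passing to the limit yields $\lambda\le k$, a contradiction, so $|D_L^-v|(x,t)\le k$. I expect the delicate point to be exactly this limit passage: one must control the $x,t$-dependence of $L$ through (H4) while keeping the speed argument $d(x,y)/(t-s)$ bounded, which is legitimate only because the superlinear lower bound $m$ suppresses the large-speed contributions to the limsup.

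For the final assertion, I would apply the first part to the value function $u$ of \eqref{value_funct}: Proposition~\ref{prop bdd} gives boundedness and Proposition~\ref{prop gen-lip} gives $u\in\lip(\Y_T)$, while the dynamic programming principle of Lemma~\ref{lem dpp} yields equality in \eqref{control-sub} and, in particular, the required inequality. Hence $u$ is a bounded Monge subsolution of \eqref{eq general}.
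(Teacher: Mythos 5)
Your proposal is correct and follows essentially the same argument as the paper's proof: change unknown to $v=u+kt$, dispose of unbounded ratios $d(x,y_n)/(t-s_n)$ via the Lipschitz bound on $v$ together with the superlinear lower bound $m$ from (H2), and for bounded ratios insert a nearly geodesic constant-speed admissible curve into the dynamic programming inequality, controlling the errors through (H4). The only differences are cosmetic --- you phrase the two cases as a contradiction, use an additive rather than multiplicative length error, and invoke uniform continuity of $q\mapsto L(x,t,q)$ on compact sets where the paper uses its monotonicity in $q$ --- and your treatment of the value function via Proposition~\ref{prop bdd}, Proposition~\ref{prop gen-lip} and Lemma~\ref{lem dpp} matches the paper's conclusion.
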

\begin{proof}
Suppose that $u$ is $C$-Lipschitz in $\Y_T$ with $C>0$.  Choose $k\geq C$. 
It is clear that $u\in S_k(\Y_T)$. Let $v$ be given by \eqref{unknown-change}. 

Fix $z=(x,t)\in\Y_T=\X\times (0, T)$ arbitrarily. In view of \eqref{control-sub}, for any $\gamma\in\mathcal{A}_x(\X)$
    \begin{equation}\label{c2m eq5}
    u(z)\leq \int_0^\delta L(\gamma(\sigma),t-\sigma,|\gamma'|(\sigma))d\sigma + u(\gamma(\delta),t-\delta)
    \end{equation}
    holds for all $\delta\in(0,t)$. Let $(y_\delta, s_\delta)\in\Y_T$ with $0<t-s_\delta \leq \delta$ and $d(x, y_\delta)\leq \delta$. We can find a curve $\ol{\gamma}_\delta\in \mathcal{A}_x(\X)$ with a constant speed $c_\delta\geq 0$ a.e. in $(0, t-s_\delta)$ such that $\ol{\gamma}_\delta(0)=x,\;\ol{\gamma}_\delta(t-s_\delta)=y_\delta$, and 
    \[
     (t-s_\delta) c_\delta  =\ell(\ol{\gamma}_\delta|_{[0, t-s_\delta]})\leq (1+\delta)d(x,y_\delta). 
    \]
     By the Lipschitz continuity of $u$, we get
    \[
    \frac{v(x, t)-v(y_\delta, s_\delta)}{t-s_\delta}\leq \dfrac{u(x,t)-u(y_\delta, s_\delta)}{t-s_\delta}+k \leq C+k+C\frac{d(x, y_\delta)}{t-s_\delta}. 
    \]
We use the expression \eqref{def DL-2} to estimate $ |D^-_Lv|(x, t)$ by dividing our discussions into two cases regarding the limit of $d(x, y_\delta)/(t-s_\delta)$. 

Case (i): Suppose that $d(x, y_\delta)/(t-s_\delta)\to \infty$ as $\delta\to 0+$. Then by (H2) we obtain 
    \begin{equation}\label{c2m add1}
    \begin{aligned}
    &\limsup_{\delta\to 0+}\left\{\frac{v(x, t)-v(y_\delta, s_\delta)}{t-s_\delta}-L\left(x,t,\dfrac{d(x,y_\delta)}{t-s_\delta}\right)\right\}\\
    &\leq \limsup_{\delta\to 0+}\left\{(C+k)\left(1+\frac{d(x, y_\delta)}{t-s_\delta}\right)- L\left(x,t,\dfrac{d(x,y_\delta)}{t-s_\delta}\right) \right\} \leq 0<k.   
    \end{aligned}
    \end{equation}
Case (ii): Suppose that $\limsup_{\delta\to 0+}d(x, y_\delta)/(t-s_\delta)\leq R$
    for some $R>0$. In this case, we have $c_\delta\leq 2R$ for all $\delta>0$ small. 
    By \eqref{c2m eq5} and the monotonicity of $q\mapsto L(x, t, q)$, we get 
    \[
    \begin{aligned}
         \dfrac{v(x, t)-v(y_\delta, s_\delta)}{t-s_\delta}-L\left(x,t,\dfrac{d(x,y_\delta)}{t-s_\delta}\right)&\leq \dfrac{u(x,t)-u(\ol{\gamma}_\delta(t-s_\delta), s_\delta)}{t-s_\delta}+k-L\left(x,t, \frac{c_\delta}{1+\delta}\right)\\
         \leq \ &  k+\dfrac{1}{t-s_\delta}\int_0^{t-s_\delta}L(\ol{\gamma}_\delta(\sigma), t-\sigma, c_\delta)\, d\sigma-L\left(x,t, \frac{c_\delta}{1+\delta}\right). 
        \end{aligned}
        \]
Since $(\ol{\gamma}_\delta(\sigma), t-\sigma, c_\delta)$ stays in a bounded subset of $\X\times (0, T)\times [0, \infty)$ for all $\sigma\in [0, t-s_\delta]$ with $\delta>0$ small, by the local uniform continuity of $L$ in (H4), there exists a modulus of continuity $\omega_R$ such that 
\[
\left|L(\ol{\gamma}_\delta(\sigma), t-\sigma, c_\delta)-L\left(x,t, \frac{c_\delta}{1+\delta}\right)\right|\leq \omega_R\left((1+\delta)d(x,y_\delta)+\delta+\frac{\delta}{1+\delta}c_\delta\right). 
\]
It then follows that
        \[
         \dfrac{v(x, t)-v(y_\delta, s_\delta)}{t-s_\delta}-L\left(x,t,\dfrac{d(x,y_\delta)}{t-s_\delta}\right)\leq k+\omega_R\left((1+\delta)d(x,y_\delta)+\delta+\frac{2\delta R}{1+\delta}\right)
    \]
    for all $\delta>0$ small. Sending $\delta\to 0$ and combining it with \eqref{c2m add1} for Case (i), we obtain $|D^-_Lv|(z)\leq k$. 
    Hence, $u$ is a Monge subsolution of \eqref{eq general}.

For the function $u$ defined by \eqref{value_funct} with $u_0\in \lip(\X)$ bounded, by Lemma \ref{lem dpp} we see that $u$ satisfies \eqref{control-sub} and therefore is a Monge subsolution of \eqref{eq general}. Its boundedness is established in Proposition \ref{prop bdd}. 
    \end{proof}

 \begin{prop}[Monge supersolution property]\label{prop gen-monge2}
    Let $(\X,d)$ be a complete length space and $T>0$. Assume that $L\in C(\X\times (0, T)\times [0, \infty))$ satisfies (H2)(H4).  If $u\in \lip(\Y_T)$ and satisfies 
    \begin{equation}\label{control-super}
    u(x, t)\geq \inf_{\gamma\in \A_x(\X)}\left\{\int_0^\delta L(\gamma(\sigma),t-\sigma,|\gamma'|(\sigma))d\sigma + u(\gamma(\delta),t-\delta)\right\}
    \end{equation}
    for any $(x, t)\in \Y_T$ and $0\leq \delta<t$. 
    Then $u$ is a Monge supersolution of \eqref{eq general} in the sense of Definition~\ref{def monge general}. In particular, if (H2)--(H5) hold and $u_0\in \lip(\X)$ is bounded, $u$ given by \eqref{value_funct} is a bounded Monge supersolution of \eqref{eq general}.
\end{prop}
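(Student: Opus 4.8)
The plan is to mirror the supersolution half of Proposition~\ref{prop existence_2}, now carrying the Lagrangian along. Let $C$ be the Lipschitz constant of $u$ on $\Y_T$, choose any $k\geq C$ so that $u\in S_k(\Y_T)$, and set $v=u+kt\in S_0(\Y_T)$ as in \eqref{unknown-change}. By Definition~\ref{def monge general} it suffices to prove $|D_L^-v|(x,t)\geq k$ at each $(x,t)\in\Y_T$. Using the supersolution-type inequality \eqref{control-super}, for every small $\delta>0$ I would extract a nearly optimal curve $\gamma_\delta\in\A_x(\X)$ with
\[
u(x,t)\geq \int_0^\delta L(\gamma_\delta(\sigma),t-\sigma,|\gamma_\delta'|(\sigma))\,d\sigma + u(\gamma_\delta(\delta),t-\delta)-\delta^2,
\]
and then test the limsup in \eqref{def DL-into} against the specific points $y_\delta=\gamma_\delta(\delta)$ and $s_\delta=t-\delta$, writing $\ell_\delta=\int_0^\delta|\gamma_\delta'|\,d\sigma\geq d(x,y_\delta)$ and noting $t-s_\delta=\delta$.

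The first technical step is a bound on the averaged speed $\ell_\delta/\delta$. Since $u$ is $C$-Lipschitz, the left-hand side above is controlled by $C(\ell_\delta+\delta)$, while (H2) together with Jensen's inequality gives $\int_0^\delta L\,d\sigma\geq \delta\, m(\ell_\delta/\delta)$. Combining the two yields $m(\ell_\delta/\delta)\leq C\ell_\delta/\delta + C+\delta$, so the superlinear coercivity \eqref{L-cocercive} forces $\ell_\delta/\delta\leq R$ for some $R>0$ and all small $\delta$. This uniform bound makes the curves concentrate near $x$, since $d(x,\gamma_\delta(\sigma))\leq \ell_\delta\leq R\delta\to0+$.

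The main estimate then freezes the first two slots of $L$ at $(x,t)$. Using (H4), for each $\sigma\in[0,\delta]$ I would replace $L(\gamma_\delta(\sigma),t-\sigma,|\gamma_\delta'|(\sigma))$ by $L(x,t,|\gamma_\delta'|(\sigma))$ at the cost of an error bounded by $\omega_L((R+1)\delta)\bigl(1+|m(|\gamma_\delta'|(\sigma))|\bigr)$. The key point is that this weight integrates well: from the speed bound and the lower bound $m\geq -M_0$ (valid since a convex $m$ with superlinear growth is bounded below on $[0,\infty)$) one gets $\int_0^\delta\bigl(1+|m(|\gamma_\delta'|)|\bigr)\,d\sigma\leq C'\delta$, so the total frozen-Lagrangian error is $O(\omega_L((R+1)\delta))=o(1)$ after dividing by $\delta$. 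Convexity of $q\mapsto L(x,t,q)$ via Jensen and its monotonicity then give $\tfrac1\delta\int_0^\delta L(x,t,|\gamma_\delta'|)\,d\sigma\geq L(x,t,\ell_\delta/\delta)\geq L(x,t,d(x,y_\delta)/\delta)$. Putting these together,
\[
\frac{v(x,t)-v(y_\delta,s_\delta)}{t-s_\delta}-L\!\left(x,t,\frac{d(x,y_\delta)}{t-s_\delta}\right)\geq k-\omega_L((R+1)\delta)C'-\delta,
\]
and letting $\delta\to0+$ along this sequence shows the limsup defining $|D_L^-v|(x,t)$ is at least $k$, which is exactly the Monge supersolution property.

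For the final assertion, when $u$ is the value function \eqref{value_funct} under (H2)--(H5) with $u_0\in\lip(\X)$ bounded, the dynamic programming principle of Lemma~\ref{lem dpp} gives \eqref{control-super} (indeed as an equality), the Lipschitz regularity follows from Proposition~\ref{prop gen-lip-time} and Proposition~\ref{prop gen-lip}, and boundedness from Proposition~\ref{prop bdd}; hence the first part applies. I expect the main obstacle to be the simultaneous control of the $(x,t)$-dependence of $L$ and the possibly large integrand $|\gamma_\delta'|$: one must exploit the multiplicative structure $(1+|m(q)|)$ in (H4) together with the averaged speed bound from coercivity, rather than hoping for a pointwise speed bound, since $|\gamma_\delta'|$ need not be uniformly bounded along the near-optimal curves.
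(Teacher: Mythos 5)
Your proposal is correct and follows essentially the same route as the paper's proof: near-optimal curves with $\delta^2$ error, the averaged speed bound $\ell_\delta\leq R\delta$ obtained from (H2), Jensen's inequality, the coercivity of $m$ and the Lipschitz bound on $u$, then freezing $L$ at $(x,t)$ via the weighted modulus in (H4) (controlling $\int|m(|\gamma_\delta'|)|$ by the same speed bound and the lower bound of $m$), and finally Jensen plus monotonicity of $q\mapsto L(x,t,q)$ to conclude $|D_L^-v|\geq k$. The only differences are cosmetic, e.g.\ writing $\omega_L((R+1)\delta)$ in place of the paper's $\omega_L(\ell_\delta+\delta)$ and making explicit that $m$ is bounded below on $[0,\infty)$, which the paper uses implicitly through $m_0=\inf_{[0,\infty)}m$.
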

\begin{proof}
    Assuming that $u$ is $C$-Lipschitz in $\Y_T$ for $C>0$, we again take $v$ as in \eqref{unknown-change} with $k\geq C$ such that $u\in S_k(\Y_T)$ and $v\in S_0(\Y_T)$. 

    Fix $z=(x, t)\in \Y_T$ arbitrarily. By \eqref{control-super}, for any $\delta\in (0, t)$ small, there exists $\gamma_\delta\in\mathcal{A}_x(\X)$ such that 
    \begin{equation}\label{c2m eq1}
    u(z)\geq \int_0^{\delta}L(\gamma_\delta(\sigma),t-\sigma,|\gamma_\delta'|(\sigma))d\sigma + u(\gamma_\delta(\delta),t-\delta)-\delta^2.     
    \end{equation}
 We take 
 \[
 \ell_{\delta}=\ell(\gamma_\delta|_{[0, \delta]})=\int_0^{\delta}|\gamma_\delta'|(\sigma) d\sigma.
 \]
 It is clear that $\ell_\delta\geq d(x, \gamma_\delta(\delta))$. By (H2), there exists a convex coercive function $m\in C(\R)$ such that 
\[
\frac{1}{\delta}\int_0^{\delta}L(\gamma_\delta(\sigma),t-\sigma,|\gamma_\delta'|(\sigma))d\sigma \geq \frac{1}{\delta}\int_0^{\delta} m(|\gamma_\delta'|(\sigma))d\sigma.
\]
By \eqref{c2m eq1} and the Lipschitz continuity of $u$, we obtain 
\begin{equation}\label{c2m eq-add1}
    \frac{1}{\delta}\int_0^{\delta} m(|\gamma_\delta'|(\sigma))d\sigma\leq \dfrac{u(x,t)-u(\gamma_\delta(\delta),t-\delta)}{\delta}+\delta\leq C\left(1+\dfrac{\ell_\delta}{\delta}\right)+\delta. 
\end{equation}
 It follows from Jensen's inequality that 
\[
 m\left(\frac{\ell_\delta}{\delta} \right)=m\left(\frac{1}{\delta} \int_0^{\delta}|\gamma_\delta'|(\sigma)\, d\sigma\right) \leq \frac{1}{\delta}\int_0^{\delta} m(|\gamma_\delta'|(\sigma))d\sigma\leq C\left(1+\dfrac{\ell_\delta}{\delta}\right)+\delta, 
 \]
 which by the coercivity of $m$ in \eqref{L-cocercive}, there exists $R>1$ such that $\ell_\delta\leq R\delta$ for all $0<\delta<1$ small. This yields $\ell_\delta\to 0$ as $\delta\to 0$. 

Noting that $q\mapsto L(x, t, q)$ is nondecreasing for all $(x, t)\in \Y_T$, by \eqref{c2m eq1} we have
\begin{equation}\label{c2m eq6}
\begin{aligned}
        &\dfrac{v(z)-v(\gamma_\delta(\delta),t-\delta)}{\delta}-L\left(x,t,\dfrac{d(x,\gamma_\delta(\delta))}{\delta}\right)\\
        &\geq k-\delta+\frac{1}{\delta}\int_0^{\delta}L\left(\gamma_\delta(\sigma),t-\sigma, |\gamma_\delta'|(\sigma)\right)d\sigma -L\left(x,t, \frac{\ell_\delta}{\delta}\right). 
        \end{aligned}
\end{equation}
Since \eqref{L-lip1} in (H4) yields
\[
L\left(\gamma_\delta(\sigma),t-\sigma, |\gamma_\delta'|(\sigma)\right)\geq L(x, t, |\gamma_\delta'|(\sigma))-\omega_L(\ell_\delta+\delta)\left(1+|m(|\gamma_\delta'|(\sigma))|\right)
\]
for $\sigma\in(0,\delta)$, we then have 
\[
\begin{aligned}
&\frac{1}{\delta}\int_0^{\delta}L\left(\gamma_\delta(\sigma),t-\sigma, |\gamma_\delta'|(\sigma)\right)\, d\sigma\\
&\geq \frac{1}{\delta}\int_0^{\delta}L(x, t, |\gamma_\delta'|(\sigma))\, d\sigma -\omega_L(\ell_\delta+\delta)\left(\frac{1}{\delta}\int_0^{\delta}|m(|\gamma_\delta'|(\sigma))|\, d\sigma+1\right)\\
\end{aligned}
\]
Let $m_0=\inf_{[0,\infty)} m$. 
Note that $|m(q)|\leq m(q)+2|m_0|$ for all $q\geq 0$. Applying \eqref{c2m eq-add1} and Jensen's inequality to the convex function $q\mapsto L(x, t, q)$, we obtain 
\[
\frac{1}{\delta}\int_0^{\delta}L\left(\gamma_\delta(\sigma),t-\sigma, |\gamma_\delta'|(\sigma)\right)d\sigma \geq L\left(x,t, \frac{\ell_\delta}{\delta}\right) -2\omega_L(\ell_\delta+\delta)\left(CR+|m_0|+1\right). 
\] 
By \eqref{c2m eq6}, we thus deduce that
\[
\dfrac{v(z)-v(\gamma_\delta(\delta),t-\delta)}{\delta}-L\left(x,t,\dfrac{d(x,\gamma_\delta(\delta))}{\delta}\right)\geq k-\delta-2\omega_L(\ell_\delta+\delta)(CR+|m_0|+1). 
\]
Letting $\delta\to 0$ yields $|D^-_Lv|(z)\geq k$, which verifies the Monge supersolution property of $u$. 
\end{proof}

Since \eqref{initial-conti} holds for the function $u$ defined by \eqref{value_funct}, the well-posedness result in Theorem~\ref{thm:intro2} follows from Theorem~\ref{thm:comparison-gen} and Theorem~\ref{thm:gen-Monge}.



\section{Equivalence with curve-based and slope-based solutions}\label{sec:equivalence}

In this last section, we discuss the relation between curve-based solutions, slope-based solutions, and Monge solutions. The equivalence of these notions is expected at least in the case of typical Hamiltonians. We will rigorously prove the equivalence for \eqref{eq time-ind}, where the Hamiltonian $H$ does not depend on $t$.

\subsection{Equivalence with curve-based viscosity solutions}

Consider \eqref{eq time-ind} with initial value \eqref{initial}. Assume that $H\in C(\X\times [0, \infty))$ satisfies \eqref{H-cocercive}. Under the assumptions (H2)--(H4) for $L$ given by \eqref{legendre0} and bounded Lipschitz initial value, we have shown in Theorem~\ref{thm:intro2} that there exists a unique Monge solution to the initial value problem. (We do not need to explicitly impose (H5), since $H, L$ are independent of $t$.)

Note that the unique Monge solution $u$ is represented by the formula \eqref{value_funct} with the dependence of $L$ on $t$ dropped. It coincides with the representation formula \eqref{value time-ind} of the unique curve-based solution of \eqref{eq time-ind} and \eqref{initial}; see Proposition \ref{curve_subsol} and Proposition \ref{curve_supersol}. We thus can easily establish the equivalence between the notions of Monge solutions and curve-based viscosity solutions in this case. 

\begin{thm}[Equivalence of Monge and curve-based solutions for superlinear Hamiltonians]\label{thm:equiv-cm}
Let $(\X,d)$ be a complete length space and $T>0$. Assume that $H\in C(\X\times [0, \infty))$ satisfies (H1) and $L\in C(\X\times [0, \infty))$ defined by \eqref{legendre0} satisfies (H2)--(H4) (without dependence on $t$). Let $u_0\in\lip(\X)$ be bounded and $u\in C(\X\times [0, T))$ satisfy \eqref{initial} in the sense of \eqref{initial-conti0}. Then, $u$ is a Monge solution of \eqref{eq time-ind} if and only if it is a curve-based viscosity solution of \eqref{eq time-ind}. 
\end{thm}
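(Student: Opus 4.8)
The plan is to deduce the equivalence from the observation that, in the time-independent setting, \emph{both} notions of solution are pinned down by one and the same representation formula. Writing
\[
U(x,t):=\inf_{\gamma\in \mathcal{A}_x(\X)}\left\{\int_0^t L(\gamma(\sigma),|\gamma'|(\sigma))\,d\sigma +u_0(\gamma(t))\right\},\qquad (x,t)\in \X\times[0,T),
\]
I note that $U$ is simultaneously the value function \eqref{value_funct} (with the $t$-dependence of $L$ dropped) and the value function \eqref{value time-ind} used in \cite{Na1}; the two formulas are literally identical once $L$ is independent of $t$, and both are built over the common admissible class $\mathcal{A}_x(\X)$ from \eqref{admissible_set_gen}. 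The whole argument then reduces to the two separate well-posedness theories identifying their unique solution with this same $U$.

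First I would verify that our hypotheses feed both frameworks. Since $H$ and $L$ do not depend on $t$, condition (H5) holds trivially (with $C_T=0$), so all hypotheses of Theorem~\ref{thm:intro2} are met; consequently Theorem~\ref{thm:gen-Monge} and the comparison principle Theorem~\ref{thm:comparison-gen} show that $U$ is the unique bounded Monge solution of \eqref{eq time-ind}--\eqref{initial}, and moreover $U\in\lip(\X\times[0,T))$. On the other hand, because $u_0\in\lip(\X)$ is bounded it is in particular bounded and uniformly continuous, and (H1)--(H3) imply the (weaker) assumptions of \cite[Theorem~4.2]{Na1}; hence the same $U$, now read off from \eqref{value time-ind}, is the unique curve-based viscosity solution of \eqref{eq time-ind}--\eqref{initial}.

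The conclusion is then a matter of combining the two uniqueness statements. If $u\in C(\X\times(0,T))$ satisfies \eqref{initial-conti0} and is a curve-based solution, then uniqueness in \cite{Na1} forces $u=U$, and $U$ is a Monge solution by the first step. Conversely, if $u$ is a (bounded) Monge solution satisfying \eqref{initial-conti0}, then the uniqueness of bounded Monge solutions from the first step forces $u=U$, and $U$ is a curve-based solution by the second step. Together these give the desired ``if and only if''.

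The logic is essentially corollary-level, so the real care lies in the hypothesis- and class-matching rather than in any new estimate. I expect the main obstacle to be the regularity/boundedness bookkeeping needed to invoke the Monge comparison principle: Theorem~\ref{thm:comparison-gen} requires the supersolution to be \emph{globally} Lipschitz, which is available here only because the candidate $U$ is Lipschitz. One must therefore phrase each application of comparison so that $U$ plays the Lipschitz role, and confirm that any Monge solution obeying \eqref{initial-conti0} is bounded, so that the comparison applies at all and pins it to $U$; once this is arranged, the equivalence follows.
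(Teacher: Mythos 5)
Your proposal is correct and takes essentially the same route as the paper: the paper's own proof likewise observes that, since $L$ is time-independent, the unique bounded Monge solution from Theorem~\ref{thm:intro2} (existence via Theorem~\ref{thm:gen-Monge}, uniqueness via Theorem~\ref{thm:comparison-gen}) and the unique curve-based solution from \cite{Na1} are both given by the one common value-function formula \eqref{value_funct}$=$\eqref{value time-ind}, so the two uniqueness theories pin any solution of either type satisfying \eqref{initial-conti0} to that same function. Your closing caveat about the asymmetry of Theorem~\ref{thm:comparison-gen} (the Lipschitz hypothesis sits on the supersolution, so $U$ cannot occupy the Lipschitz slot in the direction $U\leq u$) is a genuine subtlety, but the paper's own one-paragraph proof glosses over exactly the same point, so your write-up is no less complete than the original.
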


We actually obtain the following consequences regarding subsolution and supersolution properties under the same assumptions as in Theorem~\ref{thm:equiv-cm}:
\begin{itemize}
    \item Any curve-based subsolution $u\in \lip(\X\times (0, T))$ of \eqref{eq time-ind} is a Monge subsolution, as shown in Proposition \ref{curve_subsol} and Proposition \ref{prop gen-monge1}. 
    \item Any curve-based supersolution $u\in \lip(\X\times (0, T))$ of \eqref{eq time-ind} is a Monge supersolution, as shown in Proposition \ref{curve_supersol} and Proposition \ref{prop gen-monge2}. 
\end{itemize}
It is not clear to us whether the reverse implication holds. 
In the case of \eqref{eq special0}, we can obtain an equivalence result similar to Theorem~\ref{thm:equiv-cm}. 

\begin{thm}[Equivalence of Monge and curve-based solutions of eikonal-type equation]\label{thm:equiv-eikonal1}
    Let $(\X,d)$ be a complete length space and $T>0$. Assume that $f: \X\to \R$ is bounded and continuous. Let $u_0\in\lip(\X)$ be bounded and $u\in C(\X\times [0, T))$ satisfy \eqref{initial} in the sense of \eqref{initial-conti0}. Then, $u$ is a Monge solution of \eqref{eq special0} if and only if it is a curve-based viscosity solution of \eqref{eq special0}. 
\end{thm}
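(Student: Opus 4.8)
The plan is to produce a single function that is simultaneously a Monge solution and a curve-based viscosity solution of \eqref{eq special0}, and then to close both implications by uniqueness. Bounded Monge solutions of the initial value problem are unique by Theorem~\ref{thm:comparison1} (which needs only that $f$ be bounded below), and curve-based viscosity solutions are unique by \cite[Theorem~4.8]{Na1}. Granting a common witness $w$, any $u\in C(\X\times[0,T))$ obeying \eqref{initial-conti0} that is a Monge solution must equal $w$ and is therefore curve-based, while any such $u$ that is curve-based must equal $w$ and is therefore Monge.

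The candidate is the value function
\[
w(x,t)=\inf_{\gamma\in\Gamma_x}\left\{u_0(\gamma(t))+\int_0^t f(\gamma(\sigma))\,d\sigma\right\},\quad (x,t)\in\X\times[0,T),
\]
with $\Gamma_x$ as in \eqref{eikonal-curve}. First I would identify $w$ with the curve-based solution. For the eikonal Lagrangian \eqref{eikona-hl}, any admissible curve with $|\gamma'|>1$ on a set of positive measure forces the integrand in \eqref{value time-ind} to equal $+\infty$, so the infimum there is effectively restricted to $\A^\ast_x$, on which $L(\gamma(\sigma),|\gamma'|(\sigma))=f(\gamma(\sigma))$; by Remark~\ref{special_cs_Monge} the infimum over $\A^\ast_x$ coincides with the one over $\Gamma_x$ defining $w$. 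Thus \eqref{value time-ind} reduces to $w$, and the representation in \cite[Theorem~4.2]{Na1} identifies $w$ as the unique curve-based viscosity solution of \eqref{eq special0} with \eqref{initial}.

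Next I would verify that $w$ is a Monge solution. Since $w$ is precisely the value function \eqref{eq value_f0} with time-independent $f$, Proposition~\ref{prop existence_2} applies as soon as the Lipschitz regularity of $w$ is available. As $f$ is only continuous here, Proposition~\ref{prop bound-lip} cannot be invoked directly; instead I would extract Lipschitz continuity from the finite propagation speed $|\gamma'|\le1$. For a near-optimal curve $\gamma$ from $x$, prepending an arc-length path from $y$ to $x$ and truncating $\gamma$ yields a competitor from $y$ whose endpoint lies within $d(x,y)$ of $\gamma(t)$ and whose $f$-integral differs from that of $\gamma$ by at most $2\sup_\X|f|\,d(x,y)$; together with $u_0\in\lip(\X)$ this gives a spatial Lipschitz bound depending only on the Lipschitz constant of $u_0$ and on $\sup_\X|f|$, and the dynamic programming principle of Lemma~\ref{lem dpp1} then supplies the bound in time. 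Hence $w\in\lip(\X\times[0,T))\subset S(\Y_T)$, and the subslope computation in the proof of Proposition~\ref{prop existence_2}---which uses only the continuity of $f$---shows that $v=w+kt$ satisfies $|D^-v|=f+k$, so $w$ is a Monge solution.

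The main obstacle is the degeneracy of the eikonal Lagrangian: being infinite-valued, it lies outside the superlinear framework behind Theorem~\ref{thm:equiv-cm}, so that result cannot be cited directly. Concretely, I must justify both that the curve-based formula \eqref{value time-ind} collapses onto the finite-speed value function $w$ and that $w$ remains Lipschitz under the weaker hypothesis that $f$ is merely continuous. Once $w$ is exhibited as a common Monge and curve-based solution, the two uniqueness statements close the equivalence in both directions.
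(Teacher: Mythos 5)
Your proposal is correct and follows essentially the same route as the paper's proof: exhibit the value function as simultaneously the unique bounded Monge solution of the initial value problem (via Remark~\ref{special_cs_Monge} and the existence machinery of Section~\ref{sec:eikonal}) and the unique curve-based viscosity solution (via the eikonal Lagrangian \eqref{eikona-hl}, which collapses \eqref{value time-ind} onto the unit-speed curve class, together with \cite[Theorem~4.2]{Na1}), then close both implications by the two uniqueness theorems. The one point where you diverge is based on a misreading: the ``main obstacle'' you identify is illusory, since $f$ in \eqref{eq special0} is independent of $t$, so the time-Lipschitz hypothesis \eqref{lip-f-time} holds trivially (its left-hand side vanishes identically), and Proposition~\ref{prop bound-lip}, Proposition~\ref{prop existence_2}, and Theorem~\ref{thm:into1} apply verbatim to $f(x,t):=f(x)$ with merely bounded continuous $f$. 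Your substitute derivation of the Lipschitz bound from the finite propagation speed (prepend-and-truncate for the spatial estimate, then the dynamic programming principle of Lemma~\ref{lem dpp1} for the temporal one) is valid, so nothing breaks; it is simply an unnecessary reproof of what the paper's hypotheses already cover.
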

The proof is based on Remark \ref{special_cs_Monge}. In fact, besides \eqref{eq value_f0}, \eqref{eikonal-value-nakayasu} also provides a representation formula for the unique Monge solution of the initial value problem. By \eqref{eikona-hl}, we see that the curve class $\A^\ast_x$ in \eqref{eikonal-value-nakayasu} is consistent with $\A_x$ used in the characterization of curve-based viscosity solutions in the case of \eqref{eq special0}. Hence, for the initial value problem for \eqref{eq special0}, the curve-based viscosity solution is equivalent to the Monge solution.

\subsection{Equivalence with slope-based viscosity solutions}

Let us now proceed to discuss the relation between slope-based and Monge solutions of \eqref{eq time-ind}. Note that by (H1), 
\begin{equation}\label{HL0}
    H(x, t, 0)=-L(x, t, 0) \quad \text{for all $x\in \X$, $t\in (0, T)$.}
\end{equation}

\begin{prop}[Slope-based solution property implied by Monge solutions]\label{semi-eqiuv_1}
    Let $(\X,d)$ be a complete length space and $T>0$. Assume that $H\in C(\X\times [0, \infty))$ satisfies (H1). Let $L\in C(\X\times [0, \infty))$ be defined as in \eqref{legendre0}. If $u\in S(\Y_T)$ is a Monge solution (resp., subsolution, supersolution) of \eqref{eq time-ind} in the sense of Definition \ref{def monge general}, then it is a slope-based viscosity solution (resp., subsolution, supersolution) of \eqref{eq time-ind} in the sense of Definition \ref{def metric special}.
\end{prop}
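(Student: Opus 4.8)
The plan is to verify separately the two differential inequalities in Definition~\ref{def metric special}, using the Legendre duality \eqref{legendre} between $H$ and $L$ together with the expression \eqref{def DL-2} of the subslope. Throughout I fix $k\ge 0$ with $u\in S_k(\Y_T)$ and set $v=u+kt\in S_0(\Y_T)$ as in \eqref{unknown-change}; since $u\in\lipl(\Y_T)$ is continuous, the local boundedness and semicontinuity requirements are automatic. Because $p\mapsto H(x,p)$ is nondecreasing, the reductions $H_a(x,p)=H(x,p-a)$ and $H^a(x,p)=H(x,p+a)$ recorded after \eqref{H-extension} turn the target inequalities at a contact point $(x_0,t_0)$ into $\partial_t\psi+H\big(x_0,|\nabla\psi_1|-|\nabla\psi_2|^*\big)\le 0$ (subsolution) and $\partial_t\psi+H\big(x_0,|\nabla\psi_1|+|\nabla\psi_2|^*\big)\ge 0$ (supersolution).

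For the subsolution case, assume $|D_L^-v|\le k$ everywhere and let $\psi=\psi_1+\psi_2\in\underline{\mathcal{C}}$ be such that $u-\psi$ has a local maximum at $(x_0,t_0)$. I would fix an arbitrary $q_0\ge 0$ and \emph{choose} an approaching sequence $(y_m,s_m)\to(x_0,t_0)$ with $s_m<t_0$ along which the difference quotient of $v$ is large. For $q_0>0$, by the defining property $|\nabla\psi_1|=|\nabla^-\psi_1|$ of $\underline{\mathcal{C}}$ I pick $y_m\to x_0$ realizing the spatial subslope, i.e. $(\psi_1(x_0,t_0)-\psi_1(y_m,t_0))/d(x_0,y_m)\to|\nabla\psi_1|(x_0,t_0)$, and then set $s_m:=t_0-d(x_0,y_m)/q_0$ so that $d(x_0,y_m)/(t_0-s_m)\equiv q_0$ (for $q_0=0$ take $y_m=x_0$, $s_m\uparrow t_0$). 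Splitting each increment into a spatial part at time $t_0$ and a temporal part, handling the temporal parts through $\int_{s_m}^{t_0}\partial_t\psi_i(y_m,\tau)\,d\tau$ and the continuity of $\partial_t\psi_1,\partial_t\psi_2$, and bounding the $\psi_2$ spatial part by its full slope with $|\nabla\psi_2|(x_0,t_0)\le|\nabla\psi_2|^*(x_0,t_0)$, one gets
\[
\liminf_{m\to\infty}\frac{\psi(x_0,t_0)-\psi(y_m,s_m)}{t_0-s_m}\ \ge\ \partial_t\psi(x_0,t_0)+\big(|\nabla\psi_1|-|\nabla\psi_2|^*\big)(x_0,t_0)\,q_0.
\]
Combining with the local-maximum inequality $v(x_0,t_0)-v(y_m,s_m)\ge\psi(x_0,t_0)-\psi(y_m,s_m)+k(t_0-s_m)$ and the constraint $|D_L^-v|(x_0,t_0)\le k$ forces $0\ge\partial_t\psi(x_0,t_0)+\big(|\nabla\psi_1|-|\nabla\psi_2|^*\big)(x_0,t_0)\,q_0-L(x_0,q_0)$. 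Since $q_0\ge 0$ is arbitrary, taking the supremum and invoking \eqref{legendre} (with the convention $H(x,p)=H(x,0)$ for $p<0$) yields the subsolution inequality.

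For the supersolution case, assume $|D_L^-v|\ge k$ everywhere and let $u-\psi$ have a local minimum at $(x_0,t_0)$ for $\psi\in\overline{\mathcal{C}}$. Here I \emph{cannot} choose the direction; instead I take the sequence $(y_n,s_n)\to(x_0,t_0)$, $s_n<t_0$, furnished by the $\limsup$ in \eqref{def DL-2} realizing $|D_L^-v|(x_0,t_0)\ge k$, and write $q_n=d(x_0,y_n)/(t_0-s_n)$. The local minimum gives $v(x_0,t_0)-v(y_n,s_n)\le\psi(x_0,t_0)-\psi(y_n,s_n)+k(t_0-s_n)$, and now only the \emph{full-slope upper bounds} for $\psi_1$ and $\psi_2$ (so the special structure of $\psi_1$ is not even needed) together with the continuity of $\partial_t\psi$ give $\frac{\psi(x_0,t_0)-\psi(y_n,s_n)}{t_0-s_n}\le\partial_t\psi(x_0,t_0)+\big(|\nabla\psi_1|+|\nabla\psi_2|^*\big)(x_0,t_0)\,q_n+o(1)$. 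Subtracting $L(x_0,q_n)$ and using $p\,q_n-L(x_0,q_n)\le\sup_{q\ge 0}\{pq-L(x_0,q)\}=H(x_0,p)$ with $p=\big(|\nabla\psi_1|+|\nabla\psi_2|^*\big)(x_0,t_0)$, the lower bound $|D_L^-v|(x_0,t_0)\ge k$ yields $0\le\partial_t\psi(x_0,t_0)+H(x_0,p)$, which is the supersolution inequality. The Monge-solution statement follows by combining the two cases.

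The main obstacle is the subsolution construction: one must simultaneously realize the steepest spatial descent direction of $\psi_1$ — available \emph{precisely because} $|\nabla\psi_1|=|\nabla^-\psi_1|$ — and prescribe the exact space-time ratio $q_0$, which in a general complete length space requires choosing $y_m$ from the subslope limit and slaving $s_m$ to $d(x_0,y_m)$. The accompanying estimate must show all error terms (temporal increments, transfer of slope evaluations to $(x_0,t_0)$ by continuity, and the passage from pointwise slope to its envelope $|\nabla\psi_2|^*$) are $o(t_0-s_m)$, so that the iterated limit (first $m\to\infty$, then $\sup_{q_0\ge 0}$) is legitimate. By contrast, the supersolution direction is softer, since the approaching sequence is dictated by the subslope and only one-sided slope bounds enter.
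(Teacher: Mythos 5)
Your proposal is correct, and while your supersolution argument essentially coincides with the paper's (pick the sequence realizing $|D_L^-v|(z_0)$, split the increment into temporal and spatial parts, and apply the pointwise Legendre inequality $pq\le H(x_0,p)+L(x_0,q)$ before passing to the limit), your subsolution argument takes a genuinely different and cleaner route. The paper splits the subsolution case according to whether $|\nabla\psi_1|(z_0)-|\nabla\psi_2|(z_0)$ lies in the flat region of $H$: it introduces $p^*=\max\{p\ge 0: H(x_0,p)=H(x_0,0)\}$, treats the flat case by a purely temporal sequence $x=x_0$, $t\to t_0-$, and in the steep case picks near-maximizers $q_j$ in \eqref{legendre} for the quotient values $p_j$, proves $q_j\ge\hat q>0$ by contradiction (this is where $\hat p>p^*$ is needed), and only then slaves $t_j$ to the ratio $d(x_0,x_j)/(t_0-t_j)=q_j$. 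You instead reverse the quantifiers: you fix an arbitrary $q_0\ge 0$, build the space-time sequence with that exact ratio (slaving $s_m$ to $d(x_0,y_m)/q_0$), obtain the one-parameter family of affine inequalities $0\ge\partial_t\psi(z_0)+pq_0-L(x_0,q_0)$ with $p=|\nabla\psi_1|(z_0)-|\nabla\psi_2|^*(z_0)$, and recover $H(x_0,p)$ by taking $\sup_{q_0\ge 0}$ at the end via \eqref{legendre} (and the observation that this supremum equals $H(x_0,0)=-L(x_0,0)$ when $p<0$, matching the convention). This eliminates both the $p^*$ dichotomy and the nondegeneracy argument $q_j\not\to 0$, at no cost in generality, since $L(x_0,q_0)<\infty$ for every $q_0$ under the coercivity in (H1); your realizing-sequence step for $\psi_1$ is also legitimate in the degenerate case $|\nabla\psi_1|(z_0)=0$, because then $|\nabla\psi_1|=|\nabla^-\psi_1|=0$ forces every sequence to work. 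One small imprecision in your supersolution sketch: when $q_n=d(x_0,y_n)/(t_0-s_n)$ is unbounded, the slope errors give $\big(p'+o(1)\big)q_n$ rather than $p'q_n+o(1)$, so you should either apply the Legendre inequality with the actual difference quotients (as the paper does) or write $\big(p'+\epsilon_n\big)q_n-L(x_0,q_n)\le H(x_0,p'+\epsilon_n)$ and conclude by continuity and monotonicity of $H$; this is an immediate repair, not a gap.
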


\begin{proof}
    (Supersolution) Suppose that $u\in S(\Y_T)$ be a Monge supersolution of \eqref{eq time-ind}. Fix $z_0=(x_0,t_0)\in \Y_T$ arbitrarily. There exists $k\geq0$ such that $u\in S_k(\Y_T)$ and $v\in  S_0(\Y_T)$ given by \eqref{unknown-change} satisfies 
    \begin{equation}\label{equiv-sm eq1}
    |D_L^-v|(z_0)\geq k. 
    \end{equation}
    For any $\psi=\psi_1+\psi_2\in \overline{\mathcal{C}}$ as in Definition \ref{def s-sol} such that $u-\psi$ attains a local minimum at $z_0$, then
    \[
    u(z)-\psi_1(z)-\psi_2(z)\geq u(z_0)-\psi_1(z_0)-\psi_2(z_0)
    \]
    for all $z=(x, t)\in\Y_T$ close to $z_0$. Therefore, for $x\neq x_0$ and $t<t_0$,
    \[
    \psi_1(z_0)-\psi_1(z)+\psi_2(z_0)-\psi_2(z)\geq u(z_0)-u(z)=v(z_0)-v(z)-k(t_0-t),
    \]
    which implies
    \[
    \dfrac{\psi(x,t_0)-\psi(x, t)}{t_0-t}+\dfrac{d(x_0, x)}{t_0-t}\left[\sum_{i=1}^2\dfrac{|\psi_i(x_0,t_0)-\psi_i(x,t_0)|}{d(x_0,x)}\right]\geq \dfrac{v(z_0)-v(z)}{t_0-t}-k.
    \]
    In view of \eqref{legendre0}, we get
    \[
    \begin{aligned}
    \dfrac{\psi(x,t_0)-\psi(x, t)}{t_0-t}+ & H\left(x_0, t_0,\left[\sum_{i=1}^2\dfrac{|\psi_i(x_0,t_0)-\psi_i(x, t_0)|}{d(x_0, x)}\right]\right)\\
    &\geq\ \dfrac{v(z_0)-v(z)}{t_0-t}-L\left(x_0,t_0,\dfrac{d(x_0, x)}{t_0-t}\right)-k.
    \end{aligned}
    \]
    Letting $d(x_0, x)\to 0, t\to t_0-$ on both sides, we obtain
    \[
    \psi_t(z_0)+H(x_0, t_0,|\nabla\psi_1|(z_0)+|\nabla\psi_2|(z_0))\geq |D_L^-v|(z_0)-k.
    \]
    Applying \eqref{equiv-sm eq1}, we end up with 
    \[
    \psi_t(z_0)+H^{|\nabla\psi_2|^*(z_0)}(x_0,t_0,|\nabla\psi_1|(z_0))\geq 0,
    \]
    which, by the arbitrariness of $z_0\in \Y_T$, implies that $u$ is a slope-based supersolution of \eqref{eq time-ind}. 
    
    \noindent (Subsolution) Suppose now that $u\in S(\Y_T)$ be a Monge subsolution of \eqref{eq time-ind}. Let $z_0=(x_0,t_0)\in \Y_T$. There exists $k\geq0$ such that $u\in S_k(\Y_T)$ and $v\in S_0(\Y_T)$ given by \eqref{unknown-change} satisfies $|D_L^-v|(z_0)\leq k$. 
    For any $\psi=\psi_1+\psi_2\in \underline{\mathcal{C}}$ as in Definition \ref{def s-sol} such that $u-\psi$ attains a local maximum at $z_0$, then
    \[
    u(z)-\psi_1(z)-\psi_2(z)\leq u(z_0)-\psi_1(z_0)-\psi_2(z_0)
    \]
    for all $z=(x, t)\in\Y_T$ close to $z_0$. In terms of $v$, we obtain
    \begin{equation}\label{equiv-sm eq2}
    \dfrac{\psi(x_0, t_0)-\psi(x, t)}{t_0-t}\leq \dfrac{v(x_0, t_0)-v(x, t)}{t_0-t}-k.
    \end{equation}
    for all $z=(x, t)\in\Y_T$ near $z_0$ with $t<t_0$. 
    Since $p\mapsto H(x, t, p)$ is nondecreasing, we can take 
    \[
    p^*=\max\{p\geq 0 : H(x_0,t_0,p)=H(x_0,t_0,0)\}.
    \]    
    If $|\nabla\psi_1|(z_0)-|\nabla\psi_2|(z_0)\leq p^*$, then by \eqref{H-extension} and \eqref{HL0} we have 
    \[
    H_{|\nabla\psi_2|^*(z_0)}(x_0, t_0,|\nabla\psi_1|(z_0))=H(x_0, t_0, 0)= -L(x_0, t_0, 0).
    \]
    Then, for any $t<t_0$ close to $t_0$ and $x=x_0$, we get from \eqref{equiv-sm eq2}
\[
 \dfrac{\psi(x_0, t_0)-\psi(x_0, t)}{t_0-t}+H_{|\nabla\psi_2|^*(z_0)}\left(x_0,t_0,|\nabla\psi_1|(z_0)\right)   \leq \dfrac{v(x_0, t_0)-v(x_0, t)}{t_0-t}-L(x_0,t_0,0)-k. 
\]
    Letting $t\to t_0$, we obtain
    \begin{equation}\label{equiv-sm eq5}
    \psi_t(x_0,t_0)+H_{|\nabla\psi_2|^*(x_0,t_0)}\left(x_0,t_0,|\nabla\psi_1|(x_0,t_0)\right)\leq |D^-_Lv|(z_0)-k\leq 0. 
    \end{equation}
    
    Let us discuss the case when 
    \begin{equation}\label{equiv-sm eq3}
    |\nabla\psi_1|(z_0)-|\nabla\psi_2|(z_0)> p^*.
    \end{equation} 
    Since $|\nabla^-\psi_1|(z_0)=|\nabla\psi_1|(z_0)$, there exists a sequence $x_j\neq x$ such that
    \[ 
    \dfrac{\psi_1(x_0, t_0)-\psi_1(x_j, t_0)}{d(x_0, x_j)}\to |\nabla\psi_1|(x_0, t_0)\quad\text{as $j\to \infty$. }
    \]
    Let 
    \[
    p_j=\dfrac{\psi_1(x_0, t_0)-\psi_1(x_j, t_0)}{d(x_0, x_j)}-\dfrac{|\psi_2(x_0, t_0)-\psi_2(x_j, t_0)|}{d(x_0, x_j)}.
    \]
    Suppose that $p_j\to \hat{p}$ as $j\to \infty$ via a subsequence for some $\hat{p}\geq 0$. The relation \eqref{equiv-sm eq3} yields $\hat{p}>p^*$. In particular, we have $H(x_0, t_0, \hat{p})>H(x_0, t_0, 0)$.

    Recalling the property \eqref{legendre}, we can find a sequence $q_j\geq 0$ such that
    \begin{equation}\label{equiv-sm eq4}
    H(x_0,t_0,p_j)\leq p_j q_j-L(x_0,t_0,q_j)+\dfrac{1}{j}. 
    \end{equation}
    Note that there exists $\hat{q}>0$ such that $q_j\geq \hat{q}$ for all $j\geq 1$ large, for otherwise we have a subsequence of $q_j$ converging to $0$, which yields a contradiction 
    \[
    H(x_0, t_0, \hat{p})\leq -L(x_0, t_0, 0)=H(x_0, t_0, 0). 
    \]
    Next, we choose $t_j<t_0$ such that $d(x_0,x_j)=q_j(t_0-t_j)$. It is clear that $t_j\to t_0-$ as $j\to\infty$. Moreover, noting that
    \[
    \begin{aligned}
    \dfrac{\psi(x_0,t_0)-\psi(x_j,t_j)}{t_0-t_j} &=\dfrac{\psi(x_j,t_0)-\psi(x_j, t_j)}{t_0-t_j}+\dfrac{\psi(x_0,t_0)-\psi(x_j, t_0)}{t_0-t_j} \\
    &\geq \dfrac{\psi(x_j,t_0)-\psi(x_j, t_j)}{t_0-t_j}+p_jq_j,
    \end{aligned}
    \]
    by \eqref{equiv-sm eq4} and \eqref{equiv-sm eq2} we have 
    \[
    \begin{aligned}
       \dfrac{\psi(x_j,t_0)-\psi(x_j, t_j)}{t_0-t_j}+H(x_0,t_0,p_j)& \leq \dfrac{\psi(x_j,t_0)-\psi(x_j, t_j)}{t_0-t_j}+p_jq_j-L(x_0,t_0,q_j)+\dfrac{1}{j}\\
      & \leq \dfrac{v(x_0,t_0)-v(x_j, t_j)}{t_0- t_j}-L\left(x_0,t_0,\frac{d(x_0,x_j)}{t_0-t_j}\right)-k+\dfrac{1}{j}.
    \end{aligned}
    \]
    Letting $j\to \infty$, we are led to \eqref{equiv-sm eq5} again. This concludes the proof.
\end{proof}

The reverse implication for Proposition \ref{semi-eqiuv_1} is not clear to us. It seems challenging to directly show that any slope-based subsolution (resp., slope-based supersolution) is a Monge subsolution (resp., Monge supersolution). 
However, thanks to the comparison principle in Theorem~\ref{unique_slope} for slope-based solutions, we are able to show the equivalence of the two notions of solutions for the initial value problem. We restrict our study of \eqref{eq time-ind} with $H$ in the form \eqref{special-H} satisfying all of the assumptions in Theorem~\ref{unique_slope}.

\begin{thm}[Equivalence of Monge and slope-based solutions for superlinear Hamiltonians]\label{thm:equivalence1}
    Let $(\X,d)$ be a complete length space and $T>0$. Let $H$ be given by \eqref{special-H} with $\tilde{H}$ satisfying the conditions (1)--(4) in Theorem~\ref{unique_slope}. Assume in addition that $u_0: \X\to \R$ is bounded Lipschitz and $f: \X\to \R$ is bounded uniformly continuous. Suppose that $u\in C(\X\times [0, T))$ satisfies \eqref{initial} in the sense of \eqref{initial-conti0}. Then, $u$ is a Monge solution of \eqref{eq time-ind} if and only if it is a slope-based viscosity solution of \eqref{eq time-ind}. 
\end{thm}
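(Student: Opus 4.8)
The plan is to prove the two implications by genuinely different means, reflecting the asymmetry already visible in Proposition~\ref{semi-eqiuv_1}. The implication ``Monge $\Rightarrow$ slope-based'' is essentially free: if $u$ is a Monge solution of \eqref{eq time-ind}, then $u\in S(\Y_T)$ by Definition~\ref{def monge general}, and Proposition~\ref{semi-eqiuv_1} immediately yields that $u$ is a slope-based viscosity solution. Thus the entire content of the theorem lies in the reverse implication, which, as noted after Proposition~\ref{semi-eqiuv_1}, cannot be obtained by matching the two notions pointwise. My approach is instead a \emph{uniqueness bridge}: I will exhibit a single function that is simultaneously a Monge solution and a slope-based solution with the prescribed initial data, and then invoke the comparison principle for slope-based solutions, Theorem~\ref{unique_slope}, to force any slope-based solution to coincide with it.

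First I would verify that the structural hypotheses (1)--(4) on $\tilde H$ in Theorem~\ref{unique_slope} imply the conditions (H1)--(H5) underlying the Monge theory. Convexity and monotonicity of $p\mapsto H(x,p)=\tilde H(x,p)-f(x)$ are inherited from (1), and coercivity \eqref{H-cocercive} follows from the lower power bound in \eqref{special-H-est1} together with $\alpha>1$; this gives (H1). Passing to the Legendre transform \eqref{legendre0}, the two-sided power bound \eqref{special-H-est1} forces $L(x,q)$ to lie between two superlinear powers of $q$, producing the lower bound $m$ with superlinear growth required in (H2); the identity $L(x,0)=-H(x,0)=f(x)$ with $f$ bounded gives (H3); the modulus estimate \eqref{special-H-est2} and the uniform continuity of $f$ transfer through the transform (using that the relevant momenta are controlled by $q$) to yield the modulus in (H4); and (H5) is vacuous, since $H$ is independent of $t$ and one may take $C_T=0$. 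With (H1)--(H5) in hand, Theorem~\ref{thm:intro2}, read in the $t$-independent case (equivalently Theorem~\ref{thm:gen-Monge} together with Theorem~\ref{thm:comparison-gen}), furnishes a unique bounded Monge solution $u_M\in\lip(\X\times[0,T))$ of \eqref{eq time-ind}, \eqref{initial}, represented by the control formula \eqref{value_funct} with $L$ independent of $t$.

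Now I would run the bridge. By the forward implication applied to $u_M$ (Proposition~\ref{semi-eqiuv_1}), this $u_M$ is also a \emph{bounded} slope-based viscosity solution satisfying \eqref{initial-conti0}. Given any slope-based solution $u$ as in the statement, both $u$ and $u_M$ converge uniformly to $u_0$ as $t\to 0+$, so $\lim_{t\to 0+}\sup_\X(u_M-u)\le 0$ and $\lim_{t\to 0+}\sup_\X(u-u_M)\le 0$. Applying Theorem~\ref{unique_slope} with the bounded subsolution $u_M$ and the supersolution $u$ gives $u_M\le u$, and applying it in the opposite direction gives $u\le u_M$; hence $u=u_M$ on $\X\times[0,T)$, and since $u_M$ is a Monge solution, so is $u$. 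The one technical point requiring care is that Theorem~\ref{unique_slope} demands the \emph{subsolution} to be bounded, so the inequality $u\le u_M$ requires $u$ to be bounded; this is secured either by restricting attention to bounded slope-based solutions---the natural class, consistent with the uniform initial condition \eqref{initial-conti0}---or by first deriving an a priori bound on $u$.

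The main obstacle is precisely this reliance on uniqueness rather than on a direct local comparison: because a slope-based sub/supersolution cannot be shown pointwise to be a Monge sub/supersolution, the equivalence is genuinely global, and it rests on (i) the representation of \emph{both} solution types by the same control formula and (ii) the availability of the comparison principle Theorem~\ref{unique_slope}. This is why the full strength of the hypotheses (1)--(4), rather than merely (H1)--(H5), is imposed. Consequently the argument is tied to the initial value problem and does not by itself yield a local equivalence in $\X\times(0,T)$, a limitation already anticipated in the introduction.
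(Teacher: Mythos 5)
Your proposal is correct and follows essentially the same route as the paper: verify that conditions (1)--(4) of Theorem~\ref{unique_slope} imply (H1)--(H4) (with (H5) vacuous for $t$-independent $H$), use Proposition~\ref{semi-eqiuv_1} for the forward implication, and then identify any slope-based solution of the initial value problem with the Monge solution furnished by Theorem~\ref{thm:intro2} via the comparison principle of Theorem~\ref{unique_slope}. Your explicit flag that Theorem~\ref{unique_slope} requires the \emph{subsolution} to be bounded (so the inequality $u\leq u_M$ needs a boundedness restriction or an a priori bound on $u$) is a genuine subtlety that the paper's terser proof leaves implicit, but it does not change the structure of the argument.
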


\begin{proof} Under the assumptions of Theorem~\ref{unique_slope}, we show that $H$ and the associated Lagrangian $L$ satisfy (H1)--(H4). Indeed, (H1) follows immediately from conditions (1)--(2) of Theorem~\ref{unique_slope} and the boundedness of $f$ in $\X$. Moreover, it is easily seen that
\begin{equation}\label{special-L-est0}
L(x, q)=\sup_{p\geq 0} \left\{pq -\tilde{H}(x, p)\right\}+f(x),
\end{equation}
which in view of \eqref{special-H-est1} yields
\begin{equation}\label{special-L-est1}
\tilde{C_1} q^{\beta}+f(x)\leq L(x, q)\leq \tilde{C_2}q^{\beta}+f(x)
\end{equation}
for all $(x, q)\in \X\times [0, \infty)$, where $\beta:=\alpha/(\alpha-1)>1$ and $\tilde{C}_1, \tilde{C_2}>0$ are constants independent of $x, p$. It is then clear that $L$ satisfies (H2)  with 
\begin{equation}\label{special-L-est4}
m(q)=\tilde{C_1} q^{\beta}+\inf_{\X} f, \quad q\geq 0.     
\end{equation}
Moreover, (H3) holds due to \eqref{special-L-est1} and the boundedness of $f$. The local uniform continuity of $L$ is a consequence of \eqref{special-L-est1} and the local uniform continuity of $H$. To see this, take arbitrarily $(x_1, q_1), (x_2, q_2)\in \X\times [0, \infty)$ with $d(x_1, x_0)\leq R, d(x_2, x_0)\leq R$,  $q_1, q_2\in [0, R]$ for fixed $R>0$ and $x_0\in \X$. 

By \eqref{special-L-est0}, for any $\vep>0$, there exists $p_1\geq 0$ such that 
\[
L(x_1, q_1)\leq p_1 q_1-\tilde{H}(x_1, p_1)+f(x_1)+\vep.
\]
Since $L(x_1, q_1)\geq \inf_X f$, we deduce that 
\[
\inf_X f\leq p_1 q_1-\tilde{H}(x, p_1)+\sup_\X f +\vep,
\]
which by \eqref{special-H-est1} again yields $p_1\leq C_R:=C(R^{\frac{1}{\alpha-1}}+1)$ for some constant $C>0$ depending on $\alpha>1$ and the uniform bound of $|f|$ in $\X$. Then, the local uniform continuity of $H$ implies the existence of a modulus of continuity $\omega_{H, R}$ such that 
\[
|H(x_1, p_1)-H(x_2, p_1)|\leq \omega_{H, R}(d(x_1, x_2))
\]
It follows from the definition of $L$ again that 
\begin{equation}\label{special-L-est2}
L(x_1, q_1)- L(x_2, q_2) \leq p_1 q_1-p_1q_2-\tilde{H}(x_1, p_1)+\tilde{H}(x_2, p_1)+f(x_1)-f(x_2)+\vep, 
\end{equation}
which yields
\[
\begin{aligned}
L(x_1, q_1)- L(x_2, q_2) & \leq C_R|q_1-q_2|+|{H}(x_1, p_1)-{H}(x_2, p_1)|+\vep\\
& \leq C_R|q_1-q_2|+\omega_{H, R}(d(x_1, x_2))+\vep. 
\end{aligned}
\]
Letting $\vep\to 0$ and exchanging the roles of $(x_1, q_1)$ and $(x_2, q_2)$, we obtain the local uniform continuity of $L$. The condition \eqref{L-lip1} can be derived from \eqref{special-H-est2} in the condition (4) and the uniform continuity of $f$ in $\X$. Indeed, for any $(x_1, q), (x_2, q)\in \X\times [0, \infty)$ and any $\vep>0$, following the arguments above for \eqref{special-L-est2} we can find 
\begin{equation}\label{special-L-est3}
p\leq C(q^{\frac{1}{\alpha-1}}+1)
\end{equation}
with some $C>0$ such that 
\[
L(x_1, q)- L(x_2, q) \leq -\tilde{H}(x_1, p)+\tilde{H}(x_2, p)+f(x_1)-f(x_2)+\vep.
\]
By \eqref{special-H-est2} and the uniform continuity of $f$, we thus have 
\[
L(x_1, q)- L(x_2, q)\leq \tilde{\omega}(d(x_1, x_2))(1+p^\alpha)+\omega_f(d(x_1, x_2))+\vep,
\]
where $\omega_f$ denotes the modulus of continuity of $f$. Applying \eqref{special-L-est3} and letting $\vep\to 0$, we are led to 
\[
L(x_1, q)- L(x_2, q)\leq \tilde{\omega}(d(x_1, x_2))\left(1+C^\alpha\left(q^{\frac{1}{\alpha-1}}+1\right)^\alpha\right)+\omega_f(d(x_1, x_2)).
\]
Recall that $\beta=\alpha/(\alpha-1)$. In view of \eqref{special-L-est4} and the arbitrariness of $x_1, x_2\in \X$, $q\geq 0$, we complete the proof of \eqref{L-lip1} by taking $\omega_L=C' \tilde{\omega}+ \omega_f$ with $C'>0$ sufficiently large. Hence, $H$ and $L$ (both independent of $t$) satisfy the assumptions (H1)--(H4). 

By Proposition \ref{semi-eqiuv_1}, any Monge solution of \eqref{eq time-ind} is a slope-based solution of \eqref{eq time-ind}. It follows from Theorem~\ref{unique_slope} that with initial condition interpreted as \eqref{initial-conti0}, any Monge solution of \eqref{eq time-ind}\eqref{initial} must be the unique slope-based viscosity solution of the initial value problem. 
\end{proof}

\begin{rmk}
    We have shown above that the Hamiltonian $H$ discussed in Theorem~\ref{unique_slope} satisfies our assumptions (H1)--(H4). As a weaker set of assumptions, (H1)--(H4) allows more general Hamiltonians even in the $t$-indepedent case. One example of $H$ that satisfies (H1)--(H4) but is not covered by Theorem~\ref{unique_slope} is 
    \[
    H(x, p)=a(x) p^\alpha+b(x) p-f(x), \quad x\in \X, p\geq 0, 
    \]
    where $a, b, f: \X\to \R$ are bounded and uniformly continuous such that $\inf_\X a>0$ and $b\geq 0$ in $\X$. This highlights that our notion and well-posedness results for Monge solutions apply to a broader class of superlinear Hamilton-Jacobi equations than those considered in \cite{GaS}. 
    \end{rmk}

As an immediate consequence of Theorem~\ref{thm:equiv-cm} and Theorem~\ref{thm:equivalence1}, we obtain the equivalence between curve-based solutions, slope-based solutions, and Monge solutions of \eqref{eq time-ind} satisfying \eqref{initial-conti0} under the assumptions of Theorem~\ref{unique_slope}. In particular, the equivalence result in Theorem~\ref{thm:equiv-power} holds. One can easily verify that the power-type Hamiltonian for \eqref{eq power} introduced in Theorem~\ref{thm:equiv-power} satisfies the assumptions in Theorem~\ref{unique_slope}. 

We remark that the property \eqref{H-cocercive} is not used in the proof of Proposition~\ref{semi-eqiuv_1}. Consequently, we can similarly show that any Monge solution of \eqref{eq special} is a slope-based solution. The definition of Monge solutions to \eqref{eq special} is given in Definition~\ref{def monge special}. Adopting the comparison principle in Theorem~\ref{thm:comparison-eikonal}, we obtain the equivalence between Monge solutions and slope-based solutions of the initial value problem for \eqref{eq special}. 

\begin{thm}[Equivalence of Monge and slope-based solutions of eikonal-type equation]\label{thm:equiv-eikonal2}
    Let $(\X,d)$ be a complete length space and $T>0$. Assume that $f: \X\to \R$ is bounded and continuous. Let $u_0\in\lip(\X)$ be bounded and $u\in C(\X\times [0, T))$ satisfy \eqref{initial} in the sense of \eqref{initial-conti0}. Then, $u$ is a Monge solution of \eqref{eq special0} if and only if it is a slope-based viscosity solution of \eqref{eq special0}. 
\end{thm}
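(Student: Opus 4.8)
The plan is to prove the two implications separately, following the route indicated for the superlinear case. The implication ``Monge $\Rightarrow$ slope-based'' I would establish directly, by running the argument of Proposition~\ref{semi-eqiuv_1} in parallel, now for the eikonal Hamiltonian and with the space-time subslope $|D^-v|$ of \eqref{def D-} in place of $|D_L^-v|$. The reverse implication I would then obtain softly, by combining the forward one with the uniqueness of slope-based solutions furnished by the comparison principle in Theorem~\ref{thm:comparison-eikonal}.

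For the forward direction, suppose $u\in S(\Y_T)$ is a Monge solution of \eqref{eq special0} in the sense of Definition~\ref{def monge special}, so that for some $k\geq 0$ with $u\in S_k(\Y_T)$ and $k+\inf f\geq 0$ the function $v=u+kt$ lies in $S_0(\Y_T)$ and satisfies $|D^-v|(x,t)=f(x)+k$ on $\Y_T$. Here $H(x,p)=p-f(x)$, so that $H^a(x,p)=p+a-f(x)$ and $H_a(x,p)=\max\{p-a,0\}-f(x)$; note the coercivity \eqref{H-cocercive}, which fails for the eikonal, plays no role in the proof of Proposition~\ref{semi-eqiuv_1}. For the supersolution property, given $\psi=\psi_1+\psi_2\in\overline{\mathcal C}$ with $u-\psi$ attaining a local minimum at $z_0=(x_0,t_0)$, I would feed into the minimality inequality a maximizing sequence $(y_j,t_0-\delta_j)$ for $|D^-v|(z_0)$; since \eqref{def D-} normalizes by $\delta_j$ and enforces $d(x_0,y_j)\leq\delta_j$, the spatial difference quotient carries a factor $d(x_0,y_j)/\delta_j\leq 1$, which is exactly what converts the bound into $\partial_t\psi(z_0)+|\nabla\psi_1|(z_0)+|\nabla\psi_2|^*(z_0)-f(x_0)\geq 0$, i.e. $\partial_t\psi(z_0)+H^{|\nabla\psi_2|^*(z_0)}(x_0,|\nabla\psi_1|(z_0))\geq 0$. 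For the subsolution property I would split into the case $|\nabla\psi_1|(z_0)\leq|\nabla\psi_2|^*(z_0)$, handled by the purely temporal variation $x=x_0$ together with $\partial_t\psi(z_0)\leq|D^-v|(z_0)-k$, and the case $|\nabla\psi_1|(z_0)>|\nabla\psi_2|^*(z_0)$, in which I would construct test points $(x_j,t_j)$ with $d(x_0,x_j)=t_0-t_j$ (speed exactly one) along a sequence realizing $|\nabla^-\psi_1|(z_0)=|\nabla\psi_1|(z_0)$, exactly as in Proposition~\ref{semi-eqiuv_1} but with the degenerate threshold $p^*=0$.

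For the reverse direction, I would first record that a Monge solution exists. The value function $u_M$ defined by \eqref{eq value_f0} is bounded, and since here $f=f(x)$ is independent of time, the $t$-translation terms in the estimate of Proposition~\ref{prop bound-lip} vanish, so $u_M(\cdot,t)$ is Lipschitz uniformly in $t$ and $u_M$ is Lipschitz in time even when $f$ is only bounded and continuous; hence $u_M\in\lip(\X\times[0,T))\subset S(\Y_T)$. The verification in Proposition~\ref{prop existence_2} that $|D^-v|=f+k$ uses only the continuity of $f$ together with the dynamic programming principle, so it applies and shows $u_M$ is a Monge solution of \eqref{eq special0} with $u_M\to u_0$ in the sense of \eqref{initial-conti0} (cf. Remark~\ref{special_cs_Monge} and Theorem~\ref{thm:equiv-eikonal1}). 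By the forward direction just proved, $u_M$ is also a slope-based viscosity solution. Now let $u\in C(\X\times[0,T))$ be any bounded slope-based viscosity solution satisfying \eqref{initial-conti0}. Applying Theorem~\ref{thm:comparison-eikonal} with $u$ as subsolution and $u_M$ as supersolution, and then with the roles exchanged, gives $u\equiv u_M$; hence $u$ is the Monge solution, and the equivalence follows.

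The main obstacle I expect lies in the forward implication, and within it the subsolution case: one must organize the construction of space-time test points so that the non-coercive, degree-one homogeneous eikonal Hamiltonian reproduces precisely the extended Hamiltonian $H_{|\nabla\psi_2|^*}$ of Definition~\ref{def metric special}. The delicate points are the bookkeeping of the $|\nabla\psi_2|^*$ correction through the upper semicontinuous envelope, and ensuring that the maximizing sequence for $|D^-v|$ and the realizing sequence for $|\nabla\psi_1|$ can both be taken with the speed-one scaling $d(x_0,\cdot)/\delta\to 1$ that matches the finite-speed support of the eikonal Lagrangian. By contrast, the reverse implication is essentially a formal consequence of existence plus comparison, once the forward direction and the time-independence of $f$ (which rescues membership in $S(\Y_T)$ for merely continuous $f$) are in hand.
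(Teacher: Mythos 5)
Your proposal is correct and follows essentially the same route as the paper: the forward implication is obtained by rerunning Proposition~\ref{semi-eqiuv_1} for the eikonal Hamiltonian with $|D^-v|$ in place of $|D_L^-v|$ (the paper explicitly notes that the coercivity \eqref{H-cocercive} is never used there), and the reverse implication follows from the existence of the Monge solution \eqref{eq value_f0} (valid since time-independence of $f$ makes \eqref{lip-f-time} hold trivially) combined with the slope-based comparison principle of Theorem~\ref{thm:comparison-eikonal}. Your added details --- the identification $p^*=0$, the speed-one scaling $d(x_0,x_j)=t_0-t_j$, and the factor $d(x_0,y_j)/\delta_j\leq 1$ in the supersolution case --- are accurate instantiations of the paper's general argument.
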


Under the same assumptions, we conclude the equivalence of curve-based, slope-based, and Monge solutions to the initial value problem for \eqref{eq special0} by combining the results in Theorem~\ref{thm:equiv-eikonal1} and Theorem~\ref{thm:equiv-eikonal2}. 
It is of our interest for future work to consider the local equivalence between these notions of solutions to the equation without relying on the initial value.

\bibliographystyle{abbrv}

\end{document}